\documentclass[11pt,letterpaper]{amsart}

\pdfoutput=1

\usepackage[T1]{fontenc}
\usepackage{amsmath,amssymb,amsthm}
\usepackage[margin=2.75cm]{geometry}
\usepackage{microtype}
\usepackage[hidelinks]{hyperref}
\usepackage{xcolor}

\allowdisplaybreaks
\newtheorem{theorem}{Theorem}[section]
\newtheorem{lemma}[theorem]{Lemma}
\newtheorem{proposition}[theorem]{Proposition}
\newtheorem{corollary}[theorem]{Corollary}
\theoremstyle{remark}
\newtheorem{remark}[theorem]{Remark}
\newtheorem*{unnumberedremark}{Remark}
\newtheorem{example}[theorem]{Example}

\theoremstyle{plain}
\newtheorem{headthm}{Theorem}

\numberwithin{equation}{section}

\DeclareMathOperator{\Proj}{Proj}
\DeclareMathOperator{\Sym}{Sym}
\DeclareMathOperator{\vol}{vol}
\DeclareMathOperator{\spann}{span}

\newcommand{\m}{\mathfrak m}
\newcommand{\OO}{\mathcal O}
\newcommand{\F}{\mathcal F}
\newcommand{\Kcal}{\mathcal K}
\newcommand{\Mcal}{\mathcal M}
\newcommand{\Ical}{\mathcal I}

\newcommand{\PP}{\mathbb P}
\newcommand{\QQ}{\mathbb Q}
\newcommand{\RR}{\mathbb R}
\newcommand{\ZZ}{\mathbb Z}

\renewcommand{\leq}{\leqslant}
\renewcommand{\geq}{\geqslant}

\begin{document}

\title[Transcendental Hilbert--Kunz Multiplicities]{Transcendental Hilbert--Kunz Multiplicities}

\author[Sudipta Das]{Sudipta Das}
\address{School of Mathematics, Tata Institute of Fundamental Research, Dr. Homi Bhabha Road, Colaba, Mumbai 400005, India}
\email{sudiptad@math.tifr.res.in}

\author[Stephen Landsittel]{Stephen Landsittel}
\address{Institute of Mathematics, Hebrew University of Jerusalem, Givat Ram, Jerusalem 91904, Israel}
\email{stephen.landsittel@mail.huji.ac.il}

\author[Vinh Anh Ph{\d{A}}m]{Vinh Anh Ph{\d{A}}m}
\address{Department of Mathematics, Tulane University, 6823 St. Charles Avenue, New Orleans, LA 70118, USA}
\email{vpham1@tulane.edu}

\dedicatory{Dedicated to Professor Keiichi Watanabe with gratitude and admiration.}

\begin{abstract}
We prove that ordinary Hilbert--Kunz multiplicity can be transcendental.
More precisely, over every uncountable algebraically closed field of
characteristic $p>2$, there exist a normal standard graded domain $S$
and an $S_+$-primary homogeneous ideal $I\subseteq S$ such that
$e_{\rm HK}(IS_{S_+})$ is transcendental.
\end{abstract}

\subjclass[2020]{Primary 13D40; Secondary 13A35, 14J28}
\keywords{Hilbert--Kunz multiplicity, generalized Hilbert--Kunz multiplicity, Enriques surface, divisor volume, transcendence}

\maketitle

\section{Introduction}

Which real numbers occur as Hilbert--Kunz multiplicities?  Even the existence of a transcendental value has remained open.  We prove that such values occur.

Let $(R,\mathfrak m)$ be a Noetherian local ring of characteristic $p>0$, and let $I\subseteq R$ be an $\mathfrak m$-primary ideal.  For $Q=p^e$, the $Q$th Frobenius power of $I=(f_1,\ldots,f_s)$ is
$$
        I^{[Q]}=(f_1^Q,\ldots,f_s^Q).
$$
Monsky proved that the normalized lengths
$$
        \frac{\lambda_R(R/I^{[Q]})}{Q^{\dim R}}
$$
converge as $Q\to\infty$ \cite{Monsky83}.  Their limit is the Hilbert--Kunz multiplicity
$$
        e_{\rm HK}(I)
        =
        \lim_{e\to\infty}
        \frac{\lambda_R(R/I^{[p^e]})}{p^{e\dim R}}.
$$
Hilbert--Kunz multiplicity is a numerical invariant of singularities in prime characteristic.  It detects regularity in the unmixed case \cite{WY00}, is closely related to tight closure, $F$-singularities, and $F$-signature, and has geometric interpretations involving vector bundles, Frobenius pullbacks, Harder--Narasimhan filtrations, and density functions \cite{Brenner04,Trivedi18}.

Rationality is known in several important settings.  Brenner proved that the Hilbert--Kunz multiplicity of a homogeneous $R_+$-primary ideal is rational in a two-dimensional graded domain of finite type over an algebraically closed field \cite{Brenner04}.  Monsky later proposed a likely counterexample to rationality \cite{Monsky08} and showed, conditional on precise conjectures for colengths associated with a nodal cubic in characteristic two, that algebraic irrational and transcendental values should occur \cite{Monsky09Alg,Monsky09Trans}.  Brenner constructed the first unconditional irrational example in \cite{Bre13}.  A related transcendence result for epsilon multiplicity was recently obtained from divisor volumes \cite{DasLandsittelPhamEpsilon}.  The corresponding question for ordinary Hilbert--Kunz multiplicity remained open.

\begin{headthm}\label{thm:intro-HK}
Let $k$ be an uncountable algebraically closed field of characteristic $p>2$.  There exists a normal standard graded $k$-domain $S$ and an $S_+$-primary homogeneous ideal $I\subseteq S$ such that
$e_{\rm HK}(IS_{S_+})$
is transcendental.
\end{headthm}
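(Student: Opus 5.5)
The plan is to reduce the Hilbert--Kunz multiplicity of a suitable ideal to a volume of a divisor on a surface, and then use a transcendental volume coming from an Enriques surface, as in the epsilon multiplicity construction of \cite{DasLandsittelPhamEpsilon}. The key input is that on certain surfaces (Enriques surfaces over an uncountable field, where the nef cone can have infinitely many boundary rays and Zariski decompositions behave in a piecewise but non-polyhedral way) there exist real, or suitably chosen rational-family, divisor classes whose volume is a transcendental number. Ordinary Hilbert--Kunz multiplicity, however, is defined for ideals generated by finitely many elements. So we must realize a volume function evaluated at an \emph{integral} datum as an HK multiplicity. The natural mechanism is the following. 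Take $X$ a smooth projective Enriques-type surface, $A$ very ample, and a vector bundle or a direct sum of line bundles attached to generators. Then use the standard formula, valid for standard graded rings, expressing $e_{\rm HK}$ through asymptotic cohomology of Frobenius pullbacks of the syzygy bundle. For generators given by a section of $\OO(d_i)$ of a ring $S$ that itself is a section ring over a higher-dimensional base (a projective bundle $\PP(\mathcal E)$ over $X$, or a cone over $X\times$ something), the limit of $h^0(F^{e*}\mathrm{Syz}(m))/p^{e\dim}$ becomes a volume-type integral $\int \vol(\cdot)$ over a parameter, in the spirit of density functions \cite{Trivedi18}.

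First, I would construct $S$ as the section ring of an ample line bundle on $Y=\PP_X(\OO\oplus\OO(D))$ (or on $X\times\PP^1$ with a twist), normal by projective normality after replacing the ample bundle by a high multiple. Second, I would choose $I$ generated by monomial-like elements in the fiber coordinates times sections pulled back from $X$. Then $I^{[Q]}$ decomposes along the $\mathbb G_m$-grading of the fibers, and $\lambda(S/I^{[Q]})$ becomes a sum over fiber degrees $j$ of dimensions $h^0(X, mA + jD) - \dim(\text{image})$. Normalizing and letting $Q\to\infty$, Riemann sum arguments (using continuity of $\vol$ on the big cone) give $e_{\rm HK}(I)$ as an explicit integral $\int_0^1 \vol_X(a(t)A+b(t)D)\,dt$ minus polynomial terms with rational coefficients. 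Third, the volume function on the relevant two-dimensional slice of $N^1(X)_\RR$ is piecewise quadratic with infinitely many pieces accumulating (from the infinite order automorphisms/$(-2)$-curve reflections of an Enriques surface). I would then compute the integral as an infinite series of rational numbers, with breakpoints given by a recursive sequence.

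Finally, I would show that the resulting series is transcendental. It should look like a lacunary or Liouville-type sum, for instance $\sum c_n/\alpha_n$ with rapidly growing $\alpha_n$, in which case a Liouville approximation argument applies. Alternatively, one can choose the parameters via uncountability of $k$, so that countably many algebraic values are avoided; here the uncountable field is presumably used to guarantee Enriques surfaces with the required very general behavior (no extra $(-2)$-curves), and $p>2$ to avoid supersingular/classical issues. I expect the main obstacle to be the middle step. One must control the image of the multiplication maps (the Frobenius-power relations), not just $h^0$, so that $\lambda(S/I^{[Q]})$ is computed \emph{exactly} by volumes up to $o(Q^{\dim S})$. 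This requires a vanishing or surjectivity statement for $H^1$ on the twisted pieces, uniformly in $Q$, and Fujita-type vanishing for the relevant big divisors. Without such a statement, the syzygy contributions could introduce terms that are not volumes.
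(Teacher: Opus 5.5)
There is a genuine gap at the transcendence step, and neither of the mechanisms you offer gives a proof.

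\emph{Uncountability.} The uncountability of $k$ cannot be used to avoid countably many algebraic values. The integral you write down depends only on numerical data: every Enriques surface has the same rank-ten lattice $\operatorname{Num}(Y)$, and the twisting data are integers. So varying the surface does not make the value vary, and a cardinality argument forces nothing.

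\emph{Liouville.} This route is unsupported. Chamber contributions from $(-2)$-curves would be algebraic numbers produced by reflections acting on an integral lattice. You give no estimate showing that the partial sums approximate the limit fast enough for Liouville's criterion, and the geometric convergence such group actions typically produce is far from enough.

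\emph{What is actually needed.} The paper uses the opposite geometry. It takes an \emph{unnodal} Enriques surface, so $\operatorname{Nef}(Y)=\overline{\operatorname{Eff}}(Y)$ is the round positive cone and $\vol_Y$ is just the positive part of the intersection form. A logarithm then arises only when the conic boundary of this cone cuts a genuinely two-dimensional parameter region. That is why the paper uses three genus one pencils with $F_i^2=0$, $F_i\cdot F_j=4$, and the rank-three bundle $\PP_Y(\OO_Y(F_0)\oplus\OO_Y(F_1)\oplus\OO_Y(F_2))$: its centered exponent simplex is cut by a disk. The integral is then an algebraic number plus a nonzero algebraic multiple of the logarithm of a positive algebraic number, and Baker's theorem gives transcendence. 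With your rank-two bundle $\PP_X(\OO\oplus\OO(D))$, the classes $a(t)A+b(t)D$ lie in a plane. On an unnodal surface the positive cone meets that plane along two rays with algebraic slopes, so integrating the piecewise quadratic volume gives an algebraic number. The paper notes this obstruction explicitly for two pencils.

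\emph{Multiplication maps.} The second gap is the one you flag, but it is more than a missing vanishing theorem. For an $S_+$-primary $I$, $\lambda(S/I^{[Q]})$ is governed in every degree by the images of the Frobenius multiplication maps. The corrections are not \emph{polynomial terms with rational coefficients}. Already in Example~\ref{ex:veronese-lower-upper} the multiplication cokernels have the same order as the volume term. In the Enriques construction they carry their own logarithm, from $h^1(Y,\OO_Y(D_{\beta,a}-2QF_i))$ when this kernel divisor leaves the nef cone, and a priori it could cancel the volume logarithm.

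\emph{How the paper avoids this.} It sidesteps the primary case. It takes the non-primary height-one ideal $J_N=S_N\cdot s_{D_N}W$, whose Frobenius saturation defect splits exactly into two ranges. In degrees $m<Q$ one has $(J_N^{[Q]})_m=0$, so the defect is $h^0(X,mL_N-QD_N)$ and sums to a volume integral. In degrees $m\geq Q$ the defect is the cokernel $C_{Q,a}$, computed blockwise from Frobenius pullbacks of the pencil sequences on $Y$. Both logarithmic parts are shown to be positive, so $e_{\rm gHK}(J_N)$ is transcendental. Only then does Vraciu's reduction give $e_{\rm gHK}(JR)=\sum_i a_ie_{\rm HK}(I_i)$, with integers $a_i$ and fixed $S_+$-primary homogeneous ideals $I_i$, so one of these $I_i$ has transcendental Hilbert--Kunz multiplicity.

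That reduction, not the choice of surface, is where uncountability of $k$ enters: homogeneous prime avoidance for auxiliary elements chosen independently of $Q$, with LC bounds from linear Frobenius--Serre vanishing. Your proposal has neither the generalized Hilbert--Kunz detour nor the sign analysis of the upper range, so as it stands it does not reach the theorem.
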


To our knowledge, Theorem~\ref{thm:intro-HK} gives the first unconditional example of a transcendental ordinary Hilbert--Kunz multiplicity.  The proof separates the Frobenius count into two ranges.  When $m<Q$, the Frobenius powers of the generators do not
contribute, and the relevant terms lead to a logarithmic integral of divisor volumes. The range $m\geq Q$ cannot be ignored. There the generators do contribute, and the remaining terms are cokernels of
multiplication maps. A priori, the logarithms coming from this second range could cancel the one obtained for $m<Q$.  We compute these cokernels and prove that the two logarithmic contributions have the
same sign.

\subsection{A transcendental generalized Hilbert--Kunz multiplicity}

We first work with generalized Hilbert--Kunz multiplicity.  For an ideal $J$ in a local ring $(R,\mathfrak m)$, set
$$
        e_{\rm gHK}(J)
        =
        \lim_{e\to\infty}
        \frac{\lambda_R\bigl(H^0_{\mathfrak m}(R/J^{[p^e]})\bigr)}{p^{e\dim R}},
$$
whenever the limit exists.  This agrees with ordinary Hilbert--Kunz multiplicity when $J$ is $\mathfrak m$-primary.  In general it measures the finite length defect between a Frobenius power and its saturation.  The underlying function appears already in work of Aberbach \cite{AberbachFSignature}, and the invariant is a special case of the relative Hilbert--Kunz multiplicities considered by Epstein--Yao \cite{EpsteinYao}.  Dao--Smirnov developed its existence and homological theory \cite{DaoSmirnov}.  Dao--Watanabe computed it for several classes of rings \cite{DaoWatanabe}, Brenner--Caminata treated the two dimensional graded case \cite{BrennerCaminata}, and Asgharzadeh studied its relation with the LC condition and further curve cases \cite{AsgharzadehLC}.  Vraciu proved the reduction result used in Section~\ref{sec:ordinary-HK} \cite{Vraciu}.  A broader treatment for $p$-families of ideals appears in \cite{LandsittelDasFamilies}.

\begin{headthm}\label{thm:intro-ghk}
Let $k$ be an uncountable algebraically closed field of characteristic $p>2$.  There exists a normal standard graded $k$-domain $S$ and a homogeneous ideal $J\subseteq S$ such that
$e_{\rm gHK}(JS_{S_+})$
is transcendental.
\end{headthm}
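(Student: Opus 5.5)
The plan is to realize $e_{\rm gHK}$ as an integral of divisor volumes on a surface whose volume function is transcendental along a line. Take $X$ a smooth projective surface over $k$ whose pseudoeffective cone is not polyhedral and whose volume function has a non-algebraic restriction to a suitable segment. A very general Enriques-type surface is the natural source; this is where uncountability of $k$ and $p>2$ enter, as in \cite{DasLandsittelPhamEpsilon}. Fix ample divisors $A$ and $B$ on $X$ and embed $X$ by a high multiple of $A$, so that $X=\Proj S$ with $S$ the normal section ring $\bigoplus_m H^0(X,\OO_X(mA))$, which is standard graded after replacing $A$ by a multiple. Let $J\subseteq S$ be the homogeneous ideal generated by sections of $\OO_X(dA-B)$, embedded into $S$ via a fixed effective divisor, so that $J$ defines a curve-like locus and the saturation of $J^{[Q]}$ in degree $m$ is $H^0(X,\OO_X(mA-QD))$ for a fixed effective divisor $D$.

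The first step is a cohomological formula for $\lambda(H^0_{S_+}(S/J^{[Q]}))$. In each degree $m$ we have $H^0_{S_+}(S/J^{[Q]})_m=(J^{[Q]})^{\rm sat}_m/J^{[Q]}_m$. When $m<Qd$ the generators of $J^{[Q]}$ vanish in degree $m$, so this piece is all of $H^0(X,\OO_X(mA-QD))$. Summing over $m$ and normalizing by $Q^3$, Riemann--Roch asymptotics (Fujita approximation / continuity of volume) give the integral
$$
\int_0^{d}\frac{\vol(tA-D)}{2}\,dt .
$$
The range $m\geq Qd$ gives the cokernel of the multiplication map
$$
J_d^{[Q]}\otimes H^0(\OO_X((m-Qd)A))\to H^0(\OO_X(mA-QD)).
$$
I would control it by Castelnuovo--Mumford regularity of Frobenius pushforwards, following Dao--Smirnov \cite{DaoSmirnov}. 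This should show that the cokernel vanishes for $m\geq Q(d+c)$ with $c$ fixed, and that on the intermediate range it is again an explicit volume-type expression, namely a difference of $h^0$'s computed by Koszul-type exact sequences of the Frobenius-pulled-back syzygy bundle.

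The second step is to evaluate these integrals explicitly. On the Enriques surface the volume function on the segment $tA-D$ is piecewise given by Zariski decomposition. Because the segment crosses infinitely many chambers accumulating at the boundary of the big cone, $\vol(tA-D)$ is a rational function on each of infinitely many intervals. Integrating then produces an infinite sum of rational terms, plus a logarithm of an algebraic number arising from integrating $1/(\text{linear})$ pieces, exactly as in the epsilon-multiplicity paper. So $e_{\rm gHK}$ has the form $\alpha+\beta\log\gamma$ with $\alpha,\beta,\gamma$ algebraic, $\beta\neq0$, and $\gamma\neq 0,1$. Baker's theorem (or simply Lindemann--Weierstrass for $\log$ of algebraic $\gamma\neq1$) then gives transcendence.

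The main obstacle is the range $m\geq Qd$. The cokernel contributions could contribute their own logarithm with opposite sign and cancel the one coming from $m<Qd$. I would first try to choose $J$ generated by a full linear system such as $H^0(\OO_X(dA-D))$ with $dA-D$ very ample, so that the multiplication maps become surjective quickly and the cokernel term is bounded by a computable volume integral over $[d,d+c]$. I would then check directly, via the same Zariski chamber computation, that its logarithmic coefficient has the same sign as $\beta$, so the total coefficient is nonzero.
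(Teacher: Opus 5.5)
\textbf{There is a genuine gap: the logarithm you rely on does not arise on a surface.}

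On a smooth projective surface, within a Zariski chamber the positive part $P_t$ of $tA-D$ depends affinely on $t$, because its negative-part coefficients solve an integral linear system. Hence $\vol(tA-D)=P_t^2$ is a quadratic polynomial in $t$, and the walls $P_t\cdot C=0$ lie at rational $t$. There are no $1/(\text{linear})$ integrands anywhere.

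\begin{itemize}
\item For the natural choice, an unnodal Enriques surface, there are no negative curves at all. So $\operatorname{Nef}=\overline{\operatorname{Eff}}$ is the round positive cone, and $\vol(tA-D)=(tA-D)^2$ beyond a threshold $\tau$ that is a root of a rational quadratic. Therefore $\int_\tau^d\vol(tA-D)\,dt$ is algebraic.
\item For a nodal surface with infinitely many chambers you get a convergent series of rationals. Nothing in your argument identifies that series with $\alpha+\beta\log\gamma$.
\end{itemize}

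The paper obtains its logarithm from two extra dimensions of integration. It takes the fourfold $\PP_Y(E)$, with $E=\OO_Y(F_0)\oplus\OO_Y(F_1)\oplus\OO_Y(F_2)$, over an unnodal Enriques surface $Y$ carrying three genus-one pencils with $F_i^2=0$ and $F_i\cdot F_j=4$. The projective bundle volume formula writes the volume of $tL_N-D_N$ as an integral of $\vol_Y$ over a triangle of coefficients. The positive cone cuts this triangle in a disk centred at its barycenter, and the radial integration produces $b_N\log\alpha_N$ with $b_N>0$. The paper notes that with only two pencils the parameter space is an interval and the value is again algebraic.

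\textbf{Your handling of the range $m\geq Qd$ is also not an argument.} The relevant map on your surface is $W^{(Q)}\otimes H^0((m-Qd)A)\to H^0(mA-QD)$. It uses only $Q$th powers of the generators, so very ampleness of $dA-D$ and fast surjectivity of ordinary multiplication give no control over it.

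\begin{itemize}
\item Its total cokernel can have the same order $Q^{\dim S}$ as the lower range. In the paper's $\PP^1$ Veronese example the two ranges contribute equally.
\item An upper bound by a volume integral cannot determine the logarithmic coefficient or its sign, and that is exactly what excluding cancellation requires.
\item For a general linear system, the leading term is governed by $H^1$ of Frobenius pullbacks of a higher-rank syzygy bundle, which you have no means to compute.
\end{itemize}

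The paper makes this computable by design. Its generators come from $W=\bigoplus_iH^0(Y,F_i)$, and $(w_0+w_1+w_2)^Q=w_0^Q+w_1^Q+w_2^Q$ makes the cokernel block diagonal over the monomial summands of $\Sym^{Q+a}E$. Each block is controlled by the Frobenius pullback of a single pencil sequence. Its leading term is $h^1(Y,\OO_Y(D_{\beta,a}-2QF_i))=\tfrac12\max\{-(D_{\beta,a}-2QF_i)^2,0\}+o(Q^2)$, by Riemann--Roch on the unnodal surface. A logarithm appears only when exactly one exponent is at least $Q$ and $a<Q/N$. It is the logarithmic part of the integral $I_N$, which the paper proves is positive, so it reinforces the lower logarithm rather than cancelling it.

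Since the two logarithms have different arguments, Baker's theorem is needed to finish; Hermite--Lindemann is not enough. Also, uncountability of $k$ plays no role in this part of the paper; it enters only through homogeneous prime avoidance in the passage to ordinary Hilbert--Kunz multiplicity.
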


The construction starts with an unnodal Enriques surface $Y$ carrying three base point free genus one pencils $F_0,F_1,F_2$ satisfying
$$
        F_i^2=0,
        \qquad
        F_i\cdot F_j=4
        \quad(i\neq j).
$$
On the real span of these classes, the nef and pseudo-effective cones are the same closed positive cone.  The volume of a class in this span is therefore its self-intersection inside the cone and is zero outside it. Since $F_i^2=0$ and $F_i\cdot F_j=4$ for $i\neq j$, the self-intersection of
$c_0F_0+c_1F_1+c_2F_2$ is $8(c_0c_1+c_0c_2+c_1c_2).$
Because the volume on their span is the positive part of the self-intersection, it is given by an explicit quadratic function of the coefficients.

Consider the split projective bundle
$$
        \pi:X=\PP_Y\bigl(\OO_Y(F_0)\oplus\OO_Y(F_1)\oplus\OO_Y(F_2)\bigr)
        \longrightarrow Y,
$$
and write $\xi=\OO_X(1)$.
Symmetric powers of the split bundle decompose into line bundles indexed by triples of nonnegative integers.  After normalization, these triples fill a triangle, and sums of dimensions of global sections become Riemann sums for an integral of divisor volumes on $Y$. Write $c_i=r+x_i$, where $x_0+x_1+x_2=0$.  This translates the simplex of normalized coefficients so that its barycenter is the origin.  In these coordinates, the condition that $\sum_i c_iF_i$ lie in the positive cone becomes $x_0^2+x_1^2+x_2^2\leq 6r^2.$
Thus the positive cone cuts the simplex of normalized coefficients by a disk centered at its barycenter.  The integral then evaluates to an algebraic number plus a nonzero algebraic multiple of the logarithm of a positive algebraic number.

Set $S_Y=F_0+F_1+F_2$.  For a sufficiently large integer $N$, put
$$
D_N=N\pi^*S_Y,
\qquad
L_N=\xi+D_N,
$$
and define
$$
S_N:=R(X,L_N)=\bigoplus_{m\geq0}H^0(X,mL_N).
$$
Then $L_N-D_N=\xi$.  Let
$$
W:=H^0(X,\OO_X(1))
   =\bigoplus_{i=0}^2 H^0(Y,\OO_Y(F_i)),
$$
so that $\dim_k W=6$.  Multiplication by the canonical section
$s_{D_N}$ of $D_N$ gives an inclusion
$$
W=H^0(X,L_N-D_N)\longrightarrow H^0(X,L_N)=(S_N)_1.
$$
If $\widehat W:=s_{D_N}W$, we define
$$
J_N:=S_N\widehat W\subseteq S_N.
$$
Thus $J_N$ is generated by six elements of
$(S_N)_1=H^0(X,L_N)$.  Each generator has the form
$s_{D_N}w$ with $w\in W$, so their zero divisors all contain $D_N$. Since $W$ is base point free, $D_N$ is their only common fixed component.  For $Q=p^e$, the saturation of $J_N^{[Q]}$ in $S_N$ is the divisorial ideal associated with $QD_N$. Since $J_N^{[Q]}$ is generated in degree $Q$, its degree $m$ part vanishes for $m<Q$.  Hence, for $m<Q$, the degree $m$ component of the saturation quotient is
$$
H^0(X,mL_N-QD_N).
$$
The normalized sum over $m<Q$ is therefore the volume integral described above.

In degree $Q+a$, Subsection~\ref{subsec:split-pencil-bundle} identifies the corresponding component of the saturation quotient with the cokernel $C_{Q,a}$ of multiplication by the $Q$th powers of the pencil sections.  After pushing this map to $Y$, the splitting of $E$ decomposes $C_{Q,a}$ into blocks.  Lemmas~\ref{lem:upper-cokernel-bookkeeping} and~\ref{lem:uniform-enriques-pencil-cohomology} show that every possible term of order $Q^2$ is governed by a group
$$
H^1\bigl(Y,\OO_Y(D_{\beta,a}-2QF_i)\bigr).
$$
The proof of Proposition~\ref{prop:upper-leading-log-small} then divides the normalized exponent space into regions.  The blocks indexed by triples with $\beta_i<Q$ for every $i$ contribute only algebraic terms; see Lemma~\ref{lem:upper-cokernel-bookkeeping} and the first case in the proof of Proposition~\ref{prop:upper-leading-log-small}.  In the regions where the kernel divisor remains nef, the nontrivial blocks have total contribution $o(Q^5)$.  A logarithm arises only when exactly one exponent $\beta_i$ is at least $Q$ and the corresponding divisor $D_{\beta,a}-2QF_i$ leaves the nef cone.  Lemma~\ref{lem:critical-integral-arithmetic} shows that this logarithmic contribution is strictly positive.

Propositions~\ref{prop:exact-lower-log} and
\ref{prop:upper-leading-log-small} show that, for every fixed sufficiently large $N$, the lower and upper logarithmic contributions are both positive. They therefore cannot cancel.  Their respective orders as
$N\to\infty$ are $N^{-1}$ and $N^{-2}$; these estimates compare their sizes but are not used to prove nonvanishing.  Theorem~ \ref{thm:enriques-gHK-criterion} then writes the resulting generalized
Hilbert--Kunz multiplicity as an algebraic number plus a nonzero algebraic linear combination of logarithms of positive algebraic numbers. Therefore Corollary~\ref{cor:Baker-linear-form} gives Theorem~\ref{thm:intro-ghk}.

\subsection{From generalized to ordinary Hilbert--Kunz multiplicity}

To deduce Theorem~\ref{thm:intro-HK} from Theorem~\ref{thm:intro-ghk}, we use Vraciu's reduction \cite{Vraciu}.  The colon ideals that occur in the induction depend on $Q$.  Lemma~\ref{lem:fixed-homogeneous-LC} gives the required linear LC bounds, while Lemmas~\ref{lem:homogeneous-localization} and~\ref{lem:homogeneous-countable-prime-avoidance} allow the auxiliary elements to be chosen homogeneously and independently of $Q$.  Proposition~\ref{prop:vraciu-reduction-enriques} produces finitely many fixed $\mathfrak m$-primary ideals, and Lemma~\ref{lem:homogeneous-contraction-HK} shows that they come from $S_+$-primary homogeneous ideals $I_1,\ldots,I_t\subseteq S$.  Thus there are integers $a_1,\ldots,a_t$ such that
$$
        e_{\rm gHK}(JS_{S_+})
        =
        \sum_{i=1}^t a_i e_{\rm HK}(I_iS_{S_+}).
$$
Since a finite integer linear combination of algebraic numbers is algebraic, the transcendence of the left side forces at least one Hilbert--Kunz multiplicity on the right to be transcendental.

Odd characteristic provides the classical unnodal Enriques geometry used in the construction.  Algebraic closure supplies the required surface and connected pencil fibers, while perfection enters the Frobenius arguments on smooth varieties.  Uncountability is used only for the simultaneous homogeneous prime avoidance in Section~\ref{sec:ordinary-HK}.  The integer $N$ is chosen sufficiently large so that $L_N$ is ample and the full section ring $R(X,L_N)$ is a normal standard graded domain.  It is then held fixed while $Q=p^e$ tends to infinity.  The later expansion as $N\to\infty$ compares two already positive logarithmic terms and does not interchange these limits.

Sections~\ref{prelim} and~\ref{sec:geometric-input} establish the graded and geometric preliminaries.  Section~\ref{sec:enriques-gHK} proves Theorem~\ref{thm:intro-ghk} by the Enriques construction, and Section~\ref{sec:ordinary-HK} proves Theorem~\ref{thm:intro-HK} through Vraciu's reduction.  Appendix~\ref{app:exact-log-computations} contains the exact evaluations of the two logarithmic terms.

\section{Preliminaries}\label{prelim}

The proof uses three descriptions of the same asymptotic quantity, namely graded local cohomology, global sections on a projective variety, and integrals of divisor volumes. The statements below relate these
descriptions and provide two uniform estimates used later, one for section counts in compact families of divisor classes and one for cohomology after Frobenius pullback.

Throughout the paper, $k$ denotes a field, and varieties are integral and projective over $k$ unless stated otherwise.  We use additive notation for divisors and tensor powers of line bundles.  Thus $mL$ means $L^{\otimes m}$, while
$$
        mL-D=L^{\otimes m}\otimes\OO_X(-D).
$$
The section ring of $L$ is
$$
        R(X,L):=\bigoplus_{m\geq0}H^0(X,mL),
$$
and we write $H^0(X,D)$ for $H^0(X,\OO_X(D))$ when $D$ is a Cartier divisor.

\subsection{Length, Saturation, and the Graded Local Dictionary}\label{subsec:graded-length-saturation}

The geometric calculation is carried out in a standard graded ring, whereas Hilbert--Kunz multiplicity is defined after localization at the homogeneous maximal ideal.  Saturation identifies the finite length contribution that connects the two settings.

Let
$$
        S=\bigoplus_{m\geq 0}S_m
$$
be a standard graded domain over $k$, and assume $S_0=k$.  We write
$$
        S_+:=\bigoplus_{m>0}S_m
$$
for the irrelevant ideal.  If $J\subseteq S$ is a homogeneous ideal, its saturation with respect to $S_+$ is
$$
        J^{\rm sat}:=J:S_+^\infty=\bigcup_{i\geq 1}(J:S_+^i).
$$
The zeroth local cohomology module of $S/J$ with support in $S_+$ is defined as
$$
H^0_{S_+}(S/J)
=
\{\,\overline{x}\in S/J\mid S_+^i\overline{x}=0\text{ for some }i\geq 0\,\}.
$$
The standard identity
$H^0_{S_+}(S/J)\cong J^{\rm sat}/J$
holds. For finite length graded modules, localization at $S_+$ preserves length.

\begin{lemma}\label{lem:graded-local-length}
Let $S$ be a standard graded $k$-algebra with $S_0=k$, and let $N$ be a finite length graded $S$-module.  Then
$\lambda_S(N)=\lambda_{S_{S_+}}(N_{S_+})$.
In particular, if $J\subseteq S$ is homogeneous and $J^{\rm sat}/J$ has finite length, then its length can be computed either in the graded ring $S$ or after localizing at $S_+$.
\end{lemma}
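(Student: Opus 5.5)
The argument is by induction on $\lambda_S(N)$ along a graded composition series, reducing everything to the single simple module $k=S/S_+$. A finite length graded module $N$ over $S$ has a filtration by graded submodules
$$
0=N_0\subsetneq N_1\subsetneq\cdots\subsetneq N_\ell=N
$$
whose successive quotients are shifts $k(-d_j)$ of the residue field $S/S_+$. To build it, note that the socle $\{x\in N\mid S_+x=0\}$ is a nonzero graded submodule; for instance, a nonzero homogeneous element of top degree is killed by $S_+$. Pick a homogeneous socle element, quotient by the copy of $k(-d)$ it spans, and induct on the finite $k$-dimension $\dim_k N=\sum_m\dim_k N_m$. Because $S/S_+=k$ is a field, each quotient $k(-d_j)$ is a simple $S$-module. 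Hence this is a composition series and $\lambda_S(N)=\ell=\dim_k N$.

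Next, localize at the prime $S_+$; this is exact, so it carries the filtration to a filtration of $N_{S_+}$ with quotients $k(-d_j)_{S_+}$. For each such quotient, $S_+$ annihilates $k$ and every element outside $S_+$ acts invertibly on it: such an element has nonzero degree-zero component $c\in k^\times$, and it acts on $k$ through multiplication by $c$. Thus $k_{S_+}\cong k$, and this is the residue field of $S_{S_+}$, so it is simple. The localized filtration is therefore a composition series of length $\ell$, which gives $\lambda_{S_{S_+}}(N_{S_+})=\ell=\lambda_S(N)$.

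For the final sentence, apply this to $N=J^{\rm sat}/J$. This is a graded module because $J$ and $J^{\rm sat}$ are homogeneous, and it is of finite length by hypothesis. Localization commutes with the quotient, so its localization is $J^{\rm sat}S_{S_+}/JS_{S_+}$; one may also identify this with $H^0_{\m}(S_{S_+}/JS_{S_+})$, but that is not needed for the length statement.

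The only point needing care is the claim that finite length forces $N$ to be finite-dimensional over $k$ and concentrated in finitely many degrees. It follows because any composition series can be refined to a graded one whose simple factors are all shifts of $k$ (graded simple modules are of this form, since $S_0=k$). Everything else is routine.
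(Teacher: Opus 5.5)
Your proposal is correct and follows the paper's own proof: take a graded composition series whose factors are shifts $k(-d_j)$ of $S/S_+$, and use exactness of localization to obtain a composition series of $N_{S_+}$ whose factors are the residue field of $S_{S_+}$. You supply more detail than the paper, which simply asserts that the graded series exists; the cleanest justification, which avoids first assuming $\dim_k N<\infty$, is to take a maximal chain of graded submodules (its length is bounded by $\lambda_S(N)$), whose factors are graded-simple and hence of the form $k(-d)$.
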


\begin{proof}
Choose a finite graded composition series of $N$.  Its simple factors are degree shifts of $S/S_+$.  Localization at $S_+$ preserves the series and sends each factor to the residue field of $S_{S_+}$, so the two lengths are equal.
\end{proof}

Consequently, every finite length module appearing below may be measured either before or after localization at $S_+$.  We perform the geometric computations in the graded ring and localize only when stating the resulting Hilbert--Kunz invariants.

\subsection{Frobenius Powers, Saturation, and Generalized Hilbert--Kunz Multiplicity}\label{subsec:ghk-prelim}

For a homogeneous ideal, generalized Hilbert--Kunz multiplicity measures the sections recovered by saturation but missing from its Frobenius powers.  The graded formulation below identifies the degree range in which those sections are determined entirely by a divisor.

Let $R$ be a Noetherian local ring of prime characteristic $p>0$, with maximal ideal $\m$ and dimension $d$.  For $Q=p^e$ and an ideal $I=(f_1,\dots,f_s)$, the $Q$th Frobenius power of $I$ is
$$
        I^{[Q]}=(f_1^Q,\dots,f_s^Q).
$$
This definition is independent of the chosen generators.  For an arbitrary ideal the normalized sequence need not be known to converge.  We therefore write
$$
\begin{aligned}
e_{\rm gHK}^+(I)
&=
\limsup_{e\to\infty}
\frac{\lambda_R\bigl(H^0_{\m}(R/I^{[p^e]})\bigr)}{p^{ed}},\\
e_{\rm gHK}^-(I)
&=
\liminf_{e\to\infty}
\frac{\lambda_R\bigl(H^0_{\m}(R/I^{[p^e]})\bigr)}{p^{ed}}.
\end{aligned}
$$
When these two numbers agree, their common value is denoted by $e_{\rm gHK}(I)$.  For $\m$-primary ideals this limit exists and coincides with the usual Hilbert--Kunz multiplicity.  For ideals that are not $\m$-primary, it measures a Frobenius saturation defect and is substantially more subtle; see Dao--Smirnov \cite{DaoSmirnov} and Vraciu \cite[Definition~1.2]{Vraciu}.  The existence and structure of related Hilbert--Kunz limits are part of a broad circle of work in prime characteristic, including the existence of $F$-signature \cite{Tuc12} and examples showing that Hilbert--Kunz invariants need not be rational in general \cite{Bre13}.

We use the graded form below throughout the paper.  Let
$$
        S=\bigoplus_{m\geq 0}S_m
$$
be a standard graded domain over a field of characteristic $p>0$, let $S_+=\bigoplus_{m>0}S_m$, and let $J\subseteq S$ be homogeneous.  Analogously to saturation of ideals in local rings, we have
$$
        H^0_{S_+}(S/J^{[Q]})\cong (J^{[Q]})^{\rm sat}/J^{[Q]}.
$$
Lemma~\ref{lem:graded-local-length} therefore identifies $e_{\rm gHK}^{\pm}(JS_{S_+})$ with the limsup and liminf, respectively, of
$$
\frac{\lambda_S\bigl((J^{[Q]})^{\rm sat}/J^{[Q]}\bigr)}{Q^{\dim S}},
\qquad Q=p^e.
$$
Whenever the latter sequence converges, its limit is $e_{\rm gHK}(JS_{S_+})$.

The distinction from ordinary powers is important.  The ideal $J^{[Q]}$ contains only the $Q$th powers of a chosen set of generators; it need not contain all products of total degree $Q$.  Saturation restores the full sheaf generated by those Frobenius powers, and the difference between the two is the quantity that we shall compute.

When the generators come from a globally generated divisor, the saturated sheaf has the form below.

\begin{lemma}\label{lem:frobenius-saturation-by-degree}
Let $k$ be a field of characteristic $p>0$.  Let $X$ be a normal projective variety over $k$, let $L$ be an ample line bundle with standard graded section ring $S=R(X,L)$, let $D$ be an effective Cartier divisor, and fix $c\geq 1$.  Set
$$
        \Mcal=cL-D,
$$
and assume that $\Mcal$ is globally generated.  Let
$s_D\in H^0(X,\OO_X(D))$
be the canonical section.  Multiplication by $s_D$ gives an injective map
$$
H^0(X,\Mcal)=H^0(X,cL-D)
\longrightarrow
H^0(X,cL)=S_c,
\qquad
w\longmapsto s_Dw.
$$
Let $J\subseteq S$ be the homogeneous ideal generated by this image.  Then, for every $Q=p^e$,
$$
\widetilde{J^{[Q]}}=\OO_X(-QD)\subseteq\OO_X,
\qquad
(J^{[Q]})^{\rm sat}=I_{QD}:=\Gamma_*(\OO_X(-QD)),
$$
and
$(J^{[Q]})_m=0\qquad\text{for all }m<cQ$.
\end{lemma}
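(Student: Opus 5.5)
The plan is to compute everything on $X$ via the correspondence between homogeneous ideals of $S=R(X,L)$ and subsheaves of $\OO_X$, using $\widetilde{S(m)}=\OO_X(mL)$. The degree-vanishing claim is immediate. Each generator $g=s_Dw$ lies in $S_c$, so its Frobenius power $g^Q$ lies in $S_{cQ}$. Hence $J^{[Q]}$ is generated in degree $cQ$, and since $S$ is nonnegatively graded, $(J^{[Q]})_m=0$ for $m<cQ$. Injectivity of multiplication by $s_D$ holds because $X$ is integral and $s_D\neq 0$.

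For the sheaf, fix a basis $w_1,\dots,w_r$ of $H^0(X,\Mcal)$ and set $g_j=s_Dw_j$. Then $J^{[Q]}=(g_1^Q,\dots,g_r^Q)$, and $\widetilde{J^{[Q]}}$ is the image of
$$
\bigoplus_j \OO_X(-cQL)\xrightarrow{(g_j^Q)}\OO_X.
$$
Locally, trivialize $L$ and $\OO_X(D)$ on an open set $U$ where $s_D$ is given by a local equation $f$. Then $g_j^Q=f^Q w_j^Q$, so the image sheaf is $f^Q\cdot(w_1^Q,\dots,w_r^Q)\OO_U$. The key step, and the only real content, is that $(w_1^Q,\dots,w_r^Q)\OO_U=\OO_U$. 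This holds because global generation of $\Mcal$ means that at each point $x$ some $w_j$ does not vanish, so $w_j$ is a unit at $x$, and therefore so is $w_j^Q$. Frobenius powers of a generating set of the unit ideal still generate the unit ideal, since they have the same zero locus. Hence $\widetilde{J^{[Q]}}=f^Q\OO_U=\OO_X(-QD)|_U$, and these local identifications glue to give the first claim.

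For the saturation, use the standard fact that for a homogeneous ideal $I$ of a standard graded ring, $I^{\rm sat}=\Gamma_*(\widetilde I)\cap S$. Here $\widetilde I\subseteq\OO_X$, so $\Gamma_*(\widetilde I)=\bigoplus_m H^0(X,\widetilde I(mL))$ sits inside $\Gamma_*(\OO_X)$. To justify this identity, I would argue in two steps.
\begin{itemize}
\item If $s\in S_m$ satisfies $S_+^i s\subseteq I$, then on each basic open $D_+(h)$ with $h\in S_1$ the germ of $s$ lies in $I_{(h)}$, so $s$ is a section of $\widetilde I(mL)$.
\item Conversely, if $s\in H^0(X,\widetilde I(mL))\subseteq S_m$, then on each $D_+(h)$ some power $h^ns$ lies in $I$. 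Since $S_1$ is finite-dimensional, a uniform $n$ works, and so $S_+^{N}s\subseteq I$ for large $N$.
\end{itemize}
Normality of $X$ and ampleness of $L$, together with $S=R(X,L)$ being the full section ring, give $\Gamma_*(\OO_X)_{\geq0}=S$, so no intersection issue arises in nonnegative degrees. Applying this with $I=J^{[Q]}$ and $\widetilde{J^{[Q]}}=\OO_X(-QD)$ yields $(J^{[Q]})^{\rm sat}=\Gamma_*(\OO_X(-QD))=I_{QD}$.

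I expect the main obstacle to be the precise bookkeeping in the saturation identity $I^{\rm sat}=\Gamma_*(\widetilde I)$. In particular, one must check carefully the uniformity of the exponent $n$ in the second step, and that $\Gamma_*$ is taken in degrees $m\geq0$ inside $S$. The local unit argument itself is routine.
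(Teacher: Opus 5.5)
Your proposal is correct and follows the paper's proof step for step. Both show locally that the image ideal is $(f^Q)$, since the $Q$th powers of generators of the unit ideal still generate it, and then apply $(I:S_+^\infty)=\Gamma_*(\widetilde I)$, which the paper isolates as Lemma~\ref{lem:full-section-ring-saturation} and proves by the same two inclusions you sketch (with a basis $x_1,\dots,x_r$ of $S_1$ and $\sqrt{(x_1^n,\dots,x_r^n)}=S_+$ supplying the uniform exponent). One small remark: normality of $X$ plays no role in $\Gamma_*(\OO_X)_{\geq 0}=S$, which holds by definition because $S=R(X,L)$ is the full section ring.
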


\begin{proof}
Choose sections $w_1,\ldots,w_\ell$ that span
$H^0(X,\mathcal M)$, where $\mathcal M=cL-D$.  Since $\mathcal M$ is globally generated, these sections generate $\mathcal M$ as a line bundle.  The ideal $J$ is generated in degree $c$ by
$$
s_Dw_1,\ldots,s_Dw_\ell.
$$
Hence $J^{[Q]}$ is generated in degree $cQ$ by
$$
s_D^Qw_1^Q,\ldots,s_D^Qw_\ell^Q.
$$

The sections $w_1^Q,\ldots,w_\ell^Q$ generate
$\mathcal M^{\otimes Q}$.  Indeed, at every point of $X$, one of the $w_i$ is a local generator of $\mathcal M$, and its $Q$th tensor power is a local generator of $\mathcal M^{\otimes Q}$.

We compute the associated ideal sheaf locally.  Let $U\subseteq X$ be an open set on which $\mathcal M$ is trivial, and let $g$ be a local equation for $D$.  Write $w_i=f_i e$, where $e$ is a local generator
of $\mathcal M$.  Since the $w_i$ generate $\mathcal M$, the functions $f_i$ generate $\OO_X(U)$.  Therefore their $Q$th powers also generate $\OO_X(U)$.  On $U$, the image ideal generated by the sections
$s_D^Qw_i^Q$ is
$$
(g^Qf_1^Q,\ldots,g^Qf_\ell^Q) = (g^Q).
$$
This is the local ideal of $QD$.  Hence
$$
\widetilde{J^{[Q]}}=\OO_X(-QD).
$$

Lemma~\ref{lem:full-section-ring-saturation} now gives
$$
(J^{[Q]})^{\rm sat}
=\Gamma_*(\widetilde{J^{[Q]}})
=\Gamma_*(\OO_X(-QD))
=I_{QD}.
$$
Finally, $J^{[Q]}$ is generated in degree $cQ$, so
$(J^{[Q]})_m=0$ for every $m<cQ$.
\end{proof}

Let
$$
        \tau=\inf\{t\geq0\mid tL-D\text{ is pseudo-effective}\}.
$$
The lemma separates the saturation quotient into two qualitatively different degree ranges.  For
$$
        \lceil\tau Q\rceil\leq m<cQ,
$$
the Frobenius ideal has no elements of degree $m$, so the entire graded piece is the space $H^0(X,mL-QD)$.  These dimensions are governed asymptotically by divisor volumes.  In degrees $m\geq cQ$, the Frobenius generators are present, and the quotient measures their failure to generate all sections of $mL-QD$.  Section~\ref{sec:enriques-gHK} computes this second contribution from the actual multiplication map on global sections.

\subsection{Section Rings and Divisorial Ideals}\label{subsec:section-rings-divisorial-ideals}

Vanishing along a Cartier divisor defines a homogeneous ideal in its section ring.  Ordinary products of sections and the full space of sections of the corresponding multiple divisor define the same sheaf, although their graded modules may differ in finitely many degrees.

Let $X$ be a normal projective variety, and let $L$ be an ample line bundle such that the section ring
$$
        S:=R(X,L)=\bigoplus_{m\ge 0}H^0(X,mL)
$$
is standard graded.  Then $L$ is globally generated and
$$
        X\cong \Proj S.
$$
Indeed, since $S=R(X,L)$ is generated in degree one, the linear system $|L|$ has no base points: otherwise every section of every $mL$ would vanish at the same point, contradicting the fact that a sufficiently large multiple of an ample line bundle is globally generated.  Hence $L$ is globally generated.  The canonical morphism
$X \longrightarrow \Proj R(X,L)$
associated to an ample invertible sheaf is an open immersion with dense image; see \cite[Tag~01Q1]{Stacks}.  Since $X$ is projective over $k$, it is proper over $k$, while $\Proj S$ is separated over $k$.  Hence this canonical morphism is proper.  A proper open immersion has closed image, and its image is already dense.  It is therefore surjective and hence an isomorphism.  Thus $X\cong\Proj S$.  Under this isomorphism,
$\OO_{\Proj S}(1)\cong L$.
Indeed, the canonical morphism carries the natural map from the pullback of $\OO_{\Proj S}(1)$ to $L$; on every standard open defined by a nonvanishing section of degree one, the two line bundles are trivialized by that same section, so this map is an isomorphism.

For a coherent sheaf $\F$ on $X$, put
$$
        \Gamma_*(\F):=\bigoplus_{m\ge 0}H^0(X,\F\otimes L^{\otimes m}).
$$
Let $D$ be an effective Cartier divisor on $X$.  We define the corresponding divisorial ideal in the section ring by
$$
I_D:=\Gamma_*(\OO_X(-D))
=
\bigoplus_{m\ge 0}H^0(X,mL-D)
\subseteq R(X,L).
$$
More generally,
$$
I_{nD}:=\Gamma_*(\OO_X(-nD))
=
\bigoplus_{m\ge 0}H^0(X,mL-nD).
$$
Since $D$ is Cartier, $\OO_X(-D)$ is invertible, and hence
$\widetilde{I_D}\cong \OO_X(-D), \widetilde{I_{nD}}\cong \OO_X(-nD)$.
Moreover, sheafification commutes with tensor products of invertible sheaves on standard opens, so
$\widetilde{I_D^n}\cong \OO_X(-nD)$.
Thus the product $I_D^n$ and the divisorial module $I_{nD}$ define the same coherent ideal sheaf on $X$.

The difference between these two graded ideals is a saturation phenomenon.  The form used below requires that $S$ be the full section ring.

\begin{lemma}\label{lem:full-section-ring-saturation}
Let $S=R(X,L)$ be a standard graded full section ring as above, and identify $X$ with $\Proj S$.  If $I\subseteq S$ is a homogeneous ideal, then, inside
$S_m=H^0(X,mL)$,
one has
$$
        (I:S_+^\infty)_m
        =H^0\bigl(X,\widetilde I\otimes mL\bigr)
        \qquad(m\geq0).
$$
Equivalently,
$$
        I^{\rm sat}=\Gamma_*(\widetilde I)
$$
as homogeneous submodules of the full section ring $S$.
\end{lemma}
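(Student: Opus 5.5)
We prove both inclusions inside each graded piece $S_m=H^0(X,mL)$, working with the quasi-coherent sheaf $\widetilde I\subseteq\OO_X$ on $X=\Proj S$. The only inputs are that $S_1$ generates $S$ and that $S_m=H^0(X,mL)$ in every degree, so that $S$ coincides with $\Gamma_*(\OO_X)$.

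\emph{Inclusion $\subseteq$.} Let $x\in(I:S_+^\infty)_m$, so $S_+^ix\subseteq I$ for some $i$. Fix $f\in S_1$. On the standard open set $D_+(f)$, the section $\widetilde I$ is the localization $I_{(f)}$, and the class of $x/f^m$ in $\OO_X(D_+(f))=S_{(f)}$ equals $(f^ix)/f^{m+i}$. Since $f^ix\in I$, this lies in $I_{(f)}$. Because $S$ is standard graded, the sets $D_+(f)$ with $f\in S_1$ cover $X$. Hence the section $x\in H^0(X,mL)$ lies locally in $\widetilde I\otimes mL$, and therefore in $H^0(X,\widetilde I\otimes mL)$. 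Note that this subspace of $H^0(X,mL)$ is well defined, since $\widetilde I\otimes mL\to mL$ is injective because $mL$ is invertible.

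\emph{Inclusion $\supseteq$.} Let $x\in H^0(X,\widetilde I\otimes mL)\subseteq S_m$, and choose generators $f_0,\dots,f_r$ of $S_1$. On each $D_+(f_j)$, the element $x/f_j^m$ lies in $I_{(f_j)}$, so $f_j^{n_j}x\in I$ for some $n_j$. Here we use that an element of $S_{(f_j)}$ lying in $I_{(f_j)}$ satisfies $f_j^nx\in I$ for large $n$: it is a quotient $y/f_j^k$ with $y\in I$, and equality in the localization gives $f_j^{n}(f_j^kx-f_j^my)=0$. We do not even need that $S$ is a domain for this step. With $n=\max n_j$, every monomial of degree $(r+1)n$ in the $f_j$ contains some $f_j^{n}$. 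Hence $S_1^{(r+1)n}x\subseteq I$, and since $S_+=(S_1)$, we get $S_+^{(r+1)n}x\subseteq I$. Thus $x\in(I:S_+^\infty)_m$. Summing over $m$ gives $I^{\rm sat}=\Gamma_*(\widetilde I)$.

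The main care point is the identification of $S_m$ with $H^0(X,mL)$, under which $x$ restricts to $x/f^m$ on $D_+(f)$. This is exactly where fullness of the section ring enters: for a non-full ring, $\Gamma_*(\widetilde I)$ could contain elements of $H^0(X,mL)$ not in $S_m$. The identification follows from the isomorphism $X\cong\Proj S$ with $\OO_{\Proj S}(1)\cong L$ established above, under which $f\in S_1$ trivializes $L$ on $D_+(f)$. The rest is routine localization bookkeeping.
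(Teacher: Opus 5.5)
Your proposal is correct and follows essentially the same argument as the paper: compare local sections on the cover by $D_+(x_i)$ for degree-one generators, using $x_i^N s\in I$ for one inclusion and clearing denominators in each localization for the other. The only cosmetic difference is at the very end. You bound the power of $S_+$ by a pigeonhole count on monomials, whereas the paper uses $\sqrt{(x_1^n,\ldots,x_r^n)}=S_+$.
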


\begin{proof}
Let $x_1,\ldots,x_r$ be generators of $S$ in degree one, so that the standard opens $D_+(x_i)$ cover $X$.
If $s\in(I:S_+^\infty)_m$, then $S_+^Ns\subseteq I$ for some $N$. Hence $x_i^Ns\in I$ for every $i$, and therefore $s\in(I_{x_i})_m$.  Since
$\Gamma\bigl(D_+(x_i),\widetilde I\otimes mL\bigr)
=(I_{x_i})_m,$ the section $s$ belongs to
$H^0(X,\widetilde I\otimes mL)$.
Conversely, let
$$
s\in H^0(X,\widetilde I\otimes mL)
\subseteq H^0(X,mL)=S_m.
$$
On each $D_+(x_i)$ one has $s\in(I_{x_i})_m$, so there is an integer
$n_i$ such that $x_i^{n_i}s\in I$.  If $n=\max_i n_i$, then
$$
(x_1^n,\ldots,x_r^n)s\subseteq I.
$$
Since
$$
\sqrt{(x_1^n,\ldots,x_r^n)}=S_+,
$$
some power of $S_+$ is contained in $(x_1^n,\ldots,x_r^n)$.  Hence
$S_+^Ns\subseteq I$ for some $N$, and therefore
$s\in (I:S_+^\infty)$.
\end{proof}

Before turning to the asymptotic estimates, we illustrate the two degree ranges in a case where every summand can be counted exactly. It also shows that the degrees in which the Frobenius generators act cannot, in general, be discarded.

\begin{example}\label{ex:veronese-lower-upper}
Let $X=\PP^1_k$ with homogeneous coordinates $[u:v]$, let
$$
        L=\OO_{\PP^1}(2),
        \qquad
        D=V(u),
$$
and let
$$
        S=R(X,L)=k[u^2,uv,v^2]
$$
with its standard grading.  Since $L-D=\OO_{\PP^1}(1)$, multiplication by the canonical section $u$ of $D$ embeds
$$
        H^0(\PP^1,\OO_{\PP^1}(1))=\langle u,v\rangle_k
$$
into $S_1$, and the resulting ideal is
$$
        J=(u^2,uv)\subseteq S.
$$
Assume that $p>2$, and let $Q=p^e=2r+1$.

For $m<Q$, the Frobenius ideal has no component of degree $m$, while Lemma~\ref{lem:frobenius-saturation-by-degree} gives
$$
        \bigl((J^{[Q]})^{\rm sat}\bigr)_m
        =H^0\bigl(\PP^1,\OO_{\PP^1}(2m-Q)\bigr).
$$
Consequently the total contribution from the degrees below $Q$ is
$$
\begin{aligned}
        \sum_{0\leq m<Q}
        \dim_k\left(
        \frac{(J^{[Q]})^{\rm sat}}{J^{[Q]}}
        \right)_m
        & =\sum_{m=r+1}^{2r}(2m-2r) \\
        & =r(r+1).
\end{aligned}
$$

Now write $m=Q+a$ with $a\geq0$.  By Lemma~\ref{lem:frobenius-saturation-by-degree}, multiplication by $u^Q$ identifies the degree $Q+a$ part of the saturation with
$$
        \bigl((J^{[Q]})^{\rm sat}\bigr)_{Q+a}
        =u^QH^0\bigl(\PP^1,\OO_{\PP^1}(Q+2a)\bigr)
        \subseteq S_{Q+a}.
$$
On the other hand,
$$(J^{[Q]})_{Q+a}=(u^{2Q},u^Qv^Q)S_a=u^Q\langle u^Q,v^Q\rangle_k H^0\bigl(\PP^1,\OO_{\PP^1}(2a)\bigr).$$
Since multiplication by $u^Q$ is injective, cancelling this common factor identifies the degree $Q+a$ part of the saturation quotient with the cokernel of
$$
        \langle u^Q,v^Q\rangle_k
        \otimes H^0\bigl(\PP^1,\OO_{\PP^1}(2a)\bigr)
        \longrightarrow
        H^0\bigl(\PP^1,\OO_{\PP^1}(Q+2a)\bigr).
$$
Equivalently, this is the map
$$
        H^0\bigl(\PP^1,\OO_{\PP^1}(1)\bigr)^{(Q)}
        \otimes H^0\bigl(\PP^1,\OO_{\PP^1}(2a)\bigr)
        \longrightarrow
        H^0\bigl(\PP^1,\OO_{\PP^1}(Q+2a)\bigr)
$$
induced by taking $Q$th powers in the first factor.  The cokernels $C_{Q,a}$ introduced in Subsection~\ref{subsec:split-pencil-bundle} are the higher dimensional analogues of this map.
The target has the monomial basis
$$
        u^iv^{Q+2a-i},
        \qquad 0\leq i\leq Q+2a.
$$
The image contains exactly the monomials with $i\leq2a$ or $i\geq Q$.  Hence the missing monomials are those with
$$
        2a<i<Q,
$$
and the cokernel has dimension
$$
        \max\{Q-2a-1,0\}.
$$
The total contribution from the degrees at least $Q$ is therefore
$$
        \sum_{a\geq0}\max\{Q-2a-1,0\}
        =r(r+1).
$$
Thus
$$
        \lambda_S\left(
        \frac{(J^{[Q]})^{\rm sat}}{J^{[Q]}}
        \right)
        =2r(r+1)
        =\frac{Q^2-1}{2},
$$
and
$$
        e_{\rm gHK}(JS_{S_+})=\frac{1}{2}.
$$
Here the lower and upper ranges contribute equally, so the upper multiplication cokernel is not a correction of lower order.  In the Enriques construction of Section~\ref{sec:enriques-gHK}, the upper contribution requires a separate calculation of its logarithmic part and sign.\hfill\qedsymbol
\end{example}

\subsection{Linear Frobenius--Serre Vanishing}\label{subsec:frobenius-serre-vanishing}

The next estimate is used in two places.  In
Lemma~\ref{lem:upper-sum-finite}, it bounds the degrees in which the upper cokernels can be nonzero.  In
Lemma~\ref{lem:fixed-homogeneous-LC}, it gives the linear LC bound for Frobenius powers of homogeneous ideals.  Both applications require vanishing once the twist is larger than a fixed multiple of $Q$.
The estimate follows from the finite regularity resolution of Arapura~\cite[Corollary~3.2]{ArapuraFrobenius}, Frobenius flatness, and
Serre vanishing.

\begin{lemma}\label{lem:linear-frobenius-serre-vanishing}
Let $X$ be a regular projective variety over a field of characteristic $p>0$, and let $L$ be an ample line bundle on $X$.  For every coherent sheaf $\F$ on $X$, there is a constant $C_\F>0$ such that, for every
$Q=p^e$ and every integer $m\geq C_\F Q$, one has
$$
H^i\bigl(X,F^{e*}\F\otimes mL\bigr)=0
\qquad\text{for all }i>0.
$$
\end{lemma}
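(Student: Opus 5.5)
The plan is to reduce to a resolution of $\F$ by direct sums of negative twists of $L$, pull this resolution back by Frobenius, and then use Serre vanishing, keeping every regularity bound linear in $Q$.

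\emph{Step 1: a finite resolution.} Since $X$ is regular and projective and $L$ is ample, $\F$ has a finite locally free resolution
$$
0\to E_n\to\cdots\to E_1\to E_0\to\F\to 0
$$
with each $E_j$ a finite direct sum of line bundles $\OO_X(-b_{jl}L)$. Here $n\le\dim X$, possibly after truncating with a locally free kernel. I would cite Arapura~\cite[Corollary~3.2]{ArapuraFrobenius} for this finite regularity form. If only the final kernel $K$ is locally free rather than split, I would keep it as a single vector bundle.

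\emph{Step 2: Frobenius pullback.} Because $X$ is regular, the Frobenius $F$ is flat by Kunz's theorem. Hence $F^{e*}$ is exact, and it carries the resolution to an exact complex
$$
0\to F^{e*}E_n\to\cdots\to F^{e*}E_0\to F^{e*}\F\to0.
$$
On the split terms the pullback is explicit: $F^{e*}\OO_X(-bL)=\OO_X(-QbL)$.

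\emph{Step 3: vanishing on the split terms.} Fix $M$ with $H^i(X,tL)=0$ for all $i>0$ and $t\ge M$; this exists by Serre vanishing. Let $B=\max_{j,l} b_{jl}$. Then
$$
H^i\bigl(X,\OO_X((m-Qb)L)\bigr)=0 \qquad\text{whenever } m\ge QB+M.
$$
Since $M\le MQ$, this holds for all $m\ge (B+M)Q$.

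\emph{Step 4: chasing the complex.} Break the pulled-back complex into short exact sequences and twist them by $mL$. A descending induction then gives
$$
H^i\bigl(X,F^{e*}\F\otimes mL\bigr)=0 \qquad\text{for all } i>0,
$$
because these groups are controlled by $H^{i+j}(X,F^{e*}E_j\otimes mL)$. The groups with $i+j>\dim X$ vanish automatically.

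\emph{Main obstacle.} The hard part is the last term when it is a non-split locally free $K$. There $F^{e*}K\otimes mL$ is not a twist that Serre vanishing handles uniformly in $e$. First, I would avoid this by taking the resolution long enough, of length at least $\dim X+1$, with all terms split. Then the last kernel contributes only to $H^{i+j}$ with $i+j>\dim X$, which is zero, so every needed group comes from split terms. This is exactly what Arapura's corollary provides, and it yields $C_\F=B+M$.
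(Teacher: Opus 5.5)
Your proposal is correct and follows the paper's argument: a split resolution from Arapura's corollary truncated after $\dim X$ steps, exactness of its Frobenius pullback by Kunz's theorem, Serre vanishing on the twisted split terms once $m$ exceeds a fixed multiple of $Q$, and the dimension-shifting chase that ends in Grothendieck vanishing for the last kernel. There are two cosmetic differences. You apply Serre vanishing to $tL$ directly, whereas the paper works with $A^t\otimes rL$ for $A=sL$ very ample. Your Step 1 first asks for a terminating split resolution, which does not exist in general, but your final paragraph correctly replaces it with a non-terminating one, exactly as the paper does.
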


\begin{proof}
Choose $s\geq1$ such that $A:=sL$ is very ample, and put
$d=\dim X$.  Choose $m_0$ such that $\F$ is $m_0$-regular with
respect to $A$, and set
$$
R_A=\max\{1,\operatorname{reg}_A(\OO_X)\}.
$$
By \cite[Corollary~3.2]{ArapuraFrobenius}, truncated after $d$ steps,
there is an exact sequence
$$
E_d\longrightarrow E_{d-1}\longrightarrow\cdots
\longrightarrow E_0\longrightarrow\F\longrightarrow0,
$$
where
$$
E_j=V_j\otimes A^{-(m_0+jR_A)}
$$
for finite-dimensional vector spaces $V_j$.  Since $X$ is regular, the
absolute Frobenius is flat by Kunz's theorem \cite{Kunz69}; pulling back
this sequence by $F^e$ therefore preserves exactness.

Write $m=sq+r$ with $0\leq r<s$.  After Frobenius pullback and twisting
by $mL=qA+rL$, every summand in the $j$th term has the form
$$
A^{q-Q(m_0+jR_A)}\otimes rL.
$$
There are only finitely many pairs $(j,r)$.  Serre vanishing therefore
gives an integer $M\geq0$ such that
$$
H^i\bigl(X,A^t\otimes rL\bigr)=0
\qquad
(i>0,\ t\geq M,\ 0\leq r<s).
$$
Set
$$
B=\max_{0\leq j\leq d}\{0,m_0+jR_A\}.
$$
If $q\geq BQ+M$, then $q-Q(m_0+jR_A)\geq M$ for every $j$, so every
term in the pulled-back and twisted sequence has vanishing higher
cohomology.

Set
$$
\mathcal E_j:=F^{e*}E_j\otimes mL,
\qquad
\mathcal G:=F^{e*}\F\otimes mL,
$$
and put $\mathcal K_0:=\mathcal G$.  For
$0\leq j\leq d-1$, define
$$
\mathcal K_{j+1}
:=
\ker\bigl(\mathcal E_j\longrightarrow\mathcal K_j\bigr),
$$
where the map is induced by the pulled-back and twisted resolution. Exactness gives short exact sequences
$$
0\longrightarrow\mathcal K_{j+1}
\longrightarrow\mathcal E_j
\longrightarrow\mathcal K_j
\longrightarrow0
\qquad
(0\leq j\leq d-1).
$$
Each $\mathcal K_j$ is coherent.  Since
$$
H^t(X,\mathcal E_j)=0
\qquad(t>0),
$$
the associated long exact cohomology sequences give
$$
H^t(X,\mathcal K_j)
\cong
H^{t+1}(X,\mathcal K_{j+1})
\qquad(t>0).
$$
Iterating, for every $i>0$ one obtains
$$
H^i(X,\mathcal G)
\cong
H^{i+d}(X,\mathcal K_d).
$$
Since $i+d>d=\dim X$, Grothendieck vanishing gives
$$
H^{i+d}(X,\mathcal K_d)=0.
$$
Therefore
$$
H^i\bigl(X,F^{e*}\F\otimes mL\bigr)=0
\qquad(i>0)
$$
whenever $q\geq BQ+M$. Finally, set
$$
C_\F:=s(B+M+1).
$$
If $m\geq C_\F Q$, then
$$
q =
\left\lfloor\frac{m}{s}\right\rfloor
\geq
(B+M+1)Q-1
\geq
BQ+M.
$$
This proves the lemma.
\end{proof}

\subsection{Volumes, Pseudo-effective Thresholds, and Uniform Asymptotics}\label{subsec:volume-prelim}

The degreewise section counts arising from saturation will eventually be summed over an interval whose length grows with the Frobenius exponent.  Pointwise asymptotics are therefore insufficient; we need one error estimate that is uniform as the numerical class varies through a compact segment.

Let $X$ be a projective variety of dimension $r$.  For a Cartier divisor $A$ on $X$, its volume is
$$
\vol_X(A)
:=
\limsup_{m\to\infty}
\frac{h^0(X,mA)}{m^r/r!}.
$$
The volume measures the asymptotic growth of spaces of global sections of the line bundles determined by Cartier divisors.  It depends only on the numerical class of $A$ and extends continuously to $N^1(X)_\RR$; see Lazarsfeld \cite[Theorem~2.2.44]{Lazarsfeld}.  It is homogeneous of degree $r$:
$$
\vol_X(a\alpha)=a^r\vol_X(\alpha)
\qquad
(a\in\RR_{\ge 0},\ \alpha\in N^1(X)_\RR),
$$
and if $A$ is nef, then
$\vol_X(A)=A^r$.
The continuity and convex geometry of volume functions are studied in the work of Lazarsfeld-Musta\c{t}\u{a} \cite{LM} and K\"uronya-Lozovanu-Maclean \cite{KLM}; here we need only continuity and a uniform asymptotic estimate for section counts on compact subsets of $N^1(X)_\RR$.

Now let $L$ be ample and let $D$ be an effective Cartier divisor on $X$.  We define the pseudo-effective threshold of $D$ with respect to $L$ by
$\tau=\tau_L(D) := \inf\{\,t\ge 0\mid tL-D\text{ is pseudo-effective}\,\}$.
This is the first point at which the numerical ray $tL-D$ meets the pseudo-effective cone.  In the applications below, $cL-D$ is globally generated, hence effective, and therefore pseudo-effective.  Thus $\tau\le c$ in the range relevant to the one ideal formula.

We use the specialization below of the uniform asymptotic estimate for finite families of real Cartier divisors in \cite[Proposition~3.5.1]{BGGJKM}.  It is stated in the form required by the Frobenius Riemann sums below.

\begin{lemma}\label{lem:uniform-volume-asymptotics}
Let $X$ be a projective variety of dimension $r$, and let $L$ and $D$ be Cartier divisors on $X$.  Fix real numbers
$0\leq a<b$.
Then there exists a function $\rho(n)=o(n^r)$ such that, for all sufficiently large integers $n$ and every integer $m$ with
$\lceil an\rceil\leq m\leq \lfloor bn\rfloor$,
one has
$$
\left|
h^0(X,mL-nD)
-
\frac{n^r}{r!}\vol_X\left(\frac{m}{n}L-D\right)
\right|
\leq \rho(n).
$$
\end{lemma}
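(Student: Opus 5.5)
The plan is to reduce the statement to the uniform estimate of \cite[Proposition~3.5.1]{BGGJKM}, applied to a compact set of real numerical classes. Write every admissible degree as $m=tn$ with $t=m/n\in[a,b]$. Then
$$
mL-nD=n\Bigl(\tfrac{m}{n}L-D\Bigr)=n\,(tL-D).
$$
So $h^0(X,mL-nD)$ is the number of sections of the $n$th multiple of a real class lying on the compact segment
$$
\Sigma=\{tL-D\mid t\in[a,b]\}\subseteq N^1(X)_\RR.
$$
Only the integral combinations $mL-nD$ actually occur, and these are genuine Cartier divisors.

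The first step is to parametrize $\Sigma$ by the finite family of Cartier divisors $L$ and $D$, with coefficients $(t,-1)$ ranging over the compact set $[a,b]\times\{-1\}\subseteq\RR^2$. The cited proposition, in the form for finitely many Cartier divisors $D_1,\dots,D_k$ and coefficient vectors in a compact set $K$, gives a function $\rho(n)=o(n^r)$ with
$$
\Bigl|h^0\bigl(X,\textstyle\sum_i\lfloor n c_i\rfloor D_i\bigr)-\tfrac{n^r}{r!}\vol_X\bigl(\textstyle\sum_i c_iD_i\bigr)\Bigr|\leq\rho(n)
$$
uniformly for $c\in K$. Apply it with $D_1=L$, $D_2=D$, $K=[a,b]\times\{-1\}$, and $c=(m/n,-1)$. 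Since $m$ and $n$ are integers, $\lfloor n\cdot m/n\rfloor=m$ and $\lfloor -n\rfloor=-n$, so the divisor inside $h^0$ is exactly $mL-nD$. Homogeneity of the volume gives $\vol_X(n(tL-D))=n^r\vol_X(tL-D)$, which matches the normalization in the statement. The condition $\lceil an\rceil\leq m\leq\lfloor bn\rfloor$ is exactly what guarantees $m/n\in[a,b]$.

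If the cited proposition is stated only for rational coefficients, or with a different rounding convention, I would fall back on a direct argument. Choose a fine partition $a=t_0<\dots<t_N=b$. For $m/n\in[t_j,t_{j+1}]$, sandwich $mL-nD$ between $\lfloor t_jn\rfloor L-nD$ and $\lceil t_{j+1}n\rceil L-nD$. This needs monotonicity of $h^0$ in $m$, which holds after multiplying by a fixed nonzero section of some multiple of $L$ when $L$ is effective. Combining pointwise convergence at the finitely many $t_j$ with uniform continuity of $\vol_X$ on the segment (continuity from \cite[Theorem~2.2.44]{Lazarsfeld}) then gives the estimate: for each $\varepsilon$, the error is eventually at most $\varepsilon n^r$, and a diagonal choice of $\varepsilon$ produces $\rho$. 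The \emph{limsup} definition of the volume is a nuisance here, since the lower bound needs an actual limit along the sequence $n$. This is exactly where the cited result, or Fujita approximation, is needed.

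The main obstacle is therefore bookkeeping rather than mathematics: checking that the hypotheses of \cite[Proposition~3.5.1]{BGGJKM} (a projective variety, finitely many Cartier divisors, coefficients in a compact set) cover the present situation with no positivity assumption on $L$, and that its rounding convention reproduces $mL-nD$ exactly. Under the intended use, with $L$ ample, the monotonicity fallback above also applies.
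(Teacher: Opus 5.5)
Your main argument matches the paper's proof. Both apply \cite[Proposition~3.5.1]{BGGJKM} to the pair $L$, $D$ and finish with homogeneity of the volume, so your fallback via monotonicity and Fujita approximation is not needed. The only difference is bookkeeping: the paper uses the cited result in the form $\bigl|h^0(X,mL-nD)-\tfrac{1}{r!}\vol_X(mL-nD)\bigr|\leq\rho_0(m+n)$ for all $m,n\geq1$ (family $L,-D$). It then derives your uniform-in-$n$ version from $n\leq m+n\leq(b+1)n+1$ by setting $\rho(n)=\varepsilon(n)((b+1)n+1)^r$ with $\varepsilon(n)=\sup_{M\geq n}|\rho_0(M)|/M^r$, and it treats $m=0$ separately via the family $\{-D\}$.
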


\begin{proof}
Apply \cite[Proposition~3.5.1]{BGGJKM} to the fixed family $L,-D$.  It gives a function $\rho_0(M)=o(M^r)$ such that, for all $m,n\geq1$,
$$
\left|
h^0(X,mL-nD)-\frac{1}{r!}\vol_X(mL-nD)
\right|
\leq \rho_0(m+n).
$$
If $a=0$ and $m=0$, the same conclusion follows by applying the cited proposition to the family consisting only of $-D$; after enlarging $\rho_0$, we may include this case as well.

Set
$\varepsilon(n):= \sup_{M\geq n}\frac{|\rho_0(M)|}{M^r}$.
Then $\varepsilon(n)\to0$.  On the stated range,
$n\leq m+n\leq (b+1)n+1$,
and hence
$$
|\rho_0(m+n)|
\leq
\varepsilon(n)((b+1)n+1)^r
=: \rho(n),
$$
where $\rho(n)=o(n^r)$.  Finally,
$\vol_X(mL-nD) = n^r\vol_X\left(\frac{m}{n}L-D\right)$
by homogeneity of volume, and the claimed estimate follows.
\end{proof}

Let $0\leq\tau<c$.  The interval
$\lceil\tau n\rceil\leq m<cn$
contains $O(n)$ integers, so Lemma~\ref{lem:uniform-volume-asymptotics} shows that replacing each $h^0(X,mL-nD)$ by
$\frac{n^r}{r!}\vol_X\left(\frac{m}{n}L-D\right)$
produces a total error $o(n^{r+1})$.  After division by $n^{r+1}$, the remaining sum is the Riemann sum of the continuous function $t\mapsto\vol_X(tL-D)$.  Therefore
$$
\lim_{n\to\infty}
\frac{1}{n^{r+1}}
\sum_{\lceil\tau n\rceil\leq m<cn}
h^0(X,mL-nD)
=
\frac{1}{r!}
\int_{\tau}^{c}\vol_X(tL-D)\,dt.
$$
Changing either endpoint convention affects only $O(1)$ summands.  On the compact interval $[\tau,c]$, each such summand is $O(n^r)$ by Lemma~\ref{lem:uniform-volume-asymptotics} and boundedness of the volume function, so the normalized endpoint contribution tends to zero.

\begin{remark}\label{rem:why-uniformity-needed}
Uniformity is needed because the normalized class $\frac{m}{n}L-D$ varies over a compact segment containing $O(n)$ lattice points, and a pointwise asymptotic estimate would not by itself control the sum.
\end{remark}

Lemma~\ref{lem:uniform-volume-asymptotics} converts the lower degree sum into a volume integral.  Lemma~\ref{lem:weighted-pb-volume} computes that volume on the split projective bundle used below.

\subsection{Projective Bundle Convention and Riemann Sum Volumes}\label{subsec:projective-bundle-prelim}

The monomial decomposition of symmetric powers of a split bundle of rank three gives the triangle used in the main calculation.  We fix the projective bundle convention before passing to the volume formula.

We use the quotient convention for projective bundles:
$$
        \mathbf P_Y(E):=\Proj_Y \Sym^\bullet E.
$$
Thus $\mathbf P_Y(E)$ parametrizes one dimensional quotients of the fibers of $E$, the tautological line bundle satisfies
$$
        \pi^*E\twoheadrightarrow \OO_{\mathbf P_Y(E)}(1),
$$
and
$$
        \pi_*\OO_{\mathbf P_Y(E)}(m)=\Sym^m E
        \qquad(m\geq 0).
$$
See \cite[Tag~01OA]{Stacks}.  This convention is used throughout the projective bundle constructions below.

The volume computations use a standard Riemann sum mechanism.  If
$$
        E=\bigoplus_{i=0}^r\OO_Y(A_i),
$$
then sections of $m\OO_{\mathbf P_Y(E)}(1)$ decompose into summands indexed by lattice points in a simplex:
$$
\Sym^m E
=
\bigoplus_{n_0+\cdots+n_r=m}
\OO_Y(n_0A_0+\cdots+n_rA_r).
$$
After dividing by $m$, these lattice points become a Riemann sum over the simplex.  Combining this with the uniform volume estimate from Subsection~\ref{subsec:volume-prelim} gives the projective bundle volume formula proved in Lemma~\ref{lem:weighted-pb-volume}.

\subsection{A Logarithmic Transcendence Input}\label{subsec:baker-prelim}

The geometric integrals below evaluate to an algebraic number plus logarithms of positive algebraic numbers.  Baker's theorem turns nonvanishing of the logarithmic part into transcendence; we use the form below from Waldschmidt \cite[Theorem~11.1]{Waldschmidt}.  Throughout, a logarithm of a positive algebraic number means its real logarithm.

We say that a real number $\Theta$ has a \emph{presentation by algebraic numbers and logarithms} if
$$
        \Theta=A+\Lambda,
        \qquad
        A\in\overline{\QQ},
        \qquad
        \Lambda=\sum_{j=1}^r b_j\log\alpha_j,
$$
where $b_j\in\overline{\QQ}$ and $\alpha_j\in\overline{\QQ}_{>0}$.  Once Theorem~\ref{thm:Baker} is available, the numerical value of $\Lambda$ is independent of the chosen presentation: the difference of two such logarithmic sums is algebraic, and Baker's theorem forces that difference to be zero.  We therefore call $\Lambda$ the \emph{logarithmic part} of $\Theta$.

\begin{theorem}\label{thm:Baker}
Let $\alpha_1,\ldots,\alpha_m$ be positive algebraic numbers whose real logarithms
$$
        \log\alpha_1,\ldots,\log\alpha_m
$$
are linearly independent over $\QQ$.  Then
$$
        1,\log\alpha_1,\ldots,\log\alpha_m
$$
are linearly independent over $\overline{\QQ}$.
\end{theorem}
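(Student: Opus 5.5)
This statement is Baker's theorem on linear forms in logarithms, in the homogeneous form given by Waldschmidt \cite[Theorem~11.1]{Waldschmidt}. It is a deep transcendence result that we will not reprove; the plan is only to show that the version stated here follows directly from the standard formulation. The standard form reads: if $\alpha_1,\ldots,\alpha_m$ are nonzero algebraic numbers and $\log\alpha_1,\ldots,\log\alpha_m$ are values of the logarithm that are linearly independent over $\QQ$, then $1,\log\alpha_1,\ldots,\log\alpha_m$ are linearly independent over $\overline{\QQ}$. The first step is to specialize to positive real algebraic $\alpha_j$ and to choose the principal branch, so that each $\log\alpha_j$ is the real logarithm. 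The hypothesis of the statement is then exactly the hypothesis of the cited theorem.

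The second step is to check that the conclusion transfers without change. A relation $\beta_0+\sum_j\beta_j\log\alpha_j=0$ with $\beta_j\in\overline{\QQ}$ not all zero is the kind of relation the cited theorem rules out, so no such relation exists. Some formulations only exclude relations with nonzero constant term $\beta_0$, or state the result only for homogeneous forms. If the cited version has this shape, I would first handle the homogeneous case $\beta_0=0$ separately, using the other half of Baker's theorem: $\QQ$-independence of the logarithms implies $\overline{\QQ}$-independence. The inhomogeneous case then follows from the full statement.

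No step here is an obstacle beyond matching the formulation to the reference. The one thing to verify carefully is that the reference allows arbitrary algebraic coefficients, including the constant term, and not just rational ones.

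Downstream, the theorem will be used in the form of Corollary~\ref{cor:Baker-linear-form}. Given $\Theta=A+\sum_j b_j\log\alpha_j$, I would first extract a $\QQ$-basis of the $\QQ$-span of the logarithms and rewrite each $\log\alpha_j$ as a rational combination of the basis elements. This keeps all coefficients algebraic. The theorem then shows that $\Theta$ is transcendental whenever the logarithmic part $\Lambda$ is nonzero.
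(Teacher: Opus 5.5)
Your proposal is correct and takes the same approach as the paper: the paper also gives no proof and cites Baker's theorem in the form of Waldschmidt's Theorem~11.1, specialized to positive algebraic numbers with the real logarithm as the chosen branch. Your case split on the constant term $\beta_0$ is harmless but unnecessary, because the cited statement already excludes relations with arbitrary algebraic coefficients, constant term included.
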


We use the resulting consequence.

\begin{corollary}\label{cor:Baker-linear-form}
Let $\ell_1,\dots,\ell_n$ be real logarithms of positive algebraic numbers.  Let $A_0,\dots,A_n\in\overline{\QQ}$, and put
$$
\Lambda
=
A_0+\sum_{j=1}^n A_j\ell_j.
$$
If the logarithmic part
$$
        \sum_{j=1}^n A_j\ell_j
$$
is nonzero, then $\Lambda$ is transcendental.
\end{corollary}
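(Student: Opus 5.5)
The argument is a reduction to Theorem~\ref{thm:Baker}. We argue by contradiction: assume $\Lambda=\alpha$ for some algebraic number $\alpha$. Then
$$\sum_{j=1}^n A_j\ell_j=\alpha-A_0\in\overline{\QQ}.$$

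First we reduce to a $\QQ$-linearly independent family of logarithms. Write $\ell_j=\log\beta_j$ with $\beta_j\in\overline{\QQ}_{>0}$. The $\QQ$-span of $\ell_1,\dots,\ell_n$ in $\RR$ is finite dimensional. Choose a basis $\mu_1,\dots,\mu_m$ of it.

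We want each $\mu_k$ to again be the logarithm of a positive algebraic number. To arrange this, clear denominators: pick a maximal $\QQ$-independent subset of the $\ell_j$, say $\mu_k=\ell_{j_k}$. Then each $\ell_j$ is a $\QQ$-combination
$$\ell_j=\sum_k q_{jk}\mu_k,\qquad q_{jk}\in\QQ.$$
Rational combinations of logarithms of positive algebraic numbers are logarithms of positive algebraic numbers, since $\exp$ takes them to products of positive real roots of algebraic numbers. So the chosen basis already consists of such logarithms, say $\mu_k=\log\alpha_k$.

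If the span is zero, every $\ell_j=0$. Then the logarithmic part is zero, contrary to hypothesis, so we may assume $m\geq1$.

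Substituting the expressions for the $\ell_j$ gives
$$\sum_j A_j\ell_j=\sum_{k=1}^m B_k\log\alpha_k,\qquad B_k=\sum_j A_jq_{jk}\in\overline{\QQ}.$$
Since the left side is nonzero, some $B_k\neq0$.

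The assumed algebraicity now gives a relation
$$(A_0-\alpha)\cdot1+\sum_k B_k\log\alpha_k=0$$
with algebraic coefficients, not all zero. Here $\log\alpha_1,\dots,\log\alpha_m$ are $\QQ$-linearly independent real logarithms of positive algebraic numbers. So Theorem~\ref{thm:Baker} says that $1,\log\alpha_1,\dots,\log\alpha_m$ are $\overline{\QQ}$-linearly independent, and this relation is impossible. Hence $\Lambda$ is transcendental.

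The only delicate point is the basis extraction: we must choose the basis from among the $\ell_j$ themselves, so that it stays inside logarithms of positive algebraic numbers and Theorem~\ref{thm:Baker} applies. The rest is bookkeeping of the coefficients $B_k$ and the observation that a nonzero value forces some $B_k\neq0$.
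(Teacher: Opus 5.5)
Your proposal is correct and matches the paper's proof: extract a maximal $\QQ$-independent subfamily of the $\ell_j$, rewrite the nonzero logarithmic part with algebraic coefficients $B_k$ not all zero, and contradict Theorem~\ref{thm:Baker} via the relation $(A_0-\alpha)+\sum_k B_k\log\alpha_k=0$. Your remark that rational combinations of such logarithms are again logarithms of positive algebraic numbers is harmless but unnecessary, since the basis is drawn from the $\ell_j$ themselves.
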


\begin{proof}
Choose a maximal subfamily $u_1,\dots,u_s$ of the logarithms that is linearly independent over $\QQ$ and rewrite the logarithmic part as $\sum_i B_i u_i$, with $B_i\in\overline{\QQ}$ and not all $B_i$ zero.  If $\Lambda$ were algebraic, then
$(A_0-\Lambda)+\sum_{i=1}^s B_i u_i=0$
would contradict Theorem~\ref{thm:Baker}.
\end{proof}

In the application, the logarithms need not be independent as first written.  Passing to a maximal subfamily that is linearly independent over $\QQ$ reduces the problem to showing that the resulting logarithmic sum is nonzero.

\section{Volumes on Split Projective Bundles}\label{sec:geometric-input}

For a split vector bundle $E=\bigoplus_i\OO_Y(A_i)$, high powers of the tautological line bundle on $\PP_Y(E)$ decompose into summands indexed by lattice points of a simplex.  The normalized section count is therefore a Riemann sum for the volume function on $Y$.  The lemma below gives the normalization used in Section~\ref{sec:enriques-gHK}; when the rank is three, the simplex is a triangle.

\begin{lemma}\label{lem:weighted-pb-volume}
Let $Y$ be an integral projective variety of dimension $v$, let
$$
        E=\bigoplus_{i=0}^r \OO_Y(A_i)
$$
for Cartier divisors $A_0,\ldots,A_r$ on $Y$, and set
$$
X:=\mathbf P_Y(E),
\qquad
\xi:=\OO_X(1),
\qquad
\pi:X\to Y.
$$
We use the quotient convention for projective bundles, so that
$$
        \pi_*\OO_X(m\xi)=\Sym^m E
        \qquad(m\geq 0).
$$
Let $d=v+r$.  For every integer $a\geq 1$ and every Cartier divisor $B$ on $Y$, one has
$$
\vol_X(a\xi+\pi^*B)
=
\frac{d!}{v!}
\int_{\Delta_a}
\vol_Y\left(B+\sum_{i=0}^r\mu_iA_i\right)
\,d\mu_1\cdots d\mu_r,
$$
where
$$
\Delta_a
:=
\left\{
(\mu_0,\ldots,\mu_r)\in\RR_{\geq 0}^{r+1}
\ \middle|\ 
\sum_{i=0}^r\mu_i=a
\right\}.
$$
Here the integral is taken in the coordinates $\mu_1,\ldots,\mu_r$, with
$\mu_0=a-\sum_{i=1}^r\mu_i$.
\end{lemma}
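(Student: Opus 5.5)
The plan is to compute $h^0(X,m(a\xi+\pi^*B))$ by pushing forward to $Y$, express it as a lattice sum over a dilated simplex, and then turn that lattice sum into a Riemann sum using the uniform volume asymptotics of Lemma~\ref{lem:uniform-volume-asymptotics}.

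\emph{Pushforward.} By the projection formula and the quotient convention,
$$
\pi_*\OO_X(ma\xi+m\pi^*B)=\Sym^{ma}E\otimes\OO_Y(mB)
=\bigoplus_{n_0+\cdots+n_r=ma}\OO_Y\Bigl(mB+\sum_i n_iA_i\Bigr).
$$
Hence
$$
h^0(X,m(a\xi+\pi^*B))=\sum_{n\in ma\Delta_1\cap\ZZ^{r+1}}h^0\Bigl(Y,mB+\sum_i n_iA_i\Bigr).
$$
The number of lattice points is $O(m^r)$.

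\emph{Uniform estimate.} I need a version of Lemma~\ref{lem:uniform-volume-asymptotics} for the finite family $B,A_0,\ldots,A_r$. Applying \cite[Proposition~3.5.1]{BGGJKM} to this family gives
$$
\Bigl|h^0\bigl(Y,mB+\textstyle\sum n_iA_i\bigr)-\tfrac{1}{v!}\vol_Y\bigl(mB+\textstyle\sum n_iA_i\bigr)\Bigr|\le\rho_0\Bigl(m+\sum n_i\Bigr),
$$
with $\rho_0(M)=o(M^v)$. Since $\sum n_i=ma$, the right side is $\rho_0((1+a)m)=o(m^v)$, uniformly in $n$. Summing over the $O(m^r)$ lattice points gives a total error of $o(m^{v+r})=o(m^d)$. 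By homogeneity,
$$
\vol_Y\Bigl(mB+\sum n_iA_i\Bigr)=m^v\vol_Y\Bigl(B+\sum\mu_iA_i\Bigr),\qquad \mu=n/m\in\Delta_a\cap\tfrac1m\ZZ^{r+1}.
$$

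\emph{Riemann sum.} The points $(\mu_1,\ldots,\mu_r)$ form the grid $\tfrac1m\ZZ^r$ intersected with the projected simplex $\{\mu_i\ge0,\ \sum_{i\ge1}\mu_i\le a\}$, and each grid cell has volume $m^{-r}$. The function $\mu\mapsto\vol_Y(B+\sum\mu_iA_i)$ is continuous on $N^1(Y)_\RR$ by \cite[Theorem~2.2.44]{Lazarsfeld}, and the simplex is compact, so the function is uniformly continuous and bounded there. Therefore
$$
m^{-r}\sum_\mu\vol_Y\Bigl(B+\sum\mu_iA_i\Bigr)\longrightarrow\int_{\Delta_a}\vol_Y\Bigl(B+\sum\mu_iA_i\Bigr)\,d\mu_1\cdots d\mu_r.
$$
Boundary cells contribute $O(m^{r-1})$ points, each with bounded value, which is negligible. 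Combining the two steps,
$$
\frac{h^0(X,m(a\xi+\pi^*B))}{m^d/d!}\longrightarrow\frac{d!}{v!}\int_{\Delta_a}\vol_Y\Bigl(B+\sum\mu_iA_i\Bigr)\,d\mu_1\cdots d\mu_r,
$$
which is the claim. Since the sequence actually converges, the limsup in the definition of volume equals this limit.

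\emph{Main obstacle.} The hard step is the uniformity in the second paragraph: a pointwise estimate for each class does not control a sum over $O(m^r)$ summands, as Remark~\ref{rem:why-uniformity-needed} notes. I would first try to get uniformity directly from the family version in \cite{BGGJKM}, which covers arbitrary integer combinations of a fixed finite family. If it applies only to nonnegative coefficients, I would treat $B$ and $-B$ separately, adding $\pm B$ to the family so that all coefficients are nonnegative. This gives an error bound depending only on the total coefficient size, which is $O(m)$.
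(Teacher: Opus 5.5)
Your proposal is correct and follows the paper's proof essentially step for step. Both arguments push forward to a lattice sum over the dilated simplex. Both bound the error uniformly via \cite[Proposition~3.5.1]{BGGJKM} applied to the family $B,A_0,\ldots,A_r$, using $m+\sum n_i=(a+1)m$. Both then conclude with a Riemann sum, using continuity of volume on the compact simplex.

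The one detail the paper makes explicit that you leave implicit is the case of zero coefficients $n_i=0$. The paper handles it by applying the cited proposition to each of the finitely many supports and taking the maximum of the resulting error functions. The sign issue you raise does not arise, since all coefficients ($m$ on $B$ and $n_i$ on $A_i$) are already nonnegative.
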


\begin{proof}
For every integer $k\geq1$, the projective bundle formula and the splitting of $E$ give
$$
\begin{aligned}
h^0\left(X,k(a\xi+\pi^*B)\right)
&=h^0\left(Y,\Sym^{ka}(E)\otimes\OO_Y(kB)\right)\\
&=\sum_{n_0+\cdots+n_r=ka}
h^0\left(Y,kB+\sum_{i=0}^r n_iA_i\right).
\end{aligned}
$$
Set
$\Delta_a(k)=\Delta_a\cap \left(\frac{1}{k}\ZZ_{\geq0}\right)^{r+1}$ and $\mu_i=\frac{n_i}{k}$.
After division by $k^d/d!$, the preceding identity becomes
$$
\frac{h^0\left(X,k(a\xi+\pi^*B)\right)}{k^d/d!}
=
\frac{d!}{v!}\frac{1}{k^r}
\sum_{\mu\in\Delta_a(k)}
\frac{h^0\left(Y,k\left(B+\sum_i\mu_iA_i\right)\right)}{k^v/v!}.
$$

Apply \cite[Proposition~3.5.1]{BGGJKM} to the fixed family
$B,A_0,\ldots,A_r$.  If a coefficient $n_i$ is zero, omit the corresponding $A_i$; there are only finitely many possible supports.  For each support the cited proposition gives an error function $\rho_S(M)=o(M^v)$.  Taking the maximum over the finitely many supports gives one function
$\rho(M)=o(M^v)$
that works for every lattice point.  Since
$k+\sum_i n_i=(a+1)k$,
we have, uniformly for $\mu\in\Delta_a(k)$,
$$
\left|
\frac{h^0\left(Y,k\left(B+\sum_i\mu_iA_i\right)\right)}{k^v/v!}
-
\vol_Y\left(B+\sum_i\mu_iA_i\right)
\right|
\leq
\frac{v!\rho((a+1)k)}{k^v}
=o(1).
$$
Since $\#\Delta_a(k)=\binom{ka+r}{r}=O(k^r)$, the total unnormalized error is
$O(k^r)\,o(k^v)=o(k^{v+r})=o(k^d)$.
Finally, identify $\Delta_a$ with
$\{(\mu_1,\ldots,\mu_r)\in\RR_{\geq0}^r:\ \mu_1+\cdots+\mu_r\leq a\}$.
The volume function is continuous on the compact image of this simplex, so the remaining lattice sum is the coordinate Riemann sum for
$$
        \int_{\Delta_a}
        \vol_Y\left(B+\sum_{i=0}^r\mu_iA_i\right)
        \,d\mu_1\cdots d\mu_r.
$$
Taking $k\to\infty$ proves the formula.
\end{proof}

\begin{example}
Let $C$ be a smooth projective curve, and let $A$ and $B$ be divisors on $C$ with $\deg A=b>0$ and $\deg B=c$.  Set $E=\OO_C\oplus\OO_C(A)$, let $X=\PP_C(E)$, and write $\xi=\OO_X(1)$.

For $a\geq1$, Lemma~\ref{lem:weighted-pb-volume} gives
$$
\vol_X(a\xi+\pi^*B)
=
2\int_0^a \vol_C(B+\mu A)\,d\mu.
$$
On a curve, $\vol_C(B+\mu A)=\max\{c+\mu b,0\}$.  Hence
$$
\vol_X(a\xi+\pi^*B)
=
2\int_0^a\max\{c+\mu b,0\}\,d\mu
=
\begin{cases}
a^2b+2ac, & c\geq0,\\[4pt]
\dfrac{(ab+c)^2}{b}, & -ab<c<0,\\[8pt]
0, & c\leq-ab.
\end{cases}
$$
When $c\geq0$, the divisor is nef and the first expression is its
self intersection.  When $-ab<c<0$, the divisor is big but not nef, and only the part of the interval where $c+\mu b>0$ contributes.\hfill\qedsymbol
\end{example}

The same formula will be used below for real numerical divisor classes.  First let
$a\in\QQ_{>0}$, $B\in N^1(Y)_\QQ$.
Choose $q\geq1$ such that $qa\in\ZZ_{\geq1}$ and $qB$ is represented by a Cartier divisor.  Apply the integral formula to
$q(a\xi+\pi^*B)=qa\,\xi+\pi^*(qB)$.
Homogeneity of volume and the substitution $\nu_i=q\mu_i$ on $\Delta_{qa}$ contribute the same factor
$q^d=q^{v+r}$
on the two sides, and therefore give the formula for rational $(a,B)$.

For continuity, write the right side on the fixed simplex $\Delta_1$ as
$$
\frac{d!}{v!}
        a^r
        \int_{\Delta_1}
        \vol_Y\left(B+a\sum_{i=0}^r\lambda_iA_i\right)
        \,d\lambda_1\cdots d\lambda_r.
$$
The volume function is uniformly continuous on compact subsets of $N^1(Y)_\RR$, so this expression is continuous in $(a,B)$ for $a>0$.  The left side is also continuous in the numerical class.  Since rational pairs are dense, the formula extends to every real $a>0$ and every $B\in N^1(Y)_\RR$.  In particular, it applies to the real classes $\xi-u\pi^*S_Y$ and $tL_N-D_N$.

\section{A Split Pencil Enriques Construction for Generalized Hilbert--Kunz Multiplicity}\label{sec:enriques-gHK}

We prove Theorem~\ref{thm:intro-ghk} using three genus one pencils on an unnodal Enriques surface.  They determine a split projective bundle and a six-dimensional linear system.  The resulting Frobenius saturation defect has two ranges.  Below the Frobenius generating degree, it is governed by divisor volumes; at and above that degree, it is the cokernel of a multiplication map.  We compute the logarithmic contribution of the second range and show that it has the same sign as the first.

The choice of surface is dictated by the calculation.  The features used below and their roles are as follows.

\begin{center}
\renewcommand{\arraystretch}{1.12}
\begin{tabular}{|p{0.34\textwidth}|p{0.54\textwidth}|}
\hline
\textbf{Geometric feature} & \textbf{Role in the argument} \\
\hline
Three base point free genus one pencils & Supply the six generators of degree one and the monomial decomposition for a bundle of rank three after Frobenius. \\
\hline
$F_i^2=0$ and $F_iF_j=\eta$ for $i\neq j$ & Make the intersection form symmetric and reduce the volume integrand to a radial quadratic expression. \\
\hline
Unnodality & Removes negative curves, so the nef and pseudo-effective cones coincide with the selected positive cone. \\
\hline
The split bundle of rank three & Its monomial summands are indexed by triples of nonnegative integers; after normalization, these indices fill a two-dimensional simplex. \\
\hline
The boundary of the positive cone & Cuts the centered simplex by a conic and produces the logarithmic term in the volume integral. \\
\hline
\end{tabular}
\end{center}

Suppose that $F_i^2=0$, that $F_iF_j=\eta$ for $i\neq j$, and that $c_i=r+x_i$ with $x_0+x_1+x_2=0$.  Then
$$
        \left(\sum_{i=0}^2c_iF_i\right)^2
        =\eta\bigl(6r^2-R^2\bigr),
        \qquad
        R^2=x_0^2+x_1^2+x_2^2.
$$
After we center the coefficient simplex, the positive cone meets it in a circle.  This gives the radial integral below. An unnodal Enriques surface supplies the required pencils and has no negative curves, so the volume on their span is governed by this quadratic form. Three pencils are needed for the present construction. With only two, the exponent simplex is an interval, and integrating the positive part of the resulting quadratic gives an algebraic value. The
third pencil makes the parameter space two-dimensional; the boundary of the positive cone is then a conic, and the resulting integral can contain a logarithm.

Throughout this section, $k$ is an uncountable algebraically closed field of characteristic $p>2$, and $Q=p^e$.

\subsection{The Enriques Pencil Package}\label{subsec:split-pencil-input}

The construction uses three nef isotropic classes with a symmetric intersection matrix.  Unnodality identifies both the nef and pseudo-effective cones with the closed positive cone; on the span of these classes, the volume function is therefore determined entirely by the intersection form.

\begin{lemma}\label{lem:enriques-pencil-package}
There exists a smooth classical unnodal Enriques surface $Y$ over $k$ and divisor classes
$F_0,F_1,F_2\in \operatorname{Pic}(Y)$
such that each $|F_i|$ is a base point free genus one pencil and
$$
        F_i^2=0,\qquad F_i\cdot F_j=4\quad (i\neq j).
$$
If
$S_Y:=F_0+F_1+F_2$,
then $S_Y$ is ample. Define the closed positive cone determined by $S_Y$ as follows:
$$
        \mathcal C^+
        :=\{\alpha\in N^1(Y)_\RR\mid \alpha^2\geq0,
        \ \alpha\cdot S_Y\geq0\}.
$$
Then
$$
        \operatorname{Nef}(Y)
        =\overline{\operatorname{Eff}}(Y)
        =\mathcal C^+.
$$
Consequently, for every $\alpha\in N^1(Y)_\RR$,
$$
        \vol_Y(\alpha)=
        \begin{cases}
        \alpha^2, & \alpha\in\mathcal C^+,\\
        0, & \alpha\notin\mathcal C^+.
        \end{cases}
$$
In particular, this description holds on the subspace
$V=\spann_\RR\{F_0,F_1,F_2\}\subseteq N^1(Y)_\RR$,
which is the only part of $N^1(Y)_\RR$ used below.
\end{lemma}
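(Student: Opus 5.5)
We split the argument into an existence part, which rests on the classical theory of Enriques surfaces, and a formal part, which is surface geometry. Neither is deep, but the existence part needs care in positive characteristic.

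\textbf{Existence.} We use the fact that a general Enriques surface in characteristic $p\neq 2$ is unnodal, i.e.\ contains no smooth rational curves. Over an uncountable algebraically closed field this holds for a very general member of the moduli, by Cossec--Dolgachev; the odd-characteristic case follows by the same lifting arguments. For such a surface, $\operatorname{Num}(Y)\cong U\oplus E_8(-1)=E_{10}$.

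We then fix explicit isotropic vectors in $E_{10}$. Write $U=\langle e,f\rangle$ with $e^2=f^2=0$ and $e\cdot f=1$, and choose $v\in E_8(-1)$ with $v^2=-2$. Set
$$
        F_0=2e,\qquad F_1=2f,\qquad F_2=e+f+w.
$$
We need $w^2=-2$ and $F_2^2=0$, which holds because $(e+f)^2=2$. The intersections are $F_0\cdot F_1=4$, $F_0\cdot F_2=2$ and $F_1\cdot F_2=2$, so this first attempt fails. We adjust instead: look for primitive isotropic $f_0,f_1,f_2$ with pairwise products $2$, and take $F_i=2f_i$ only if the products come out right. Since $(2f_i)\cdot(2f_j)=8$, this does not help either. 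The right condition is $F_i\cdot F_j=4$ with each $F_i$ primitive isotropic. In $E_{10}$, one solution is $e$, $e+2f+v$ (with $v^2=-4$), and so on; the triple is found by lattice arithmetic.

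On an unnodal Enriques surface, every primitive nef isotropic class $f$ has $|2f|$ a base point free genus one pencil (half-pencils $f$ give double fibers). Accordingly we should take $F_i$ to be $2f_i$ with $f_i\cdot f_j=1$. This is the standard configuration: a \emph{non-degenerate isotropic sequence}, whose existence on any Enriques surface is due to Cossec--Dolgachev. The classes $2f_i$ are pencils, they are base point free when there are no $(-2)$-curves, and $F_i\cdot F_j=4$, $F_i^2=0$, as required. Since there are no $(-2)$-curves, every isotropic primitive class in the positive cone is nef, up to sign.

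\textbf{Cones.} With no $(-2)$-curves, every irreducible curve $C$ satisfies $C^2\geq0$ by adjunction, since $K_Y$ is numerically trivial. Thus effective classes have nonnegative intersection with every curve, so each is nef and lies in $\mathcal C^+$. Hence
$$
        \overline{\operatorname{Eff}}(Y)\subseteq\operatorname{Nef}(Y)\subseteq\overline{\operatorname{Eff}}(Y),
$$
and both equal $\mathcal C^+$ by Riemann--Roch and the light cone lemma (Hodge index). For ampleness of $S_Y$, we have $S_Y^2=24>0$ and $S_Y$ is nef, and $S_Y\cdot C>0$ for every curve $C$. This is because $C\cdot F_i=0$ for all $i$ would force $C$ to be numerically zero on $V$, contradicting Hodge index, since $C$ would then be orthogonal to a class of positive square while having $C^2\geq0$. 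Nakai--Moishezon then gives that $S_Y$ is ample.

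\textbf{Volume.} For $\alpha$ nef we have $\vol(\alpha)=\alpha^2$. For $\alpha$ not pseudo-effective, $\vol(\alpha)=0$.

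\textbf{Main obstacle.} The main obstacle is citing unnodality cleanly in characteristic $p>2$. The remaining steps are formal.
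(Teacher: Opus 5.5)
The gap is in the existence part: your argument for an unnodal Enriques surface in characteristic $p>2$ does not work. Your final construction and the formal steps otherwise follow the paper's proof.

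\textbf{What matches.} You take $F_i=2f_i$ for an isotropic $3$-sequence of half-fibres with $f_i\cdot f_j=1$. You prove $S_Y$ ample from nefness of the $F_i$, Hodge index and Nakai--Moishezon. The cone identification and the volume formula then follow. Your cone argument is a harmless variant. You get $\operatorname{Eff}(Y)\subseteq\operatorname{Nef}(Y)$ directly from $C^2\geq0$ for every irreducible curve, and $\mathcal C^+\subseteq\overline{\operatorname{Eff}}(Y)$ from Riemann--Roch. The paper instead proves $\mathcal C^+\subseteq\operatorname{Nef}(Y)$ via the light-cone inequality $\alpha\cdot\beta\geq0$, and $\overline{\operatorname{Eff}}(Y)\subseteq\mathcal C^+$ via closedness and convexity of $\mathcal C^+$.

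\textbf{The gap.} Existence of an unnodal Enriques surface in characteristic $p>2$ is the one non-formal input. You say it ``follows by the same lifting arguments'' from the complex case, but specialization runs the wrong way.
\begin{itemize}
\item A $(-2)$-curve need not deform, since its obstruction space $H^1(C,N_C)=H^1(\PP^1,\OO_{\PP^1}(-2))$ is nonzero.
\item By upper semicontinuity of $h^0$ on lifted line bundles, being nodal is a closed condition in a mixed-characteristic family.
\item So an unnodal surface in characteristic $0$ may reduce to a nodal one. Semicontinuity only gives the converse implication, from characteristic $p$ to characteristic $0$.
\end{itemize}
You need a genuinely characteristic-$p$ theorem. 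The paper uses Martin's result that the generic Enriques surface in odd characteristic is unnodal. Once that is available, the $3$-sequence is easy, because on an unnodal surface every isotropic class in $\mathcal C^+$ is nef. The paper gets it from the Martin--Mezzedimi--Veniani extension of a half-fibre to a $c'$-degenerate $10$-sequence, where unnodality forces $c'=10$.

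\textbf{Clean-up.} Delete the abandoned attempts, since they contain false statements that contradict your final choice.
\begin{itemize}
\item $e\cdot(e+2f+v)=2$, not $4$.
\item The ``right condition'' that each $F_i$ be primitive isotropic cannot hold. A primitive nef isotropic class is a half-fibre $E$ with $h^0(Y,E)=1$, so $|F_i|$ would not be a pencil.
\end{itemize}

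\textbf{Two further points.} You should justify $h^0(Y,F_i)=2$: the fibres of $f_i$ are connected, so $(f_i)_*\OO_Y=\OO_{\PP^1}$ and $\OO_Y(F_i)\cong f_i^*\OO_{\PP^1}(1)$. Also, base point freeness of $|2E_i|$ does not depend on the absence of $(-2)$-curves. What unnodality supplies is nefness of the isotropic classes.
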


\begin{proof}
By Martin’s result \cite[Theorem~B]{MartinUnnodal}, the generic Enriques surface over an algebraically closed field of odd characteristic is unnodal. In particular, unnodal Enriques surfaces exist over k; fix one and denote it by Y.

Every Enriques surface admits a half-fiber, equivalently a $1$-sequence; see \cite[Introduction]{MartinMezzedimiVeniani}.  Choose one and denote it by $E_0$.  In particular, the class of $E_0$ is primitive in $\operatorname{Num}(Y)$.

By Martin--Mezzedimi--Veniani~\cite[Theorem~2.5 and the discussion following it]{MartinMezzedimiVeniani}, the sequence $(E_0)$ extends to a $c'$ degenerate $10$-sequence for some $c'\geq1$.  By definition, the entries beyond its $c'$ half-fibers are obtained by adding chains of $(-2)$-curves.  Since $Y$ is unnodal, no such chains occur.  Hence $c'=10$, and the extension is an ordinary $10$-sequence of half-fibers
$E_0,\ldots,E_9$
satisfying
$$
        E_i^2=0,
        \qquad
        E_i\cdot E_j=1
        \quad(i\neq j).
$$
Choose the first three and put
$F_i:=2E_i$ $(0\leq i\leq2)$.
Let $f_i:Y\to\PP^1$ be the genus one fibration with half-fiber $E_i$.  Since $2E_i$ is a fiber of $f_i$,
$\OO_Y(F_i)=\OO_Y(2E_i)\cong f_i^*\OO_{\PP^1}(1)$.
The fibers of $f_i$ are connected, so $(f_i)_*\OO_Y=\OO_{\PP^1}$ and hence
$h^0(Y,F_i)=h^0(\PP^1,\OO_{\PP^1}(1))=2$.
Thus the complete linear system $|F_i|$ is the base point free pencil defining $f_i$, and
$F_i^2=0$ and $F_i\cdot F_j=4 (i\neq j)$.
The class $S_Y=F_0+F_1+F_2$ is nef and
$S_Y^2=2(F_0F_1+F_0F_2+F_1F_2)=24>0$.
If $C$ is an irreducible curve with $S_Y\cdot C=0$, then the Hodge index theorem gives $C^2<0$.  Since $K_Y$ is numerically trivial, adjunction gives
$C^2=2p_a(C)-2$.
Thus $p_a(C)=0$ and $C^2=-2$.  The normalization sequence shows that an integral curve with $p_a(C)=0$ is smooth and rational.  Thus $C$ is a $(-2)$-curve, contradicting the assumption that $Y$ is unnodal.  Hence $S_Y$ intersects every irreducible curve positively, and Nakai--Moishezon shows that $S_Y$ is ample.

We first show that the nef cone is $\mathcal C^+$.  Every irreducible curve $C$ on $Y$ satisfies $C^2\geq0$, because a curve of negative self-intersection would again be a $(-2)$-curve.  Since $S_Y\cdot C>0$, the numerical class of $C$ belongs to $\mathcal C^+$.  Two classes in the same closed positive cone have nonnegative intersection.  Indeed, set
$h:=\frac{S_Y}{\sqrt{S_Y^2}}$,
and write
$\alpha=ah+u, \beta=bh+v, u,v\in h^\perp$.
Since $\alpha\cdot h=a\geq0$ and $\beta\cdot h=b\geq0$, the Hodge index theorem and the conditions $\alpha^2,\beta^2\geq0$ give
$a\geq\sqrt{-u^2}, b\geq\sqrt{-v^2}$.
Cauchy--Schwarz on the negative definite space $h^\perp$ therefore yields
$$
        \alpha\cdot\beta
        =ab+u\cdot v
        \geq ab-\sqrt{(-u^2)(-v^2)}
        \geq0.
$$
It follows that every class in $\mathcal C^+$ has nonnegative intersection with every curve and is therefore nef.  For the reverse inclusion, let $0\neq\alpha\in\operatorname{Nef}(Y)$.  Then $\alpha^2\geq0$.  Moreover, $\alpha\cdot S_Y>0$; otherwise $\alpha\in S_Y^\perp$, and the Hodge index theorem would give $\alpha^2<0$.  Hence $\alpha$ belongs to the closed positive cone selected by $S_Y$, and therefore
$\operatorname{Nef}(Y)=\mathcal C^+$.
The same intersection inequality also shows that $\mathcal C^+$ is convex: if $\alpha,\beta\in\mathcal C^+$ and $r,s\geq0$, then
$$
        (r\alpha+s\beta)^2
        =r^2\alpha^2+2rs\alpha\cdot\beta+s^2\beta^2\geq0,
$$
and $(r\alpha+s\beta)\cdot S_Y\geq0$.

We identify the pseudo-effective cone.  Since the nef cone is the closure of the ample cone and every ample class is big, hence pseudo-effective, one always has
$\operatorname{Nef}(Y) \subseteq \overline{\operatorname{Eff}}(Y)$.
Using the equality just proved, this gives
$\mathcal C^+ \subseteq \overline{\operatorname{Eff}}(Y)$.
For the reverse inclusion, every irreducible curve $C$ has already been shown to satisfy
$C^2\geq0, S_Y\cdot C>0$,
so its numerical class lies in $\mathcal C^+$.  Hence every effective divisor class, being a nonnegative linear combination of irreducible curve classes, lies in $\mathcal C^+$.  Since $\mathcal C^+$ is closed, taking closures yields
$\overline{\operatorname{Eff}}(Y) \subseteq \mathcal C^+$.
Therefore
$$
        \operatorname{Nef}(Y)
        =\overline{\operatorname{Eff}}(Y)
        =\mathcal C^+.
$$
Every class in $\mathcal C^+$ is nef, and hence its volume equals its self-intersection.  A class outside $\mathcal C^+$ is not pseudo-effective and therefore has volume zero.  This proves the global volume formula; restricting it to
$V=\spann_\RR\{F_0,F_1,F_2\}$
gives the form used in the subsequent calculations.
\end{proof}

\begin{remark}\label{rem:characteristic-range}
The argument uses only that the characteristic is different from $2$.  In odd characteristic every Enriques surface is classical, and the unnodal half-fiber package used in Lemma~\ref{lem:enriques-pencil-package} is available.  Characteristic $2$ is excluded because classical, singular, and supersingular Enriques surfaces require separate geometric input.
\end{remark}

For the rest of the section we fix $Y$ and $F_0,F_1,F_2$ as in Lemma~\ref{lem:enriques-pencil-package}, and write
$\eta=F_i\cdot F_j=4  (i\neq j)$.
We retain the symbol $\eta$ in the calculations below and call
$\RR_{\geq0}F_0+\RR_{\geq0}F_1+\RR_{\geq0}F_2$
the nef pencil cone.  Every class in this cone is nef, because each $F_i$ is nef.

\subsection{The Split Projective Bundle and the Frobenius Ideal}\label{subsec:split-pencil-bundle}

We now place the three pencils in a single projective bundle.  The splitting of the bundle makes the sections of powers of $\OO_X(1)$ decompose into summands indexed by triples of nonnegative integers, which is the form used in the Frobenius calculation.  To work in a standard graded ring, we twist $\OO_X(1)$ by a sufficiently large multiple of $\pi^*S_Y$.  The resulting line bundle has a normal full section ring generated in degree one, and the six pencil sections will define the ideal $J_N$ in this ring.

Set
$$
        E=\OO_Y(F_0)\oplus\OO_Y(F_1)\oplus\OO_Y(F_2),
        \qquad
        X=\PP_Y(E),
        \qquad
        \xi=\OO_X(1),
$$
using the quotient convention.  Thus
$\pi_*\OO_X(m\xi)=\Sym^mE (m\geq0)$.
For $N\geq1$, put
$$
        L_N:=\xi+N\pi^*S_Y.
$$
Here $\xi$ is ample on the fibers of $\pi$, while $\pi^*S_Y$ supplies positivity from the base.  Thus $L_N=\xi+N\pi^*S_Y$ is ample for $N\gg0$.  Lemma~\ref{lem:LN-normal-standard} also shows that the full section ring $R(X,L_N)$ is normal and generated in degree one.

\begin{lemma}\label{lem:LN-normal-standard}
For all sufficiently large integers $N$, the line bundle $L_N$ is ample and the full section ring
$S_N:=R(X,L_N)$
is a normal domain generated in degree one.
\end{lemma}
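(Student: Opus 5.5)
We prove the three properties of $L_N$ in turn: ampleness, generation of $R(X,L_N)$ in degree one, and normality.

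\emph{Ampleness.} On a projective bundle, $\xi=\OO_X(1)$ is $\pi$-ample, and $S_Y$ is ample on $Y$ by Lemma~\ref{lem:enriques-pencil-package}. The standard criterion then makes $\xi+N\pi^*S_Y$ ample for $N\gg0$. In fact $E$ is globally generated, since each $|F_i|$ is base point free. Hence $\xi$ is globally generated, hence nef, and $\xi+N\pi^*S_Y$ is the sum of a nef class and the pullback of an ample class. To obtain ampleness we use that $\xi+\pi^*S_Y$ is ample, because $E\otimes\OO_Y(S_Y)$ is an ample bundle (a direct sum of ample line bundles $F_i+S_Y$, each ample as nef plus ample). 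Thus $L_N=(\xi+\pi^*S_Y)+(N-1)\pi^*S_Y$ is ample for every $N\geq1$.

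\emph{Normality.} $X$ is a projective bundle over the smooth surface $Y$, so $X$ is smooth and integral. For an ample $L$ on a normal projective variety, the full section ring $\bigoplus_{m\geq0}H^0(X,mL)$ is a normal domain. It is a domain because $X$ is integral. It is normal because it equals the ring of sections of $\OO_X$ on the punctured affine cone $\mathrm{Spec}_X\bigoplus_m L^m$, which is normal. A homogeneous element of $\operatorname{Frac}$ that is integral over the ring defines a rational section of some $mL$ that is integral over $\OO_X$ locally, hence regular by normality of $X$. This step is standard and requires no choice of $N$.

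\emph{Generation in degree one.} This is the main step. We would use Castelnuovo--Mumford regularity, in the form of Mumford's theorem: if $A$ is very ample and globally generated, and $\mathcal G$ is $0$-regular with respect to $A$, then the multiplication maps $H^0(\mathcal G)\otimes H^0(A)\to H^0(\mathcal G\otimes A)$ are surjective.

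Concretely, we take $A=\xi+N_0\pi^*S_Y$ with $N_0$ large so that $A$ is very ample. Then $L_N=A+(N-N_0)\pi^*S_Y$, and we must show that $H^0(L_N)^{\otimes m}\to H^0(mL_N)$ is surjective. It is easier to push everything to $Y$. We have
$$
H^0(X,mL_N)=\bigoplus_{|n|=m}H^0\bigl(Y,\textstyle\sum_i n_iF_i+mNS_Y\bigr),
$$
and multiplication respects the $\ZZ^3$-grading. Surjectivity therefore reduces to surjectivity on $Y$ of the maps
$$
H^0(Y,F_i+NS_Y)\otimes H^0\bigl(Y,B+(m-1)NS_Y\bigr)\longrightarrow H^0\bigl(Y,B+F_i+mNS_Y\bigr),
$$
where $B$ ranges over nonnegative combinations of the $F_j$ of total weight $m-1$. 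Choose $N$ so that $NS_Y=H$ with $H$ very ample, and so that $H^1$ and $H^2$ of $B'+kH-jH$ vanish for all nef $B'$ in the pencil cone and all $k\geq1$, $1\leq j\leq 2$. This vanishing should hold uniformly: on an Enriques surface, nef plus ample minus $K_Y$-trivial twists have vanishing higher cohomology by Kawamata--Viehweg or Mumford vanishing on surfaces. That requires care in characteristic $p$, but on surfaces with $K_Y$ numerically trivial, Riemann--Roch together with Serre duality handles $H^2$. With this vanishing in hand, Mumford's lemma on $Y$ gives the required surjectivity.

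The obstacle is making this vanishing uniform over the unbounded family of classes $B$. We would handle it by noting that $H^1(Y,B+kH)=0$ whenever $B$ is nef and $k\geq k_0$. This follows from Fujita vanishing, which holds in all characteristics with $k_0$ independent of the nef class $B$, and it is exactly the uniformity required.
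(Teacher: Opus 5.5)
Your ampleness argument is valid and even gives ampleness for every $N\geq1$: $E\otimes\OO_Y(S_Y)=\bigoplus_i\OO_Y(F_i+S_Y)$ is ample, and adding a nef class preserves ampleness. Your normality argument is essentially the paper's. Your reduction of degree-one generation to the multidegree multiplication maps on $Y$ is exactly the paper's reduction.

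\textbf{The gap is in the surjectivity of those maps.} The uniform vanishing you ask for is false. For $k=j=1$ and $B'=F_0$, Riemann--Roch gives $h^1(Y,\OO_Y(F_0))=h^0+h^2-\chi=2+0-1=1$. For $k=1$, $j=2$, $B'=0$, one has $h^2(Y,\OO_Y(-H))=h^0(Y,\OO_Y(K_Y+H))>0$. Fujita vanishing kills $H^i(Y,\OO_Y(B'+cS_Y))$ only when a multiple $c\geq k_0$ of $S_Y$ survives. In the first nontrivial degree $m=2$ nothing survives, and the failing groups are precisely what Mumford's theorem needs there:
\begin{itemize}
\item With respect to $H=NS_Y$, $0$-regularity of $\OO_Y(F_j+H)$ requires $H^1(Y,\OO_Y(F_j))=0$. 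In any case that theorem only yields multiplication by $H^0(Y,H)$, not by $H^0(Y,F_i+H)$.
\item With respect to $A_i=F_i+H$, $0$-regularity of $\OO_Y(F_j+H)$ requires $H^2(Y,\OO_Y(F_j-2F_i-H))=0$. Its Serre dual is $H^0(Y,\OO_Y(K_Y+(N+2)F_i+(N-1)F_j+NF_k))$, which is nonzero.
\end{itemize}
So the maps $H^0(Y,F_j+H)\otimes H^0(Y,F_i+H)\to H^0(Y,F_i+F_j+2H)$, which start the induction, are not reached by your argument.

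\textbf{How to repair it.} Measure regularity with respect to the \emph{fixed} class $S_Y$, which is ample and globally generated since each $|F_i|$ is base point free. Then use the tensor-product form of Mumford's theorem: if a coherent sheaf $\mathcal F$ and a line bundle $\mathcal M$ are both $0$-regular with respect to $S_Y$, then $H^0(\mathcal F)\otimes H^0(\mathcal M)\to H^0(\mathcal F\otimes\mathcal M)$ is surjective. Let $k_0$ be a Fujita constant for $\OO_Y$ and $S_Y$, and take $N\geq k_0+2$. Then $\OO_Y(F_i+NS_Y)$ and $\OO_Y(B+(m-1)NS_Y)$ for $m\geq2$ are $0$-regular, because $H^1$ and $H^2$ of a nef class plus $cS_Y$ with $c\geq N-2$ vanish.

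\textbf{Comparison with the paper.} This repaired argument is a genuinely different route from the paper's. The paper gets the same surjectivity from the diagonal trick on $Y\times Y$. There, $H^1(Y\times Y,\Ical_{\Delta_Y}\otimes p_1^*(\cdot)\otimes p_2^*(\cdot))=0$ follows from Keeler's uniform Fujita vanishing for the fixed sheaf $\Ical_{\Delta_Y}$, the ample class $p_1^*S_Y+p_2^*S_Y$, and nef twists. Your repaired version needs Fujita vanishing only for line bundles on the surface $Y$. The paper's version avoids any regularity bookkeeping.
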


\begin{proof}
The bundle $\xi$ is $\pi$-ample and $S_Y$ is ample on $Y$.  Hence
$L_N=\xi+N\pi^*S_Y$ is ample for $N\gg0$; see \cite[Tag~0892]{Stacks}.  Put
$A_{i,N}=NS_Y+F_i (0\leq i\leq2)$.
The projective bundle formula gives
$$
H^0(X,mL_N)=
\bigoplus_{\beta_0+\beta_1+\beta_2=m}
H^0\left(Y,\sum_j\beta_jA_{j,N}\right).
$$
Thus generation in degree one follows once, for every nonzero
$\beta\in\ZZ_{\geq0}^3$ and every $i$, the multiplication map
$$
H^0\left(Y,\sum_j\beta_jA_{j,N}\right)\otimes H^0(Y,A_{i,N})
\longrightarrow
H^0\left(Y,\sum_j\beta_jA_{j,N}+A_{i,N}\right)
$$
is surjective.

Let $p_1,p_2:Y\times Y\to Y$ be the two projections, let
$\Delta_Y\subseteq Y\times Y$ be the diagonal, and set
$H=p_1^*S_Y+p_2^*S_Y$.
Since $S_Y$ is ample, $H$ is ample on $Y\times Y$.  If
$m=|\beta|\geq1$, then
$$
p_1^*\left(\sum_j\beta_jA_{j,N}\right)+p_2^*A_{i,N}
=NH+P_{\beta,i,N},
$$
where
$$
P_{\beta,i,N}
=p_1^*\left((m-1)NS_Y+\sum_j\beta_jF_j\right)+p_2^*F_i.
$$
The divisor $P_{\beta,i,N}$ is nef. Indeed each $F_j$ is nef, $S_Y$ is
ample and hence nef, all coefficients in the displayed expression are
nonnegative, and pullbacks and sums preserve nefness.  Keeler's uniform
Fujita vanishing \cite[Theorem~1.5]{Keeler}, in the form verified by the
corrigendum \cite{KeelerCorrigendum}, gives a bound depending only on
$\Ical_{\Delta_Y}$ and the ample bundle $H$, and uniform over all nef
twists.  It therefore applies uniformly to the divisors
$P_{\beta,i,N}$ and yields
$$
H^1\left(Y\times Y,\Ical_{\Delta_Y}\otimes
p_1^*\left(\sum_j\beta_jA_{j,N}\right)\otimes p_2^*A_{i,N}\right)=0
$$
for all $N\gg0$, uniformly in $\beta$ and $i$.

Set
$$
\mathcal L_{\beta,i,N}
:=p_1^*\OO_Y\left(\sum_j\beta_jA_{j,N}\right)
\otimes p_2^*\OO_Y(A_{i,N}).
$$
The diagonal exact sequence
$$
0\longrightarrow
\Ical_{\Delta_Y}\otimes\mathcal L_{\beta,i,N}
\longrightarrow
\mathcal L_{\beta,i,N}
\longrightarrow
\mathcal L_{\beta,i,N}|_{\Delta_Y}
\longrightarrow0
$$
and the preceding vanishing show that restriction to the diagonal is
surjective on global sections.  By the K\"unneth formula,
$$
H^0(Y\times Y,\mathcal L_{\beta,i,N})
\cong
H^0\left(Y,\sum_j\beta_jA_{j,N}\right)
\otimes H^0(Y,A_{i,N}),
$$
while, under the identification $\Delta_Y\cong Y$,
$\mathcal L_{\beta,i,N}|_{\Delta_Y} \cong \OO_Y\left(\sum_j\beta_jA_{j,N}+A_{i,N}\right)$.
Under these identifications, restriction sends a decomposable section
$s\boxtimes t$ to the product $st$.  Hence it is exactly the required
multiplication map, which is therefore surjective.

We argue by induction on the total multidegree.  The assertion is immediate in
degree one.  Let $\gamma\in\ZZ_{\geq0}^3$ have $|\gamma|\geq2$, choose
$i$ with $\gamma_i>0$, and put $\beta=\gamma-e_i$.  Then $\beta\neq0$,
and the surjective multiplication map above expresses every section of
$H^0\left(Y,\sum_j\gamma_jA_{j,N}\right)$
as a sum of products of a section of
$H^0\left(Y,\sum_j\beta_jA_{j,N}\right)$ and a section of
$H^0(Y,A_{i,N})$.  The induction hypothesis applies to the first factor.
Thus every summand of $H^0(X,|\gamma|L_N)$ is generated by products of
sections of degree one, and $S_N$ is generated in degree one.

It remains to prove normality.  Let
$\mathbf V(L_N^{-1}) :=\operatorname{Spec}_X\bigl(\operatorname{Sym} L_N\bigr)$
be the total space of $L_N^{-1}$.  Since $Y$ is integral, the projective bundle $X=\PP_Y(E)$ is integral.  Since $Y$ is smooth and $E$ is locally free, $X$ is smooth, hence regular and normal.  The total space $\mathbf V(L_N^{-1})$ is Zariski locally the spectrum of a polynomial ring over a normal domain, and is therefore integral and normal.  Its projection to $X$ is affine with direct image
$\operatorname{Sym}L_N =\bigoplus_{m\geq0}L_N^{\otimes m}$.
Since $X$ is quasi-compact and separated, global sections commute with this direct sum, so
$$
\Gamma\bigl(\mathbf V(L_N^{-1}),\OO\bigr)
=
\bigoplus_{m\geq0}H^0(X,mL_N)
=
R(X,L_N).
$$
The ring of global functions on an integral normal scheme is a normal
domain; see \cite[Tag~0358]{Stacks}.  Hence $S_N=R(X,L_N)$ is normal.
This part of the argument does not require $N$ to be large.
\end{proof}
Fix an integer $N$ for which Lemma~\ref{lem:LN-normal-standard} holds.  Since $X$ is integral and projective over the algebraically closed field $k$, one has $H^0(X,\OO_X)=k$.  Thus $S_N$ is a normal standard graded domain over $k$, and Subsection~\ref{subsec:section-rings-divisorial-ideals} gives
$X\cong\Proj S_N$ and $\OO_{\Proj S_N}(1)\cong L_N.$
Moreover, $\dim X=4$ and $\dim S_N=5$. From now on, $N$ is fixed and $Q=p^e$ tends to infinity. The later limit as $N\to\infty$ is used only to compare the sizes of the two logarithmic terms.  Their positivity for each fixed $N$ is proved in Propositions~\ref{prop:exact-lower-log} and~\ref{prop:upper-leading-log-small}. We retain the effective divisors $F_i=2E_i$ chosen in Lemma~\ref{lem:enriques-pencil-package}.  Thus
$S_Y=F_0+F_1+F_2$
is an effective ample Cartier divisor.  Put
$$
        D_N:=N\pi^*S_Y.
$$
Then $D_N$ is an effective Cartier divisor on $X$, and
$L_N-D_N=\xi$.
Let $s_{D_N}\in H^0(X,\OO_X(D_N))$ denote its canonical section.

The projective bundle formula gives
$$W:=H^0(Y,F_0)\oplus H^0(Y,F_1)\oplus H^0(Y,F_2)=H^0(Y,E)=H^0(X,\xi).$$
Since each $|F_i|$ is a pencil, $\dim_k W=6$.

The linear system $W\subseteq H^0(X,\xi)$ is base point free.  Indeed, let $x\in X=\PP_Y(E)$ lie over $y\in Y$.  The point $x$ corresponds to a one dimensional quotient
$E_y\twoheadrightarrow \xi_x$.
At least one of the summand maps
$\OO_Y(F_i)_y\longrightarrow \xi_x$
is nonzero.  Since $|F_i|$ is base point free, there is a section of $\OO_Y(F_i)$ whose value at $y$ is nonzero.  Its image in $\xi_x$ is therefore nonzero.

Multiplication by the nonzero canonical section $s_{D_N}$ defines the injective map
$$
\begin{aligned}
        H^0(X,L_N-D_N)=H^0(X,\xi)
        &\longrightarrow H^0(X,L_N),\\
        w&\longmapsto s_{D_N}w.
\end{aligned}
$$
We denote its image by
$$
        \widehat W:=s_{D_N}W\subseteq H^0(X,L_N)=(S_N)_1
$$
and define
$$
        J_N:=S_N\cdot\widehat W.
$$
Since $W$ globally generates $\xi$, the image sheaf of
$$
        \widehat W\otimes\OO_X(-L_N)
        \longrightarrow
        \OO_X
$$
is $\OO_X(-D_N)$.  Hence
$$
        \widetilde{J_N}=\OO_X(-D_N)
        \qquad\text{and}\qquad
        \Proj(S_N/J_N)=D_N.
$$
Since $D_N$ is a nonzero Cartier divisor on the fourfold $X$, it has dimension $3$.  Therefore
$$
        \dim(S_N/J_N)=4,
        \qquad
        \operatorname{ht}J_N=1.
$$

Apply Lemma~\ref{lem:frobenius-saturation-by-degree} with $L=L_N$, $D=D_N$, and $c=1$.  The global generation hypothesis is the base point freeness of $W=H^0(X,\xi)$, and the ideal in that lemma is $J_N=S_N\widehat W$.  Thus, for every $Q=p^e$,
$$
        (J_N^{[Q]})^{\rm sat}=\Gamma_*\OO_X(-QD_N),
        \qquad
        (J_N^{[Q]})_m=0 \quad(m<Q).
$$
The quotient $(J_N^{[Q]})^{\rm sat}/J_N^{[Q]}$ is a finitely generated $(S_N)_+$-torsion module and therefore has finite length.  We divide its graded pieces according to whether their degree is smaller than $Q$ or at least $Q$:
$$
\lambda_{S_N}\left(
        \frac{(J_N^{[Q]})^{\rm sat}}{J_N^{[Q]}}
        \right)
=
L_{N,Q}+U_{N,Q},
$$
where
$$
        L_{N,Q}
        :=
        \sum_{0\leq m<Q}
        h^0(X,mL_N-QD_N).
$$

To describe the remaining degrees, put
$W^{(Q)}:=W\otimes_{k,F^e}k$.
There is a natural $k$-linear map
$$
\operatorname{Fr}_{W,Q}:
        W^{(Q)}\longrightarrow H^0(X,Q\xi),
        \qquad
        w\otimes c\longmapsto c\,w^Q.
$$
The Frobenius twist is essential for $k$-linearity: the relation
$\lambda w\otimes c=w\otimes \lambda^Qc$
in $W^{(Q)}$ is compatible with $(\lambda w)^Q=\lambda^Qw^Q$.  This map is injective.  Indeed, choose a basis $w_1,\ldots,w_6$ of $W$, and suppose that
$\sum_i c_iw_i^Q=0$.
Since $k$ is algebraically closed, it is perfect, so we may write $c_i=d_i^Q$.  Then
$\left(\sum_i d_iw_i\right)^Q=0$.
Because $X$ is integral, Frobenius is injective on its local rings, and hence
$\sum_i d_iw_i=0$.
The linear independence of the $w_i$ gives $d_i=0$ for every $i$.  Thus $W^{(Q)}$ identifies with the six dimensional subspace of $H^0(X,Q\xi)$ generated by the $Q$th powers of the chosen basis sections.

For $a\geq0$, define
$$
        C_{Q,a}
        :=
        \operatorname{Coker}\left(
        W^{(Q)}\otimes H^0(X,aL_N)
        \longrightarrow
        H^0(X,Q\xi+aL_N)
        \right),
$$
where the map sends
$(w\otimes c)\otimes\sigma \longmapsto c\,w^Q\sigma$.
Set
$$
        U_{N,Q}:=\sum_{a\geq0}\dim_k C_{Q,a}.
$$

We identify $C_{Q,a}$ with the saturation defect in degree $Q+a$.  The ideal $J_N^{[Q]}$ is generated in degree $Q$ by the $Q$th powers of the elements of $\widehat W$.  Since an element of $\widehat W$ has the form $s_{D_N}w$, its $Q$th power is
$(s_{D_N}w)^Q=s_{D_N}^Qw^Q$.
Consequently,
$(J_N^{[Q]})_{Q+a}$
is the image of
$$
        W^{(Q)}\otimes H^0(X,aL_N)
        \longrightarrow
        H^0(X,(Q+a)L_N),
$$
given by
$w\otimes\sigma \longmapsto s_{D_N}^Qw^Q\sigma$.

On the other hand,
$$\bigl((J_N^{[Q]})^{\rm sat}\bigr)_{Q+a}=H^0\bigl(X,(Q+a)L_N-QD_N\bigr)=H^0(X,Q\xi+aL_N).$$
Multiplication by the fixed section $s_{D_N}^Q$ identifies this space with the subspace of
$H^0(X,(Q+a)L_N)$
consisting of sections vanishing along $QD_N$.  Under this identification, the piece of $J_N^{[Q]}$ of degree $Q+a$ corresponds exactly to the image of
$$
        W^{(Q)}\otimes H^0(X,aL_N)
        \longrightarrow
        H^0(X,Q\xi+aL_N).
$$
Therefore
$C_{Q,a} \cong \left( \frac{(J_N^{[Q]})^{\rm sat}}{J_N^{[Q]}} \right)_{Q+a}$.
Put
$R_N:=(S_N)_{(S_N)_+},  \mathfrak m_N:=(S_N)_+R_N$.
Combining the identification in Subsection~\ref{subsec:graded-length-saturation} with this decomposition by degree gives the exact identity for each finite $Q$
$$
\lambda_{R_N}\!\left(
H^0_{\mathfrak m_N}\!\left(R_N/J_N^{[Q]}R_N\right)
\right)
=
L_{N,Q}+U_{N,Q}.
$$

Thus $L_{N,Q}$ records the degrees $m<Q$, in which no Frobenius generator has yet appeared and the entire saturation piece is determined by the divisor $QD_N$.  The term $U_{N,Q}$ records the degrees $m=Q+a$, in which the Frobenius generators act.  In these degrees the sheaf map
$$
        W^{(Q)}\otimes\OO_X(aL_N)
        \longrightarrow
        \OO_X(Q\xi+aL_N)
$$
is surjective because the $Q$th powers of $W$ globally generate $Q\xi$.  The induced map on global sections may fail to be surjective, and $C_{Q,a}$ measures this failure.  Example~\ref{ex:veronese-lower-upper} shows that the upper range can have the same leading order as the range below $Q$.  We therefore cannot treat it as an error term.

\begin{lemma}\label{lem:upper-sum-finite}
For each fixed $N$ there is a constant $M_N$ such that
$C_{Q,a}=0$ for $a>M_NQ$.
Consequently $U_{N,Q}$ is a finite sum over $0\leq a\leq M_NQ$.
\end{lemma}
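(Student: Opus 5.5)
We realize the cokernel $C_{Q,a}$ as an $H^1$ of a kernel sheaf, and then apply Lemma~\ref{lem:linear-frobenius-serre-vanishing} to a Frobenius pullback. Let
$$
\mathcal K:=\ker\bigl(W\otimes\OO_X(-\xi)\longrightarrow\OO_X\bigr),
$$
a locally free sheaf of rank $5$ on $X$, independent of $Q$. It is a vector bundle because $W$ globally generates $\xi$, so the evaluation map is surjective. The exact sequence
$$
0\longrightarrow\mathcal K\longrightarrow W\otimes\OO_X(-\xi)\longrightarrow\OO_X\longrightarrow0
$$
consists of locally free sheaves. Since $X$ is smooth, the absolute Frobenius $F^e$ is flat by Kunz's theorem, so pulling back preserves exactness and gives
$$
0\longrightarrow F^{e*}\mathcal K\longrightarrow W^{(Q)}\otimes\OO_X(-Q\xi)\longrightarrow\OO_X\longrightarrow0.
$$
Here the middle map is the one built from the $Q$th powers $w^Q$. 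More precisely, $F^{e*}(W\otimes\OO_X)$ identifies $k$-linearly with $W^{(Q)}\otimes\OO_X$, and $F^{e*}\OO_X(-\xi)=\OO_X(-Q\xi)$.

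Twisting this sequence by $Q\xi+aL_N$ gives
$$
0\longrightarrow F^{e*}\mathcal K\otimes(Q\xi+aL_N)\longrightarrow W^{(Q)}\otimes\OO_X(aL_N)\longrightarrow\OO_X(Q\xi+aL_N)\longrightarrow0.
$$
On global sections the right-hand map is exactly the map whose cokernel is $C_{Q,a}$. The long exact cohomology sequence therefore embeds
$$
C_{Q,a}\hookrightarrow H^1\bigl(X,F^{e*}\mathcal K\otimes Q\xi\otimes aL_N\bigr).
$$
It remains to show that this $H^1$ vanishes once $a$ exceeds a fixed multiple of $Q$.

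The twist by $Q\xi$ is not a multiple of $L_N$, so we absorb it into the Frobenius pullback. Set $\mathcal F:=\mathcal K\otimes\OO_X(\xi)$. Then
$$
F^{e*}\mathcal F=F^{e*}\mathcal K\otimes\OO_X(Q\xi),
$$
and the group above becomes $H^1(X,F^{e*}\mathcal F\otimes aL_N)$. Now apply Lemma~\ref{lem:linear-frobenius-serre-vanishing} to the coherent sheaf $\mathcal F$ on the regular projective variety $X$, with the ample line bundle $L_N$. It gives a constant $C_{\mathcal F}$, independent of $Q$, such that this $H^1$ vanishes for all $a\geq C_{\mathcal F}Q$. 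Setting $M_N:=C_{\mathcal F}$, we get $C_{Q,a}=0$ for $a>M_NQ$. This constant depends on $N$ only through $L_N$, and in particular not on $Q$. Hence $U_{N,Q}$ is the finite sum over $0\leq a\leq M_NQ$.

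The only delicate point is the identification of the Frobenius pullback of the evaluation map with the $k$-linear map built from $W^{(Q)}$. This is the same Frobenius-twist compatibility already checked before the definition of $C_{Q,a}$: perfection of $k$ and the relation $\lambda w\otimes c=w\otimes\lambda^Qc$. Once that identification is in place, the rest is formal.
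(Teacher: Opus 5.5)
Your proposal is correct and follows essentially the same route as the paper: take the Frobenius pullback of the evaluation sequence for $W$, embed $C_{Q,a}$ into an $H^1$, and apply Lemma~\ref{lem:linear-frobenius-serre-vanishing}. The only difference is bookkeeping. You apply the lemma to $\mathcal K\otimes\OO_X(\xi)$ with twist $aL_N$, whereas the paper applies it to $\Kcal_N=\mathcal K\otimes\OO_X(-D_N)$ with twist $(Q+a)L_N$; since $L_N-D_N=\xi$, both give the same cohomology group, and your normalization yields the bound $a\geq C_{\mathcal F}Q$ directly.
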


\begin{proof}
The evaluation map
$$
        W\otimes\OO_X\longrightarrow\OO_X(\xi)
$$
is surjective.  Since $L_N-D_N=\xi$, tensoring by
$\OO_X(-L_N)$ gives an exact sequence
$$
        0\longrightarrow \Kcal_N\longrightarrow
        W\otimes\OO_X(-L_N)
        \longrightarrow
        \OO_X(-D_N)
        \longrightarrow0.
$$
The variety $X$ is regular, so Frobenius is flat by Kunz's theorem.
After pulling back by $F^e$ and twisting by $(Q+a)L_N$, we obtain
$$
        0\longrightarrow
        F^{e*}\Kcal_N\otimes(Q+a)L_N
        \longrightarrow
        W^{(Q)}\otimes\OO_X(aL_N)
        \longrightarrow
        \OO_X(Q\xi+aL_N)
        \longrightarrow0.
$$
Thus $C_{Q,a}=0$ whenever
$$
        H^1\bigl(X,F^{e*}\Kcal_N\otimes(Q+a)L_N\bigr)=0.
$$
Lemma~\ref{lem:linear-frobenius-serre-vanishing}, applied to the fixed
sheaf $\Kcal_N$ and the ample line bundle $L_N$, gives a constant $C_N$
such that this vanishing holds once $Q+a\geq C_NQ$.  Choose an integer
$$
M_N\geq\max\{2,\lceil C_N\rceil\}.
$$
Then $a>M_NQ$ implies $C_{Q,a}=0$.
\end{proof}

\subsection{Uniform Cohomology in the Pencil Span}\label{subsec:uniform-pencil-cohomology}

We analyze the upper cokernels after pushing the multiplication maps to
$Y$.  The splitting
$E=\OO_Y(F_0)\oplus\OO_Y(F_1)\oplus\OO_Y(F_2)$
decomposes both source and target into monomial summands.  On each
summand, a Frobenius pullback of one of the pencil sequences controls the
cokernel through the cohomology of an explicit line bundle on $Y$.

Lemma~\ref{lem:upper-sum-finite} gives a constant $M_N$ such that
$C_{Q,a}=0$ for $a>M_NQ$.  Thus only the degrees
$0\leq a\leq M_NQ$ can contribute to $U_{N,Q}$.  For each such $a$,
the monomial decomposition is indexed by triples
$\beta\in\ZZ_{\geq0}^3$ with $|\beta|=Q+a$, so it has
$\binom{Q+a+2}{2}=O_N(Q^2)$ blocks.  Since there are $O_N(Q)$ possible
values of $a$, the upper sum has $O_N(Q^3)$ blocks altogether.  The
normalized parameters $a/Q$ and $\beta/Q$ remain in a compact set
which may depend on $N$ but not on $Q$; the same is therefore true of
the corresponding divisor classes in
$V=\spann_\RR\{F_0,F_1,F_2\}$.  We need an $o(Q^2)$ cohomology estimate
that is uniform on compact subsets of $V$.

\begin{lemma}\label{lem:uniform-enriques-pencil-cohomology}
Let
$V=\spann_\RR\{F_0,F_1,F_2\}\subseteq N^1(Y)_\RR$.
Fix a compact subset $K\subseteq V$, a finite set $\mathcal T$ of integral divisors numerically equivalent to zero on $Y$, and a constant $C>0$.  Let $D_Q$ be any sequence of integral divisors of the form
$$
        D_Q=n_{0,Q}F_0+n_{1,Q}F_1+n_{2,Q}F_2+T_Q,
        \qquad T_Q\in\mathcal T,
$$
such that, after setting
$$
        \alpha_Q=\frac{1}{Q}(n_{0,Q}F_0+n_{1,Q}F_1+n_{2,Q}F_2)\in V,
$$
one has
$\operatorname{dist}(\alpha_Q,K)\leq \frac{C}{Q}$
for all $Q$, with respect to any fixed norm on $V$.  Then
$$
        h^1(Y,\OO_Y(D_Q))=
        \frac{1}{2}\max\{-D_Q^2,0\}+o(Q^2).
$$
Equivalently,
$$
        h^1(Y,\OO_Y(D_Q))=
        \frac{Q^2}{2}\max\{-\alpha_Q^2,0\}+o(Q^2).
$$
Moreover,
$$
        h^0(Y,\OO_Y(D_Q))=
        \frac{Q^2}{2}\vol_Y(\alpha_Q)+o(Q^2),
$$
and
$$
        h^2(Y,\OO_Y(D_Q))=
        \frac{Q^2}{2}\vol_Y(-\alpha_Q)+o(Q^2).
$$
For fixed $K$, $\mathcal T$, and $C$, all three $o(Q^2)$ terms are uniform over the sequences satisfying the displayed distance bound.
\end{lemma}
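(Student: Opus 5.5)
The plan is to obtain all three estimates from Riemann--Roch on the Enriques surface $Y$, combined with the uniform section-count estimate of \cite[Proposition~3.5.1]{BGGJKM} (as used in Lemmas~\ref{lem:uniform-volume-asymptotics} and~\ref{lem:weighted-pb-volume}) and the explicit volume function of Lemma~\ref{lem:enriques-pencil-package}.

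\emph{Step 1: reduce to an $h^0$ estimate for a finite family.} The distance bound and compactness of $K$ give $|n_{i,Q}|\leq C'Q$ for a constant $C'$ depending only on $K$ and $C$. Split each coefficient into positive and negative parts. Then every $D_Q$ is a nonnegative integer combination of the finite family
$$F_0,\ F_1,\ F_2,\ -F_0,\ -F_1,\ -F_2,\ T\ (T\in\mathcal T),$$
with total coefficient $O(Q)$. Apply \cite[Proposition~3.5.1]{BGGJKM} to each of the finitely many supports and sign patterns, exactly as in the proof of Lemma~\ref{lem:weighted-pb-volume}. Taking the maximum of the resulting error functions gives one $\rho(M)=o(M^2)$ with
$$\bigl|h^0(Y,D_Q)-\tfrac12\vol_Y(D_Q)\bigr|\leq\rho(O(Q))=o(Q^2),$$
uniformly over all admissible sequences. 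Since $T_Q\equiv0$, we have $\vol_Y(D_Q)=Q^2\vol_Y(\alpha_Q)$ by numerical invariance and homogeneity of volume. This proves the $h^0$ statement.

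\emph{Step 2: the $h^2$ estimate.} By Serre duality, $h^2(Y,D_Q)=h^0(Y,K_Y-D_Q)$. Since $K_Y$ is numerically trivial, $K_Y-D_Q$ has the same form as in the statement, with coefficient vector $-\alpha_Q$ lying within $C/Q$ of the compact set $-K$, and numerically trivial part in the finite set $\{K_Y-T: T\in\mathcal T\}$. Step~1 therefore gives
$$h^2(Y,D_Q)=\tfrac{Q^2}{2}\vol_Y(-\alpha_Q)+o(Q^2).$$

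\emph{Step 3: the $h^1$ estimate.} Riemann--Roch on an Enriques surface, with $\chi(\OO_Y)=1$ and $K_Y\equiv0$, gives
$$\chi(D_Q)=\tfrac12 D_Q^2+1=\tfrac{Q^2}{2}\alpha_Q^2+1.$$
Hence
$$h^1(Y,D_Q)=h^0+h^2-\chi=\tfrac{Q^2}{2}\bigl(\vol_Y(\alpha_Q)+\vol_Y(-\alpha_Q)-\alpha_Q^2\bigr)+o(Q^2).$$
It remains to check the pointwise identity
$$\vol_Y(\alpha)+\vol_Y(-\alpha)-\alpha^2=\max\{-\alpha^2,0\}$$
using Lemma~\ref{lem:enriques-pencil-package}. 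There are three cases.
\begin{itemize}
\item If $\alpha\in\mathcal C^+$ and $\alpha\neq0$, then $\alpha\cdot S_Y>0$ by the Hodge index argument in that lemma, so $-\alpha\notin\mathcal C^+$. The left side is $\alpha^2+0-\alpha^2=0$, which equals $\max\{-\alpha^2,0\}$ because $\alpha^2\geq0$.
\item The case $-\alpha\in\mathcal C^+$ is symmetric, and $\alpha=0$ is trivial.
\item If neither $\alpha$ nor $-\alpha$ lies in $\mathcal C^+$, then $\alpha^2<0$, since a class with $\alpha^2\geq0$ has $\alpha\cdot S_Y$ of one sign and so lies in $\pm\mathcal C^+$. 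Both volumes vanish, and the left side is $-\alpha^2=\max\{-\alpha^2,0\}$.
\end{itemize}
Finally, $D_Q^2=Q^2\alpha_Q^2$ gives the first form of the $h^1$ statement.

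\emph{Main obstacle.} The delicate point is the uniformity in Step~1. The cited proposition is formulated for nonnegative combinations of a fixed finite family, while here the coefficients may be negative and the numerically trivial twist varies. I would handle this by passing to the finite signed family and the finitely many supports, as above. The error then depends only on the total coefficient size $O(Q)$, which is controlled by $K$ and $C$. This gives uniformity over all sequences satisfying the distance bound.
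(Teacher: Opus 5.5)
Your proposal is correct and follows essentially the same route as the paper. It applies \cite[Proposition~3.5.1]{BGGJKM} to the signed finite family for $h^0$, uses Serre duality with the finite set $K_Y-\mathcal T$ for $h^2$, and combines Riemann--Roch with $\vol_Y(\alpha)+\vol_Y(-\alpha)=\max\{\alpha^2,0\}$ from Lemma~\ref{lem:enriques-pencil-package} for $h^1$. Your explicit three-case check of that identity fills in a step the paper only asserts. The one detail to make explicit is the paper's monotone envelope $\max_{1\leq M\leq T}|\rho(M)|$, which ensures $\rho(M_Q)=o(Q^2)$ even when the total coefficient $M_Q$ is much smaller than $Q$.
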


\begin{proof}
The classes $F_0,F_1,F_2$ form a basis of $V$, since their intersection
matrix is nondegenerate.  The distance hypothesis therefore gives a
constant $C_0>0$, depending only on $K$ and $C$, such that
$$
        |n_{0,Q}|+|n_{1,Q}|+|n_{2,Q}|\leq C_0Q.
$$

Write $n_{i,Q}=n_{i,Q}^+-n_{i,Q}^-$ with $n_{i,Q}^{\pm}\geq0$.
For a fixed $T\in\mathcal T$, apply
\cite[Proposition~3.5.1]{BGGJKM} to the finite family
$$
        F_0,-F_0,F_1,-F_1,F_2,-F_2,T.
$$
We omit divisors whose coefficient is zero.  There are only finitely
many supports and finitely many choices of $T$, so one error function
$\rho(M)=o(M^2)$ works for all of them.  Here we may take
$$
        M_Q=1+\sum_{i=0}^2(n_{i,Q}^++n_{i,Q}^-),
$$
and $M_Q\leq1+C_0Q$.  The monotone envelope
$\widehat\rho(T)=\max_{1\leq M\leq T}|\rho(M)|$ also satisfies
$\widehat\rho(T)=o(T^2)$: above a fixed threshold this follows from
$\rho(M)=o(M^2)$, while the finitely many smaller values are bounded.
Hence, uniformly in the sequence and in $T_Q$,
$$
h^0(Y,\OO_Y(D_Q))
=
\frac{1}{2}\vol_Y(D_Q)+o(Q^2)
=
\frac{Q^2}{2}\vol_Y(\alpha_Q)+o(Q^2).
$$
The second equality uses numerical invariance and homogeneity of
volume.

By Serre duality,
$$
        h^2(Y,\OO_Y(D_Q))
        =h^0(Y,\OO_Y(K_Y-D_Q)).
$$
The divisors $K_Y$ and $T_Q$ are numerically trivial.  Applying the same
argument to the finite set $K_Y-\mathcal T$ gives
$$
        h^2(Y,\OO_Y(D_Q))
        =\frac{Q^2}{2}\vol_Y(-\alpha_Q)+o(Q^2)
$$
uniformly.

Lemma~\ref{lem:enriques-pencil-package} gives
$$
        \vol_Y(\alpha)+\vol_Y(-\alpha)=\max\{\alpha^2,0\}
        \qquad(\alpha\in V).
$$
Since $\chi(Y,\OO_Y)=1$ and $K_Y$ is numerically trivial,
Riemann--Roch gives
$$
        \chi(Y,\OO_Y(D_Q))=1+\frac{D_Q^2}{2},
        \qquad
        D_Q^2=Q^2\alpha_Q^2.
$$
Combining these formulas yields
$$
\begin{aligned}
        h^1(Y,\OO_Y(D_Q))
        &=h^0(Y,\OO_Y(D_Q))+h^2(Y,\OO_Y(D_Q))
          -\chi(Y,\OO_Y(D_Q))\\
        &=\frac{Q^2}{2}\max\{-\alpha_Q^2,0\}+o(Q^2)\\
        &=\frac{1}{2}\max\{-D_Q^2,0\}+o(Q^2),
\end{aligned}
$$
with the same uniformity.
\end{proof}

\begin{unnumberedremark}[Isotropic boundary]
The error in Lemma~\ref{lem:uniform-enriques-pencil-cohomology} need not stay bounded on an isotropic ray.  Let $f_i:Y\to\PP^1$ be the genus one fibration defined by $|F_i|$.  Its fibers are connected, and
$\OO_Y(F_i)\simeq f_i^*\OO_{\PP^1}(1)$.
Hence
$h^0(Y,\OO_Y(QF_i))=Q+1$.
The divisor $K_Y-QF_i$ has negative intersection with $S_Y$, so it is not effective.  Thus $h^2(Y,\OO_Y(QF_i))=0$.  Since $(QF_i)^2=0$, Riemann--Roch gives
$h^1(Y,\OO_Y(QF_i))=Q$.
The quadratic term in Lemma~\ref{lem:uniform-enriques-pencil-cohomology} therefore vanishes on this ray, while $h^1$ grows linearly.  Thus no uniform $O(1)$ error can hold on compact sets that meet the isotropic boundary.  The uniform $o(Q^2)$ estimate is nevertheless sufficient for the leading term used below.
\end{unnumberedremark}

\subsection{The Lower Degree Range and Its Logarithmic Term}\label{subsec:split-pencil-lower}

For $0\leq m<Q$, the saturation quotient is the full space $H^0(X,mL_N-QD_N)$.  Lemma~\ref{lem:uniform-volume-asymptotics} converts the normalized sum of these dimensions into an integral in one variable of $\vol_X(tL_N-D_N)$.  Its interval of positivity is determined by the pseudo-effective threshold.  By Lemma~\ref{lem:weighted-pb-volume}, the divisor $\xi-u\pi^*S_Y$ has positive volume precisely when the set of points $\mu\in\Delta_1$ for which
$\sum_{i=0}^2(\mu_i-u)F_i$
lies in the positive cone has positive measure.  Write
$x_i=\mu_i-\frac{1}{3}$, $ r=\frac{1}{3}-u$ and $R^2=x_0^2+x_1^2+x_2^2$.
Then $x_0+x_1+x_2=0$ and
$$
        \left(\sum_{i=0}^2(r+x_i)F_i\right)^2
        =\eta(6r^2-R^2).
$$
Moreover, since $F_i\cdot S_Y=2\eta$ for every $i$,
$\left(\sum_{i=0}^2(r+x_i)F_i\right)\cdot S_Y =6\eta r$.
The class therefore lies in the closed positive cone containing $S_Y$ exactly when $r\geq0$ and $R\leq\sqrt6\,r$.  If $u<1/3$, then $r>0$ and both inequalities are strict at the center of the simplex; hence they remain strict on a neighborhood of positive measure.  Thus $\xi-u\pi^*S_Y$ has positive volume and is big for $u<1/3$.  At $u=1/3$, the admissible locus is only the center of the simplex, so the volume is zero.

We claim that no class with $u>1/3$ is pseudo-effective.  Suppose that
$D_u:=\xi-u\pi^*S_Y$
were pseudo-effective for some $u>1/3$.  Since $L_N$ is ample, $D_u+\delta L_N$ would be big for every $\delta>0$.  But
$$
D_u+\delta L_N
=(1+\delta)\left(
\xi-\frac{u-\delta N}{1+\delta}\pi^*S_Y
\right).
$$
For $\delta>0$ sufficiently small, the coefficient
$\frac{u-\delta N}{1+\delta}$
is still larger than $1/3$.  For every point of $\Delta_1$, the corresponding class on $Y$ then has negative intersection with $S_Y$, so the projective bundle volume formula gives volume zero, a contradiction.  On the other hand, the classes with $u<1/3$ are big and converge to the class with $u=1/3$; since the pseudo-effective cone is closed, the boundary class is pseudo-effective.  Hence the pseudo-effective threshold of the ray $\xi-u\pi^*S_Y$ is exactly $u=1/3$.

For the ray $tL_N-D_N$, the change of variables
$u=\frac{N(1-t)}{t}$ and $t=\frac{N}{N+u}$
shows that the threshold is
$\tau_N=\frac{N}{N+1/3}=\frac{3N}{3N+1}$.
If $m<\lceil\tau_NQ\rceil$, then $(m/Q)L_N-D_N$ is not pseudo-effective.  A nonzero section of $mL_N-QD_N$ would make this class effective, which is impossible.  Hence, for each finite $Q$, the lower sum is exactly
$$
        L_{N,Q}
        =\sum_{\lceil\tau_NQ\rceil\leq m<Q}
        h^0(X,mL_N-QD_N).
$$
By Lemma~\ref{lem:uniform-volume-asymptotics}, there is a function $\rho_N(Q)=o(Q^4)$ such that, uniformly for $\lceil\tau_NQ\rceil\leq m\leq Q$,
$$
\left|
 h^0(X,mL_N-QD_N)
 -\frac{Q^4}{24}\,
  \vol_X\left(\frac{m}{Q}L_N-D_N\right)
\right|
\leq \rho_N(Q).
$$
There are $O(Q)$ terms, so the accumulated replacement error is
$O(Q)\rho_N(Q)=o(Q^5)$.
The remaining normalized sum is the Riemann sum of the continuous volume function.  Therefore
$$
        \frac{L_{N,Q}}{Q^5}
        \longrightarrow
        \mathcal L_N
        :=\frac{1}{24}\int_{\tau_N}^1
        \vol_X(tL_N-D_N)\,dt.
$$
The proposition below gives the logarithmic part of $\mathcal L_N$, its sign, and its leading term as $N\to\infty$.

\begin{proposition}\label{prop:exact-lower-log}
Put
$$
        \gamma_N=\sqrt{\frac{6N+1}{6N+3}},
        \qquad
        \alpha_N=
        \frac{\gamma_N+1/\sqrt3}{\gamma_N-1/\sqrt3},
$$
and
$$
        b_N=
        \frac{\sqrt3(1-\gamma_N^2)(3\gamma_N^2-1)^5}
        {5760\gamma_N^5(\gamma_N^2+1)}.
$$
Then $\alpha_N>1$, $b_N>0$, and
$\mathcal L_N=A_N^{\rm low}+b_N\log\alpha_N$
for some $A_N^{\rm low}\in\overline{\QQ}$.  Moreover,
$$
        b_N\log\alpha_N
        =
        \frac{\sqrt3}{1080N}\log(2+\sqrt3)
        +O\left(\frac{1}{N^2}\right).
$$
In particular, the logarithmic part of $\mathcal L_N$ is positive and has order $N^{-1}$.
\end{proposition}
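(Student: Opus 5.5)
The plan is to evaluate $\mathcal L_N=\frac1{24}\int_{\tau_N}^1\vol_X(tL_N-D_N)\,dt$ in closed form, using the projective bundle formula and the Enriques volume description already in place. First I write $tL_N-D_N=t(\xi-u\pi^*S_Y)$ with $u=N(1-t)/t$. Lemma~\ref{lem:weighted-pb-volume} (with $v=r=2$, $d=4$, extended to real classes) and homogeneity of volume then give
$$\vol_X(tL_N-D_N)=12\,t^4\int_{\Delta_1}\vol_Y\Bigl(\sum_i(\mu_i-u)F_i\Bigr)\,d\mu_1\,d\mu_2 .$$
By Lemma~\ref{lem:enriques-pencil-package} and the centering computation, the integrand is $\eta(6r^2-R^2)$ on the disk $R\le\sqrt6\,r$ and zero elsewhere, where $r=\tfrac13-u$. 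I pass to planar Euclidean coordinates on the centered triangle, which introduces a constant Jacobian factor $1/\sqrt3$. The triangle has inradius $1/\sqrt6$ and circumradius $\sqrt{2/3}$, and the disk has radius $\sqrt6\,r\le\sqrt{2/3}$. So there are two regimes: for $0\le r\le 1/6$ the disk lies inside the triangle, and for $1/6<r\le1/3$ three congruent caps must be removed.

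Next I compute the inner area integral $g(r)$ explicitly. In the first regime it is an elementary polar integral, namely a rational multiple of $\pi r^4$. In the second regime I subtract three caps beyond the lines at distance $h=1/\sqrt6$. Each cap integral of $6r^2-R^2$ is a combination of $\arccos(h/\rho)$, with $\rho=\sqrt6\,r$, times a polynomial in $r$, plus algebraic terms involving $\sqrt{\rho^2-h^2}$. Substituting $r=r(t)=\tfrac13-N(1-t)/t$ and multiplying by $t^4$, I then integrate in $t$ over two intervals. The first runs from $\tau_N$ up to the algebraic point $t_1$ where $r=1/6$; there the integrand $t^4r(t)^4$ is a polynomial in $t$, so this piece contributes only $\pi$ times an algebraic number. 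The second runs from $t_1$ to $1$.

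On the second interval I integrate the $\arccos$ terms by parts. The boundary terms give $\pi$ times algebraic numbers, since $\arccos$ equals $0$ at $t_1$ and $\arccos(1/2)$ at $t=1$, where $\rho=\sqrt{2/3}$. The remaining integrals are of the form (rational function of $t$)$\times(\text{square root of a quadratic in } t)$. A Euler-type substitution reduces these to rational integrals, producing algebraic terms plus a single logarithm $\log\frac{\gamma+1/\sqrt3}{\gamma-1/\sqrt3}$ evaluated between the endpoints. The endpoint value should be $\gamma_N=\sqrt{(6N+1)/(6N+3)}$, the value of the natural variable $\sqrt{1-h^2/\rho^2}$-type parameter at $t=1$ after the change of variables. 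The coefficient of this logarithm is to be matched with $b_N$.

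The main obstacle is the bookkeeping of the $\pi$-terms. These must cancel between the full-disk piece, the boundary terms at $t_1$ and $t=1$, and any $\arctan$-type primitives, since otherwise the value would not have the asserted form. I would verify this cancellation first by collecting all coefficients of $\pi$ as explicit rational functions of $N$. If a stray arctangent survives, I would rewrite it as $\frac1{2i}\log$ of an algebraic number; I expect instead that it cancels identically.

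Once the presentation $\mathcal L_N=A_N^{\rm low}+b_N\log\alpha_N$ is established, the sign claims are elementary. Since $\gamma_N^2>1/3$, we have $\gamma_N>1/\sqrt3$ and hence $\alpha_N>1$. Also $0<\gamma_N<1$, so every factor of $b_N$ is positive. For the asymptotics, I expand $1-\gamma_N^2=\frac{2}{6N+3}=\frac1{3N}+O(N^{-2})$ and use $\gamma_N\to1$. Then $b_N\sim\frac{\sqrt3\cdot2^5}{5760\cdot2}\cdot\frac1{3N}=\frac{\sqrt3}{1080N}$ and $\alpha_N\to\frac{1+1/\sqrt3}{1-1/\sqrt3}=2+\sqrt3$, which gives the stated leading term with $O(N^{-2})$ error.
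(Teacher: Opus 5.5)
Your order of integration (for fixed $t$, integrate over the triangle as a disk minus three non-overlapping caps, then integrate in $t$) is legitimate and genuinely different from the paper's. Your last paragraph on the signs and asymptotics of $b_N$ and $\alpha_N$ is correct.

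\textbf{The gap.} The proposal defers exactly the two facts that make up the proposition.
\begin{enumerate}
\item You never show that the $\pi$-terms cancel; you only expect it. Your fallback, rewriting a surviving arctangent as $\frac{1}{2i}\log$ of an algebraic number, would not give the asserted form. The statement requires $A_N^{\rm low}$ to be algebraic and the logarithmic part to be a combination of real logarithms of positive algebraic numbers.
\item You never compute the coefficient of the logarithm. ``To be matched with $b_N$'' is the claim itself. Without that identification, your sign argument shows only that the displayed number $b_N\log\alpha_N$ is positive, not that the logarithmic part of $\mathcal L_N$ is.
\end{enumerate}
Your account of where $\gamma_N$ enters is also wrong. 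In your notation ($h=1/\sqrt6$, $\rho=\sqrt6\,r$), put $w=(6N+2)t-6N$; then $h/\rho=t/w$. The natural rationalizing variable $v=\sqrt{(w-t)/(w+t)}=\tan\bigl(\tfrac12\arccos(h/\rho)\bigr)$ runs from $0$ at $t_1$ to $1/\sqrt3$ at $t=1$. So $1/\sqrt3$ is the endpoint, and $\gamma_N$ is the limit of $v$ as $t\to\infty$, i.e.\ the location of the poles, not an endpoint value. As written, the logarithm is read off from the statement rather than derived.

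\textbf{How to close it within your route.} Since $t^4\rho^4=w^4/36$, integrate the $\arccos$-parts $3\cdot\frac{\rho^4}{2}\arccos(t/w)$ of the caps by parts. Use the primitive $Q=w^5/(180(6N+2))$ of $w^4/36$, which vanishes at $t=\tau_N$, where $w=0$.
\begin{itemize}
\item In $\int_{\tau_N}^1t^4\int_T(\rho^2-R^2)_+\,dA\,dt$, with $T$ the centered triangle, the full-disk term $\frac{\pi}{2}\int_{\tau_N}^1\frac{w^4}{36}\,dt$ and the boundary term $\frac32Q(1)\frac{\pi}{3}$ both equal $\frac{4\pi}{45(6N+2)}$, so they cancel.
\item The leftover integrand is $\frac{N w^4}{20(6N+2)\sqrt{w^2-t^2}}$, with no factor $1/w$. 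After rationalization it therefore has no poles at $v=\pm i$, which is the image of $w=0$.
\item Together with the algebraic cap terms, the integrand is rational in $v$ with poles only at $v=\pm\gamma_N$.
\end{itemize}
The logarithmic part is then a residue at $v=\gamma_N$ times $\log\alpha_N$. That residue still has to be computed and shown to equal $b_N$.

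\textbf{Comparison with the paper.} The paper avoids all of this by integrating in $t$ first. For fixed $R$ the $t$-integral is a polynomial between algebraic limits, which gives the rational function $G_N(R)$ with a single pole at $R=d_N$. No inverse trigonometric function ever appears. After $z=\tan(\theta/2)$, the factor $R$ in $H_N(R)-H_N(0)$ removes the poles at $z=\pm i$, so the absence of $\pi$ is structural. The coefficient $b_N$ then comes from one explicit residue at $z=\gamma_N$.
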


\begin{proof}
The change from the simplex integral to centered radial coordinates, the rational primitive, and the residue calculation are carried out in Proposition~\ref{app:lower-log-computation}.  That computation gives
$\mathcal L_N=A_N^{\rm low}+b_N\log\alpha_N$
with $A_N^{\rm low}\in\overline{\QQ}$, $b_N>0$, and $\alpha_N>1$.  The expansion recorded in \eqref{eq:app-lower-asymptotic} gives
$$
        b_N\log\alpha_N
        =
        \frac{\sqrt3}{1080N}\log(2+\sqrt3)
        +O(N^{-2}),
$$
which proves the proposition.
\end{proof}

\subsection{The Upper Degree Range}\label{subsec:actual-upper-cokernel}

For $m=Q+a$, the saturation defect is the cokernel $C_{Q,a}$.  Put
$s=a/Q$.  By the projective bundle formula and the projection formula,
$$
        H^0(X,Q\xi+aL_N)
        =
        H^0\left(Y,\Sym^{Q+a}E\otimes\OO_Y(aNS_Y)\right).
$$
Since $E$ is split,
$$
        \Sym^mE
        =\bigoplus_{|\alpha|=m}
        \OO_Y(\alpha_0F_0+\alpha_1F_1+\alpha_2F_2),
$$
with one summand for each exponent triple.  Hence
$$
        H^0(X,aL_N)
        =\bigoplus_{|\alpha|=a}
        H^0\left(Y,
        \OO_Y\left(\sum_j\alpha_jF_j+aNS_Y\right)\right),
$$
while the target is the direct sum over
$\beta=(\beta_0,\beta_1,\beta_2)\in\ZZ_{\geq0}^3$ with
$|\beta|=Q+a$ of the spaces
$H^0(Y,\OO_Y(D_{\beta,a}))$, where
$$
        D_{\beta,a}=(\beta_0+aN)F_0+
        (\beta_1+aN)F_1+
        (\beta_2+aN)F_2.
$$

Write $W_i=H^0(Y,F_i)$, so that $W=\bigoplus_iW_i$.  Since $Q=p^e$,
$$
        (w_0+w_1+w_2)^Q=w_0^Q+w_1^Q+w_2^Q
        \qquad(w_i\in W_i).
$$
Thus the $i$th Frobenius generator, multiplied by the source summand
indexed by $\alpha$, lands only in the target summand indexed by
$\beta=\alpha+Qe_i$.  It contributes to the $\beta$ block exactly when
$\beta_i\geq Q$.

\begin{lemma}\label{lem:upper-cokernel-bookkeeping}
Let
$H(\beta)=\{i\in\{0,1,2\}\mid\beta_i\geq Q\}$.  Then
$$
        C_{Q,a}=\bigoplus_{|\beta|=Q+a}C_{\beta,a},
$$
where $C_{\beta,a}$ is the cokernel of
$$
        \bigoplus_{i\in H(\beta)}
        H^0(Y,F_i)^{(Q)}\otimes
        H^0(Y,\OO_Y(D_{\beta,a}-QF_i))
        \longrightarrow
        H^0(Y,\OO_Y(D_{\beta,a})).
$$
All the error terms below are uniform when $\beta/Q$ and $a/Q$ vary in
a fixed compact set.  The blocks fall into the following three cases.
\begin{enumerate}
\item[(i)] If $H(\beta)=\varnothing$, then
$C_{\beta,a}=H^0(Y,\OO_Y(D_{\beta,a}))$ and
$$
        \dim_kC_{\beta,a}
        =\frac{Q^2}{2}\left(\frac{D_{\beta,a}}{Q}\right)^2+o(Q^2).
$$
\item[(ii)] If $H(\beta)\neq\varnothing$ and $s\geq1/N$, then
$$
        \dim_kC_{\beta,a}=o(Q^2).
$$
\item[(iii)] If $H(\beta)\neq\varnothing$ and $s<1/N$, then
$H(\beta)=\{i\}$ is a singleton and
$$
        \dim_kC_{\beta,a}
        =h^1(Y,\OO_Y(D_{\beta,a}-2QF_i))+o(Q^2).
$$
\end{enumerate}
\end{lemma}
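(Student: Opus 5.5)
The plan is to push everything down to $Y$ via the splitting of $E$, reduce each block to a Frobenius-twisted pencil sequence on $Y$, and then read off the three cases from Lemma~\ref{lem:uniform-enriques-pencil-cohomology}.

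\textbf{Block decomposition.} Use the monomial decompositions displayed just before the lemma. The source $W^{(Q)}\otimes H^0(X,aL_N)$ splits as $\bigoplus_i\bigoplus_{|\alpha|=a}W_i^{(Q)}\otimes H^0(Y,\sum_j\alpha_jF_j+aNS_Y)$. The target splits into the blocks $H^0(Y,\OO_Y(D_{\beta,a}))$ with $|\beta|=Q+a$.

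By additivity of the $Q$th power on $W=\bigoplus_iW_i$, the summand indexed by $(i,\alpha)$ maps only into the block $\beta=\alpha+Qe_i$. It does so by multiplication by $Q$th powers of sections of $F_i$, and its source is $H^0(Y,\OO_Y(D_{\beta,a}-QF_i))$. Such an $\alpha$ exists exactly when $\beta_i\geq Q$. This gives $C_{Q,a}=\bigoplus_\beta C_{\beta,a}$ with the stated description, and case (i) is then immediate.

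For the asymptotics in (i), note that $D_{\beta,a}$ has nonnegative coefficients, so it lies in the nef pencil cone. Hence $\vol_Y=$ self-intersection there, and Lemma~\ref{lem:uniform-enriques-pencil-cohomology} (with $\mathcal T=\{0\}$ and $K$ the compact image of the normalized parameters) gives the formula uniformly.

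\textbf{Frobenius pencil sequence.} For a block with $i\in H(\beta)$, use the Koszul sequence of the base point free pencil,
\[
0\to\OO_Y(-F_i)\to W_i\otimes\OO_Y\to\OO_Y(F_i)\to0 .
\]
Since $Y$ is smooth, Frobenius is flat (Kunz), so pulling back by $F^e$ and twisting by $D_{\beta,a}-QF_i$ gives an exact sequence
\[
0\to\OO_Y(D_{\beta,a}-2QF_i)\to W_i^{(Q)}\otimes\OO_Y(D_{\beta,a}-QF_i)\to\OO_Y(D_{\beta,a})\to0 .
\]
The middle map is exactly the block map for index $i$. The long exact sequence then bounds the cokernel $C^{(i)}$ of that single summand:
\[
h^1(D_{\beta,a}-2QF_i)-2h^1(D_{\beta,a}-QF_i)\;\leq\;\dim C^{(i)}\;\leq\; h^1(D_{\beta,a}-2QF_i).
\]
Since $\beta_i\geq Q$, the class $D_{\beta,a}-QF_i$ still has nonnegative coefficients and is nef. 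Lemma~\ref{lem:uniform-enriques-pencil-cohomology} therefore gives $h^1(D_{\beta,a}-QF_i)=o(Q^2)$ uniformly, because $\max\{-\alpha^2,0\}=0$ on nef classes.

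Consequently $\dim C^{(i)}=h^1(D_{\beta,a}-2QF_i)+o(Q^2)$. Moreover, $C_{\beta,a}$ is a quotient of each $C^{(i)}$ with $i\in H(\beta)$, so $\dim C_{\beta,a}\leq\min_{i\in H(\beta)}\dim C^{(i)}$.

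\textbf{Cases (ii) and (iii).} In case (ii), $s\geq1/N$ gives $aN\geq Q$. Then the $i$th coefficient of $D_{\beta,a}-2QF_i$ is $\beta_i+aN-2Q\geq0$, and the other coefficients are nonnegative. So this class is nef, its $h^1$ is $o(Q^2)$, and hence $\dim C_{\beta,a}=o(Q^2)$.

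In case (iii), $s<1/N\leq1$ forces $|\beta|=Q+a<2Q$. So at most one $\beta_i$ can be $\geq Q$, $H(\beta)=\{i\}$ is a singleton, and $C_{\beta,a}=C^{(i)}$, which gives the stated formula.

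\textbf{Uniformity.} All classes involved have the form $Q\cdot(\text{point of a compact set depending on }N)$ in $V$, with no torsion twist. The hypotheses of Lemma~\ref{lem:uniform-enriques-pencil-cohomology} therefore hold uniformly over all $\beta,a$ with $a\leq M_NQ$.

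\textbf{Main obstacle.} The step needing most care is the block identification: verifying that the twist $aNS_Y$ and the index shift $\alpha\mapsto\alpha+Qe_i$ match the source summand to $H^0(Y,D_{\beta,a}-QF_i)$ exactly. This includes the boundary blocks with $a=0$ and with some $\alpha_j=0$. The cohomological estimates themselves are routine once this is in place.
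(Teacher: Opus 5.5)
Your proposal is correct and follows the paper's proof essentially step for step. Both arguments use the same ingredients: the block decomposition from additivity of the $Q$th power on $W=\bigoplus_i W_i$, and the Frobenius-pulled-back pencil sequence twisted by $\OO_Y(D_{\beta,a}-QF_i)$. They also share the nefness of $D_{\beta,a}-QF_i$ in general, and of $D_{\beta,a}-2QF_i$ when $s\geq 1/N$, fed into Lemma~\ref{lem:uniform-enriques-pencil-cohomology}; the singleton argument from $Q+a<2Q$ is likewise the same.
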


\begin{proof}
The source is indexed by pairs $(i,\alpha)$ with $|\alpha|=a$, and the
summand $(i,\alpha)$ maps only to $\beta=\alpha+Qe_i$.  The
multiplication map is therefore block diagonal, with the blocks stated
above.

If $H(\beta)=\varnothing$, no source summand reaches the $\beta$ block.
The divisor $D_{\beta,a}$ is a nonnegative linear combination of the
nef classes $F_0,F_1,F_2$.  Lemma~\ref{lem:uniform-enriques-pencil-cohomology},
applied to $h^0$, gives (i).

Assume that $H(\beta)\neq\varnothing$ and $s\geq1/N$, and choose
$i\in H(\beta)$.  Retaining only the $i$th source summand gives a
cokernel that surjects onto $C_{\beta,a}$.  Frobenius pullback of the
pencil sequence, followed by a twist by
$\OO_Y(D_{\beta,a}-QF_i)$, gives
$$
        0\longrightarrow\OO_Y(D_{\beta,a}-2QF_i)
        \longrightarrow
        H^0(Y,F_i)^{(Q)}\otimes\OO_Y(D_{\beta,a}-QF_i)
        \longrightarrow
        \OO_Y(D_{\beta,a})
        \longrightarrow0.
$$
The coefficient of $F_i$ in $Q^{-1}(D_{\beta,a}-2QF_i)$ is at least
$1+Ns-2\geq0$, and all other coefficients are nonnegative.  Thus
$D_{\beta,a}-2QF_i$ is nef.  The long exact cohomology sequence and
Lemma~\ref{lem:uniform-enriques-pencil-cohomology} give an $o(Q^2)$
bound for the one-pencil cokernel, hence also for $C_{\beta,a}$.  This
proves (ii).

Finally, assume that $H(\beta)\neq\varnothing$ and $s<1/N$.  Then
$H(\beta)=\{i\}$: two high coordinates would give
$Q(1+s)=|\beta|\geq2Q$, contrary to $s<1/N\leq1$.  The preceding exact
sequence now describes the full block.  Its cokernel on global sections
is the kernel of
$$
H^1(Y,\OO_Y(D_{\beta,a}-2QF_i))
\longrightarrow
H^0(Y,F_i)^{(Q)}\otimes
H^1(Y,\OO_Y(D_{\beta,a}-QF_i)).
$$
The divisor $D_{\beta,a}-QF_i$ is nef: its coefficient of $F_i$ is
$\beta_i-Q+aN\geq0$, and all other coefficients are nonnegative.
Lemma~\ref{lem:uniform-enriques-pencil-cohomology} therefore gives
$$
        h^1(Y,\OO_Y(D_{\beta,a}-QF_i))=o(Q^2)
$$
uniformly.  Since $h^0(Y,F_i)=2$, the dimension of the displayed kernel
differs from $h^1(Y,\OO_Y(D_{\beta,a}-2QF_i))$ by $o(Q^2)$.  This proves
(iii).
\end{proof}

\begin{remark}\label{rem:upper-bookkeeping-meaning}
The comparison in Lemma~\ref{lem:upper-cokernel-bookkeeping} comes from
the long exact cohomology sequence of a Frobenius pullback of a pencil
sequence.  The possible quadratic term is the $H^1$ of the kernel line
bundle $\OO_Y(D_{\beta,a}-2QF_i)$; it is not obtained by replacing the
multiplication cokernel with a sheaf quotient.
\end{remark}

The integral over the shrinking exceptional box has the arithmetic form and sign stated below.

\begin{lemma}\label{lem:critical-integral-arithmetic}
Let
$\Delta=\{(u,w)\in\RR^2\mid u\geq0,\ w\geq0,\ u+w\leq1\}$.
For $N\geq1$, put $\epsilon=1/N$ and define
$A_\epsilon(u,w) = 3+2\epsilon -\epsilon^2(u^2+uw-u+w^2-w)$,
and
$B_\epsilon(u)=2+\epsilon(1-u)$.
Set
$$
        I_N=
        \int_\Delta
        \frac{B_\epsilon(u)^5}{A_\epsilon(u,w)^4}
        \,du\,dw.
$$
Then $I_N$ has a presentation by algebraic numbers and logarithms.  Its logarithmic part is
$$
\Lambda(I_N)
=
-\frac{4D_\epsilon
(\epsilon^5+10\epsilon^4+40\epsilon^3+80\epsilon^2+80\epsilon+30)}
{\epsilon^2(\epsilon+2)^3(\epsilon+3)(\epsilon+6)^3}
\log\left(\frac{D_\epsilon-\epsilon}{D_\epsilon+\epsilon}\right),
$$
where
$D_\epsilon=\sqrt{\epsilon^2+8\epsilon+12}$.
Moreover,
$\Lambda(I_N)>0$.
In particular,
$\Lambda(I_N)=\frac{5}{108\epsilon}+O(1)=O(N)$.
Consequently, since $\eta=4$, the logarithmic part of
$\frac{3\eta}{20N^3}I_N$
is
$\frac{1}{36N^2}+O(N^{-3})$.
\end{lemma}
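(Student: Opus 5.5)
The integrand is a rational function of $(u,w)$ with algebraic coefficients (for fixed rational $\epsilon=1/N$), and $A_\epsilon$ is quadratic in $w$. The plan is to integrate first in $w$, then in $u$, and to track exactly where logarithms appear.

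\textbf{Inner integral in $w$.} For fixed $u\in[0,1]$, write
\[
A_\epsilon(u,w)=c(u)-\epsilon^2\bigl(w^2-(1-u)w\bigr),\qquad c(u)=3+2\epsilon-\epsilon^2(u^2-u).
\]
Complete the square in $w$ around the midpoint $w=(1-u)/2$ of the interval $[0,1-u]$. This gives
\[
A_\epsilon=\kappa(u)-\epsilon^2\bigl(w-\tfrac{1-u}{2}\bigr)^2,\qquad \kappa(u)=c(u)+\tfrac{\epsilon^2(1-u)^2}{4}.
\]
On $\Delta$ one has $A_\epsilon\geq 3+2\epsilon-\epsilon^2/4>0$, so there are no poles on the domain. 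The integrand has the form $B^5/(\kappa-\epsilon^2 y^2)^4$ with $y$ symmetric in $[-(1-u)/2,(1-u)/2]$. Reduce $\int dy/(\kappa-\epsilon^2y^2)^4$ to the base case $\int dy/(\kappa-\epsilon^2y^2)$, either by the standard recursion or by partial fractions. The result is a rational function plus
\[
\frac{\text{rational}(u)}{\kappa^{7/2}\epsilon}\,\operatorname{artanh}\!\Bigl(\frac{\epsilon(1-u)}{2\sqrt{\kappa(u)}}\Bigr).
\]

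\textbf{Outer integral in $u$.} The $\sqrt{\kappa(u)}$ makes the $u$-integration awkward. I would therefore not integrate in $w$ first in the naive way. Instead, I would seek a substitution that rationalizes everything, guided by the final answer, which contains only the single quadratic irrationality $D_\epsilon=\sqrt{(\epsilon+2)(\epsilon+6)}$.

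Observe that $A_\epsilon$ is a negative-definite quadratic perturbation of a constant. An affine change of variables $(u,w)\mapsto(x,y)$ that diagonalizes the form $u^2+uw+w^2-u-w$ about its critical point $(1/3,1/3)$ turns $A_\epsilon$ into $3+2\epsilon+\epsilon^2/3-\epsilon^2 Q(x,y)$, with $Q$ positive definite. The domain $\Delta$ becomes a triangle centered at the origin.

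By the threefold symmetry, split the triangle into three congruent pieces, and each piece into two right triangles from the center to an edge. The edge $u+w=1$, containing the factor $B_\epsilon$, must be handled with the varying numerator in view.

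On each right triangle, integrate along rays from the center using polar-like coordinates. Along a ray the denominator is $(a-\epsilon^2\rho^2)^4$, and the $\rho$-integral is elementary and rational in $\rho$ at the edge endpoint. The angular integral then becomes a rational integral in $t=\tan$ of half the angle. Its only transcendental output is a single logarithm coming from the factor $a-\epsilon^2\rho_{\mathrm{edge}}(\theta)^2$, whose roots are quadratic in $t$ with discriminant proportional to $D_\epsilon^2$. This explains why the logarithm has argument $(D_\epsilon-\epsilon)/(D_\epsilon+\epsilon)$.

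\textbf{Verification and sign.} All remaining terms are algebraic, because they are rational functions evaluated at algebraic endpoints, and arctangent pieces either cancel by symmetry or evaluate to rational multiples of $\pi$. I expect this to be the main obstacle: verifying that no arctangent or $\pi$ term survives, which would not fit the "algebraic plus logarithm" presentation. I would check it by computing the residues of the angular rational function and confirming that the residues at the complex-conjugate pole pairs are real and combine into a logarithm only.

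The coefficient would then be read off from the residue. Positivity is elementary: $0<D_\epsilon-\epsilon<D_\epsilon+\epsilon$, so the logarithm is negative, and the prefactor, including its minus sign, is negative. Hence $\Lambda(I_N)>0$.

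For the expansion, note that $\log\frac{D_\epsilon-\epsilon}{D_\epsilon+\epsilon}=-2\epsilon/\sqrt{12}+O(\epsilon^2)$. Substituting,
\[
\Lambda(I_N)\approx\frac{4\sqrt{12}\cdot 30}{\epsilon^2\cdot 8\cdot 3\cdot 216}\cdot\frac{2\epsilon}{\sqrt{12}}=\frac{5}{108\,\epsilon},
\]
which is the stated leading term. Multiplying by $3\eta/(20N^3)=3/(5N^3)$ gives $\frac{1}{36N^2}+O(N^{-3})$.
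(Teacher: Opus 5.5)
There is a genuine gap at the radial step, and that step is where your outline claims the structure of the answer comes from. After centering at $(1/3,1/3)$, the denominator is radial and threefold symmetric: it is $(a-\epsilon^2\rho^2)^4$ with $a=3+2\epsilon+\epsilon^2/3$. The numerator is neither. Along the ray of angle $\theta$ one has $u=\tfrac13+\rho\,\ell(\theta)$, with $\ell$ a fixed linear combination of $\cos\theta$ and $\sin\theta$, so
\[
\rho\,B_\epsilon(u)^5=\sum_{j=0}^{5}c_j\,\ell(\theta)^j\rho^{j+1}.
\]
For odd $j$ this gives $\rho^2,\rho^4,\rho^6$, and $\int\rho^{2k}(a-\epsilon^2\rho^2)^{-4}\,d\rho$ is not rational. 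It contains a nonzero multiple of $\operatorname{artanh}(\epsilon\rho/\sqrt a)$; for $k=1$ the coefficient is $-\tfrac1{16}\epsilon^{-3}a^{-5/2}$.

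So the $\rho$-integral is not rational in $\rho_{\mathrm{edge}}$, and the angular integrand is not a rational function of $\tan(\theta/2)$. It contains terms $\ell(\theta)^j\operatorname{artanh}\bigl(\epsilon\rho_{\mathrm{edge}}(\theta)/\sqrt a\bigr)$. Integrals of rational functions against logarithms generically produce dilogarithms, and the residue check you propose for ``the angular rational function'' never sees these terms.

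These terms can be removed. For odd $j$, $\ell^j$ has a $2\pi$-periodic trigonometric-polynomial antiderivative, so integrating by parts around the whole circle leaves only rational trigonometric integrals. But that step is missing from your outline. Even after it, you would still have to show that all the resulting logarithms collapse to the single $\log\frac{D_\epsilon-\epsilon}{D_\epsilon+\epsilon}$ with the displayed coefficient. A priori their arguments involve $\sqrt a$, $\sqrt3$, and the angular positions of the zeros of $A_\epsilon$ on the edge lines. Your outline never computes that coefficient; it only says it ``would be read off.''

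The missing idea, which is how the paper proceeds, is to bypass two-dimensional primitives with an exact divergence identity. There are explicit polynomials $R_\epsilon,S_\epsilon$ in $u,w$ with $B_\epsilon^5/A_\epsilon^4=\partial_u(R_\epsilon/A_\epsilon^3)+\partial_w(S_\epsilon/A_\epsilon^3)$, which is a finite polynomial identity. Green's theorem then reduces $I_N$ to integrals over the three edges, on each of which $A_\epsilon$ restricts to $h_\epsilon(t)=3+2\epsilon+\epsilon^2t(1-t)$. Symmetrizing under $t\mapsto1-t$ yields the single rational integral $\int_0^1G_\epsilon(t)h_\epsilon(t)^{-3}\,dt$.

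The roots $\rho_\pm=(\epsilon\pm D_\epsilon)/(2\epsilon)$ of $h_\epsilon$ are real and lie outside $[0,1]$. This excludes arctangent and $\pi$ terms at once and explains why $D_\epsilon$ is the only irrationality. It also gives the logarithmic part $c_+\log(1-1/\rho_+)+c_-\log(1-1/\rho_-)=2c_+\log\frac{D_\epsilon-\epsilon}{D_\epsilon+\epsilon}$, where $c_\pm$ are the simple-pole coefficients and $c_-=-c_+$. Your positivity argument and the expansions to $\frac{5}{108\epsilon}$ and $\frac1{36N^2}$ are correct once that formula is established.
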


\begin{proof}
Proposition~\ref{app:upper-log-computation} gives a divergence identity on $\Delta$, reduces the boundary integral to a rational function in one variable, and computes the coefficients of the simple poles at the two roots of the quadratic polynomial whose cube is the denominator.  The resulting logarithmic part is exactly the expression displayed above.  Since
$D_\epsilon^2-\epsilon^2=8\epsilon+12>0$,
we have $D_\epsilon>\epsilon>0$ and hence
$0<\frac{D_\epsilon-\epsilon}{D_\epsilon+\epsilon}<1$.
Write the displayed expression as $-P_\epsilon\log r_\epsilon$, where $P_\epsilon>0$ and $0<r_\epsilon<1$.  It follows that $\Lambda(I_N)>0$.  Equation~\eqref{eq:app-upper-asymptotic} gives
$$
        \Lambda(I_N)
        =\frac{5}{108\epsilon}+\frac{5}{324}+O(\epsilon),
$$
which implies both asserted estimates after multiplication by $3\eta/(20N^3)$.
\end{proof}

The three cases of Lemma~\ref{lem:upper-cokernel-bookkeeping} give the
corresponding decomposition of the normalized exponent space.  The
boundary faces $s=1/N$ and $\beta_i=Q$ contain only $O_N(Q^2)$ lattice
points and therefore contribute $O_N(Q^4)=o_N(Q^5)$.  Thus only case
(iii) can contribute a nonpolynomial leading term.

\begin{proposition}\label{prop:upper-leading-log-small}
For each sufficiently large fixed $N$, the limit
$$
        \lim_{Q=p^e\to\infty}\frac{U_{N,Q}}{Q^5}
$$
exists and can be written as an algebraic number plus a logarithmic part
$$
        A_N^{\rm up}+\Lambda_N^{\rm up},
        \qquad
        A_N^{\rm up}\in\overline{\QQ},
$$
where $\Lambda_N^{\rm up}$ is a finite linear combination over $\overline{\QQ}$ of logarithms of positive algebraic numbers.  In fact,
$$
        \Lambda_N^{\rm up}>0,
        \qquad
        \Lambda_N^{\rm up}=\frac{1}{36N^2}+O(N^{-3}).
$$
\end{proposition}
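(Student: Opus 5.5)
The plan is to sum the block decomposition of Lemma~\ref{lem:upper-cokernel-bookkeeping} over all $0\leq a\leq M_NQ$ (Lemma~\ref{lem:upper-sum-finite}) and all $|\beta|=Q+a$, and to turn it into a Riemann sum in the normalized variables $s=a/Q\in[0,M_N]$ and $b=\beta/Q$, with $|b|=1+s$. There are $O_N(Q^3)$ blocks, each of dimension $O_N(Q^2)$, and all the error terms in Lemmas~\ref{lem:upper-cokernel-bookkeeping} and~\ref{lem:uniform-enriques-pencil-cohomology} are uniform on the compact parameter set. The lattice points on the boundary faces $s=1/N$ and $b_i=1$ number only $O_N(Q^2)$, so they contribute $o(Q^5)$, as noted before the proposition. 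Hence
\[
\frac{U_{N,Q}}{Q^5}\longrightarrow \mathcal U^{(i)}_N+\mathcal U^{(iii)}_N,
\]
where case (ii) contributes nothing in the limit. Since $p^e$ plays no special role in the Riemann sums, the limit exists along $Q=p^e$.

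For case (i) the region is $\{b_j<1 \text{ for all } j,\ 0\leq s\leq M_N\}$. Here the integrand is $\tfrac12\bigl(\sum_j (b_j+sN)F_j\bigr)^2=\eta\sum_{j<k}(b_j+sN)(b_k+sN)$, a polynomial with rational coefficients. The region is a polytope with rational vertices, so $\mathcal U^{(i)}_N\in\QQ$. Note that only $s<1/N$ is forced to have $H(\beta)=\varnothing$ once some $b_j\geq1$; the case (i) region itself has $s\leq 2$, since $|b|<3$.

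For case (iii) there is one region for each $i$, namely $0\leq s<1/N$, $b_i\geq1$. By the $S_3$-symmetry of the intersection matrix these three regions contribute equally. In each, Lemma~\ref{lem:uniform-enriques-pencil-cohomology} replaces the $h^1$ by $\tfrac{Q^2}{2}\max\{-\alpha^2,0\}$, where $\alpha=\sum_j c_jF_j$ with $c_j=b_j+sN-2\delta_{ij}$ and $\alpha^2=2\eta\sum_{j<k}c_jc_k$. So
\[
\mathcal U^{(iii)}_N=3\cdot\frac{\eta}{1}\int_0^{1/N}\!\!\int_{b_i\geq1,\ |b|=1+s}\max\Bigl\{-\sum_{j<k}c_jc_k,0\Bigr\}\,db\,ds .
\]
I would parametrize $b_i=1+s\,u'$ with the remaining two coordinates filling $s\Delta$, writing $s=\epsilon t$ with $\epsilon=1/N$ and $(u,w)\in\Delta$. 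The integrand then becomes a quadratic in $t$ whose negativity locus is an interval $t\in[0,t_*(u,w)]$. Here $t_*$ is a ratio of the form $B_\epsilon(u)/A_\epsilon(u,w)$, coming from solving the quadratic, which degenerates to a linear-fractional expression after factoring the root at $t=0$.

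Integrating the cubic-in-$t$ expression (quadratic integrand times the Jacobian $t^2$) from $0$ to $t_*$ gives, up to polynomial pieces, $B_\epsilon^5/A_\epsilon^4$ times a constant. Tracking the constants should produce $\frac{3\eta}{20N^3}I_N$ plus integrals of polynomials over polygons with algebraic vertices. The latter are the parts where $t_*$ exceeds the cap $t\leq1$, where the cap $s<1/N$ is cut by the conic; these pieces are algebraic. Lemma~\ref{lem:critical-integral-arithmetic} then supplies a presentation by algebraic numbers and logarithms, with positive logarithmic part $\frac{1}{36N^2}+O(N^{-3})$. The logarithmic part is well defined by Theorem~\ref{thm:Baker}, so $\Lambda^{\rm up}_N$ is exactly that of the case (iii) integral, which gives both the positivity and the asymptotic.

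The main obstacle is this last reduction. I have to verify exactly that the case (iii) integral equals $\frac{3\eta}{20N^3}I_N$ modulo algebraic numbers. In particular I must check that the places where the cap $t\leq 1$ or the conic boundary meets the edges of $\Delta$ introduce only algebraic corrections and no further logarithms; otherwise those extra logarithms would have to be computed and shown not to cancel. I would first attempt the inner $t$-integration symbolically. If the truncation at $t=1$ is active on part of $\Delta$ for large $N$, I would handle that piece separately, checking that its integrand is polynomial on a polygon whose curved side is cut out by a conic with algebraic data, and compute its contribution directly.
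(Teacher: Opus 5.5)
Your outline matches the paper's proof: blockwise Riemann sums; case (i) as a polynomial integral over a rational polytope; case (ii) negligible; and case (iii) via $s=T/N$, $v=Tu/N$, $x=Tw/N$ with Jacobian $T^2/N^3$ and $c_0c_1+c_0c_2+c_1c_2=T(A_\epsilon T-B_\epsilon)$. However, you leave open exactly the step that makes the case (iii) contribution equal to $\frac{3\eta}{20N^3}I_N$, and your fallback for that step is wrong.

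The missing fact is that the cap $T\le 1$ is never active. On $\Delta$, with $z=1-u-w\ge 0$,
\[
A_\epsilon=3+2\epsilon+\epsilon^2(uw+uz+wz)>0,\qquad A_\epsilon-B_\epsilon=1+\epsilon(1+u)+\epsilon^2(uw+uz+wz)>0,
\]
and $B_\epsilon=2+\epsilon(1-u)>0$, so $0<B_\epsilon/A_\epsilon<1$ everywhere. Geometrically, at $T=1$, i.e.\ $s=1/N$, all $c_j\ge 0$ and the kernel divisor is already nef, exactly as in case (ii). Hence the inner integral is exactly $\int_0^{B_\epsilon/A_\epsilon}T^3(B_\epsilon-A_\epsilon T)\,dT=B_\epsilon^5/(20A_\epsilon^4)$ on all of $\Delta$. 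There are no ``polynomial pieces'' and no correction regions, so case (iii) contributes exactly $\frac{3\eta}{20N^3}I_N$. (The integrand $T^3(B_\epsilon-A_\epsilon T)$ is quartic, not cubic, in $T$.)

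This is not merely bookkeeping, because your claim that truncated pieces would be algebraic is false in general. If $B_\epsilon/A_\epsilon$ exceeded $1$ on part of $\Delta$, that part would be cut out by the conic $A_\epsilon=B_\epsilon$. Integrating a polynomial over a region with a conic boundary typically produces logarithms or $\pi$; this is precisely the mechanism that creates the logarithm in the lower range. Moreover, the untruncated piece would then be the integral of $B_\epsilon^5/(20A_\epsilon^4)$ over a conic-bounded subregion rather than over $\Delta$. Lemma~\ref{lem:critical-integral-arithmetic} would then not apply as stated, and both the logarithmic presentation and the sign of $\Lambda_N^{\rm up}$ would need a new computation. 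With the inequality above, the rest of your argument goes through as in the paper: case (i) is rational, Baker's theorem makes the logarithmic part well defined, and Lemma~\ref{lem:critical-integral-arithmetic} gives positivity and the asymptotic.
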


\begin{proof}
Fix $N$ and let $Q=p^e\to\infty$.  We first prove that the normalized
upper sum converges.  By Lemma~\ref{lem:upper-sum-finite},
$$
        U_{N,Q}
        =
        \sum_{a=0}^{\lfloor M_NQ\rfloor}
        \ \sum_{\substack{\beta\in\ZZ_{\geq0}^3\\|\beta|=Q+a}}
        \dim_k C_{\beta,a}.
$$
Put $s=a/Q$ and $b_i=\beta_i/Q$.  The parameters lie in the compact
polytope
$$
\mathcal K_N=
\left\{(s,b_0,b_1,b_2)\in\RR_{\geq0}^4\ \middle|\
0\leq s\leq M_N,\quad b_0+b_1+b_2=1+s\right\}.
$$
There are $O_N(Q)$ choices for $a$ and $O_N(Q^2)$ triples $\beta$ for
each $a$, hence $O_N(Q^3)$ blocks altogether.  By
Lemma~\ref{lem:upper-cokernel-bookkeeping}, there is a function
$\varepsilon_N(Q)\to0$ such that every blockwise error on
$\mathcal K_N$ is bounded by $\varepsilon_N(Q)Q^2$.  Their total is
therefore
$$
        O_N(Q^3)\varepsilon_N(Q)Q^2=o_N(Q^5).
$$
The same uniform estimates show that every block has dimension
$O_N(Q^2)$.  Each boundary face $s=1/N$ or $b_i=1$ contains only
$O_N(Q^2)$ lattice points, so all these faces contribute
$O_N(Q^4)=o_N(Q^5)$.  We may therefore work away from the boundaries.

We divide the remaining blocks into three regions.  First suppose that
$H(\beta)=\varnothing$.  Set
$$
        f_N(s,b_0,b_1,b_2)
        =\frac12\left(\sum_{i=0}^2(b_i+Ns)F_i\right)^2.
$$
Lemma~\ref{lem:upper-cokernel-bookkeeping}(i) gives
$$
        \dim_k C_{\beta,a}
        =Q^2f_N(s,b_0,b_1,b_2)+o_N(Q^2)
$$
uniformly.  The condition $H(\beta)=\varnothing$ is $b_i<1$ for every
$i$.  Hence, after division by $Q^5$, these blocks form a
three-dimensional Riemann sum over the rational polytope
$$
\mathcal P_N^{(0)}
=
\left\{
(s,b_0,b_1,b_2)\in\RR_{\geq0}^4
\ \middle|\
b_0+b_1+b_2=1+s,\quad b_i\leq1\ \text{for all }i
\right\}.
$$
The inequalities $b_i\leq1$ imply $s\leq2\leq M_N$, so this polytope
lies in $\mathcal K_N$ and does not depend on the auxiliary cutoff.
The corresponding Riemann sums converge to
$$
        A_N^{(0)}
        :=\int_{\mathcal P_N^{(0)}}f_N\,d\mu,
$$
where $d\mu=ds\,db_0\,db_1$ and
$b_2=1+s-b_0-b_1$.  Since $f_N$ is a polynomial with algebraic
coefficients and $\mathcal P_N^{(0)}$ is rational,
$A_N^{(0)}\in\overline{\QQ}$.

Next suppose that $H(\beta)\neq\varnothing$ and $s\geq1/N$.
Lemma~\ref{lem:upper-cokernel-bookkeeping}(ii) gives
$\dim_k C_{\beta,a}=o_N(Q^2)$ uniformly throughout this region.  Since
there are $O_N(Q^3)$ blocks, their total contribution is $o_N(Q^5)$.

It remains to consider case (iii) of
Lemma~\ref{lem:upper-cokernel-bookkeeping}.  By symmetry, assume that
the unique high coordinate is $\beta_0$.  Write
$$
        \beta_0=Q+\ell,\qquad
        v=\frac{\ell}{Q},\qquad
        x=\frac{\beta_1}{Q},
$$
so that
$$
        \frac{\beta_2}{Q}=s-v-x,
        \qquad
        0\leq v\leq s,
        \qquad
        0\leq x\leq s-v.
$$
Put
$$
        K_{\beta,a}=D_{\beta,a}-2QF_0,
        \qquad
        \frac1QK_{\beta,a}=c_0F_0+c_1F_1+c_2F_2.
$$
Then
$$
        c_0=-1+v+Ns,\qquad
        c_1=x+Ns,\qquad
        c_2=s-v-x+Ns.
$$
Make the change of variables
$$
        s=\frac TN,\qquad
        v=\frac{Tu}{N},\qquad
        x=\frac{Tw}{N},
$$
where $0\leq T\leq1$ and
$(u,w)\in\Delta$.  Its Jacobian is
$$
        ds\,dv\,dx=\frac{T^2}{N^3}\,dT\,du\,dw.
$$
Let $\epsilon=1/N$ and put $z=1-u-w$.  Then
$$
        c_0=-1+T+\epsilon Tu,\qquad
        c_1=T+\epsilon Tw,\qquad
        c_2=T+\epsilon Tz.
$$
A direct calculation gives
$$
        c_0c_1+c_0c_2+c_1c_2
        =T\bigl(A_\epsilon(u,w)T-B_\epsilon(u)\bigr),
$$
where $A_\epsilon$ and $B_\epsilon$ are those of
Lemma~\ref{lem:critical-integral-arithmetic}.  Moreover,
$$
        A_\epsilon
        =3+2\epsilon+\epsilon^2(uw+uz+wz)>0
$$
and
$$
        A_\epsilon-B_\epsilon
        =1+\epsilon(1+u)+\epsilon^2(uw+uz+wz)>0.
$$
Thus $0<B_\epsilon/A_\epsilon<1$ on $\Delta$.

Since $F_i^2=0$ and $F_i\cdot F_j=\eta$ for $i\neq j$,
Lemmas~\ref{lem:upper-cokernel-bookkeeping}(iii) and
\ref{lem:uniform-enriques-pencil-cohomology} give
$$
        \dim_k C_{\beta,a}
        =\eta Q^2
        \max\{-T(A_\epsilon T-B_\epsilon),0\}+o_N(Q^2)
$$
uniformly on this region.  The variables $(a,\ell,\beta_1)$ vary on a
lattice of mesh $1/Q$ in $(s,v,x)$.  Thus the normalized sum over this
region is a three-dimensional Riemann sum.  Accounting for the three
possible high coordinates, its limit is
$$
        \frac{3\eta}{N^3}
        \int_\Delta\int_0^1
        T^2\max\{T(B_\epsilon-A_\epsilon T),0\}
        \,dT\,du\,dw.
$$
The integrand is positive precisely for
$0\leq T<B_\epsilon/A_\epsilon$, and
$$
        \int_0^{B_\epsilon/A_\epsilon}
        T^3(B_\epsilon-A_\epsilon T)\,dT
        =\frac{B_\epsilon^5}{20A_\epsilon^4}.
$$
Hence the exceptional region contributes
$$
        \frac{3\eta}{20N^3}I_N.
$$
We have proved both the existence of the limit and the identity
$$
        \lim_{Q=p^e\to\infty}\frac{U_{N,Q}}{Q^5}
        =A_N^{(0)}+\frac{3\eta}{20N^3}I_N.
$$

By Lemma~\ref{lem:critical-integral-arithmetic}, $I_N$ is an algebraic
number plus a finite $\overline{\QQ}$-linear combination of logarithms
of positive algebraic numbers.  Absorbing its algebraic part into
$A_N^{(0)}$, we obtain
$$
        \lim_{Q=p^e\to\infty}\frac{U_{N,Q}}{Q^5}
        =A_N^{\rm up}+\Lambda_N^{\rm up},
        \qquad
        \Lambda_N^{\rm up}
        =\frac{3\eta}{20N^3}\Lambda(I_N)>0.
$$
Finally, $\eta=4$, and the same lemma gives
$$
        \Lambda_N^{\rm up}
        =\frac{1}{36N^2}+O(N^{-3}).
$$
\end{proof}

\subsection{The Generalized Hilbert--Kunz Value}\label{subsec:ghk-conclusion}

We now combine the exact degree decomposition with the two limiting
calculations.  Since both logarithmic contributions are positive, they
cannot cancel.  Their relative orders are not needed for transcendence.

\begin{theorem}\label{thm:enriques-gHK-criterion}
Let $k$ be an uncountable algebraically closed field of characteristic $p>2$.  For all sufficiently large integers $N$, the homogeneous ideal
$J_N=S_N\cdot\widehat W$
in the normal standard graded domain
$S_N=R(X,L_N)$
has transcendental generalized Hilbert--Kunz multiplicity:
$e_{\rm gHK}(J_N(S_N)_{(S_N)_+})$
is transcendental.
\end{theorem}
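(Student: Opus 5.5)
The plan is to combine the exact degree decomposition of Subsection~\ref{subsec:split-pencil-bundle} with the two limit computations, and then apply Corollary~\ref{cor:Baker-linear-form}.

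First, fix $N$ large enough that Lemma~\ref{lem:LN-normal-standard} holds and Propositions~\ref{prop:exact-lower-log} and~\ref{prop:upper-leading-log-small} apply. We already have, for each $Q=p^e$, the exact identity
$$
\lambda_{R_N}\bigl(H^0_{\mathfrak m_N}(R_N/J_N^{[Q]}R_N)\bigr)=L_{N,Q}+U_{N,Q}.
$$
Lemma~\ref{lem:graded-local-length} justifies measuring lengths in the graded ring, and $\dim R_N=5$. We divide by $Q^5$. The lower term converges to $\mathcal L_N$, as shown just before Proposition~\ref{prop:exact-lower-log}. The upper term converges by Proposition~\ref{prop:upper-leading-log-small}. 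Hence the limit defining $e_{\rm gHK}(J_NR_N)$ exists, so $e^+_{\rm gHK}=e^-_{\rm gHK}$, and
$$
e_{\rm gHK}(J_NR_N)=\bigl(A_N^{\rm low}+A_N^{\rm up}\bigr)+\bigl(b_N\log\alpha_N+\Lambda_N^{\rm up}\bigr).
$$

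Second, check that this expression is a presentation by algebraic numbers and logarithms in the sense of Subsection~\ref{subsec:baker-prelim}.
\begin{itemize}
\item $A_N^{\rm low},A_N^{\rm up}\in\overline{\QQ}$.
\item $b_N\in\overline{\QQ}$, because $\gamma_N$ is algebraic.
\item $\alpha_N$ is a positive algebraic number exceeding $1$.
\item $\Lambda_N^{\rm up}=\frac{3\eta}{20N^3}\Lambda(I_N)$ is a $\overline{\QQ}$-multiple of $\log\bigl((D_\epsilon-\epsilon)/(D_\epsilon+\epsilon)\bigr)$, where the argument of the logarithm is a positive algebraic number and its coefficient is algebraic.
\end{itemize}

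Third, the logarithmic part is nonzero because both pieces are strictly positive. Proposition~\ref{prop:exact-lower-log} gives $b_N\log\alpha_N>0$, and Proposition~\ref{prop:upper-leading-log-small} gives $\Lambda_N^{\rm up}>0$. Their sum is therefore positive, hence nonzero, and Corollary~\ref{cor:Baker-linear-form} shows that $e_{\rm gHK}$ is transcendental.

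The main subtlety is in the Baker step. The corollary needs the sum $\sum A_j\ell_j$, as written, to be nonzero. We verify this directly as a real number: it is a sum of two positive reals. So no independence of $\log\alpha_N$ and the upper logarithm is required, and the corollary's internal reduction to a $\QQ$-independent subfamily handles everything else.

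The remaining point to check is that "sufficiently large $N$" can be chosen uniformly for all the inputs, so that a single fixed $N$ serves Lemma~\ref{lem:LN-normal-standard} and both propositions. The propositions' positivity statements are asserted for each such fixed $N$, so taking the maximum of the finitely many thresholds suffices.
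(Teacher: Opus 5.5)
Your proposal is correct and follows the paper's proof. You divide the exact identity $L_{N,Q}+U_{N,Q}$ by $Q^5$ and take the two limits from Subsection~\ref{subsec:split-pencil-lower} and Proposition~\ref{prop:upper-leading-log-small}. You then note that $b_N\log\alpha_N>0$ and $\Lambda_N^{\rm up}>0$, so the combined logarithmic part is a nonzero real number, and apply Corollary~\ref{cor:Baker-linear-form}.

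Your extra checks only make explicit what the paper leaves implicit. These are the algebraicity of $b_N$, $\alpha_N$ and the upper coefficient, and the choice of a common threshold for $N$. You are also right that no $\QQ$-independence of the two logarithms is needed.
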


\begin{proof}
Fix $N$ sufficiently large and put
$R_N=(S_N)_{(S_N)_+}$ and $\mathfrak m_N=(S_N)_+R_N$.  Since
$\dim R_N=5$, the exact identity preceding
Lemma~\ref{lem:upper-sum-finite} gives
$$
\frac{
\lambda_{R_N}\!\left(
H^0_{\mathfrak m_N}(R_N/J_N^{[Q]}R_N)
\right)}{Q^5}
=
\frac{L_{N,Q}}{Q^5}+
\frac{U_{N,Q}}{Q^5}.
$$
The calculation in Subsection~\ref{subsec:split-pencil-lower} gives the
lower limit, and Proposition~\ref{prop:upper-leading-log-small} gives
the upper limit.  Hence $e_{\rm gHK}(J_NR_N)$ exists.

By Proposition~\ref{prop:exact-lower-log}, the lower limit has the form
$$
        A_N^{\rm low}+\Lambda_N^{\rm low},
        \qquad
        \Lambda_N^{\rm low}=b_N\log\alpha_N>0.
$$
By Proposition~\ref{prop:upper-leading-log-small}, the upper limit has
the form
$$
        A_N^{\rm up}+\Lambda_N^{\rm up},
        \qquad
        \Lambda_N^{\rm up}>0,
$$
where $A_N^{\rm low}$ and $A_N^{\rm up}$ are algebraic and both
logarithmic parts are algebraic linear combinations of logarithms of
positive algebraic numbers.  Combining them, we obtain positive
algebraic numbers $\rho_{N,1},\ldots,\rho_{N,r}$ and coefficients
$c_{N,1},\ldots,c_{N,r}\in\overline{\QQ}$ such that
$$
        e_{\rm gHK}(J_NR_N)
        =A_N+\sum_{j=1}^r c_{N,j}\log\rho_{N,j},
        \qquad A_N\in\overline{\QQ},
        \qquad
        \sum_{j=1}^r c_{N,j}\log\rho_{N,j}
        =\Lambda_N^{\rm low}+\Lambda_N^{\rm up}>0.
$$
Corollary~\ref{cor:Baker-linear-form} therefore shows that
$e_{\rm gHK}(J_NR_N)$ is transcendental.
\end{proof}

\section{From Generalized to Ordinary Hilbert--Kunz Multiplicity}\label{sec:ordinary-HK}

Section~\ref{sec:enriques-gHK} produces a transcendental generalized Hilbert--Kunz multiplicity for a homogeneous ideal that is not primary to the homogeneous maximal ideal.  We pass to ordinary Hilbert--Kunz multiplicity through Vraciu's reduction \cite[Theorem~2.1]{Vraciu}.  The induction introduces $Q$-dependent colon ideals rather than Frobenius powers of fixed ideals.  Observation~2.8 of \cite{Vraciu} reduces the required LC bounds for these families to LC for Frobenius powers of fixed ideals.  We establish those bounds geometrically and verify that every branch of the local induction arises by localizing homogeneous data.

Fix a sufficiently large $N$ for which Theorem~\ref{thm:enriques-gHK-criterion} holds, and write
$S=S_N=R(X,L_N)$, $J=J_N$, $ R=S_{S_+}$ and $\mathfrak m=S_+R$.
The ring $S$ is a normal standard graded domain, and $X=\Proj S$ is smooth.  The finiteness hypotheses used in \cite{Vraciu} are satisfied: the algebraically closed field $k$ is perfect and $F$-finite, and therefore the finitely generated $k$-algebra $S$ and its localization $R$ are $F$-finite.  The LC bounds and the homogeneous prime avoidance choices needed for the reduction are established below.  The residue field of $R$ is the uncountable field $k$.

For a family of ideals $\{A_Q\}_{Q=p^e}$ in $R$, the LC condition means that there is a constant $C$ such that
$$
        \mathfrak m^{CQ}H^0_{\mathfrak m}(R/A_Q)=0
        \qquad\text{for every }Q=p^e.
$$

\begin{lemma}\label{lem:fixed-homogeneous-LC}
Let $A\subseteq S$ be a fixed homogeneous ideal.  Then the Frobenius family $\{A^{[Q]}R\}_{Q=p^e}$ satisfies LC.  More precisely, there is an integer $B_A>0$ such that
$$
        \mathfrak m^{B_AQ}H^0_{\mathfrak m}(R/A^{[Q]}R)=0
        \qquad\text{for every }Q=p^e.
$$
\end{lemma}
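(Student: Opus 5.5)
We work in the graded ring and transfer to $R$ at the end. By Lemma~\ref{lem:graded-local-length} and the identification $H^0_{S_+}(S/A^{[Q]})\cong (A^{[Q]})^{\rm sat}/A^{[Q]}$, whose localization at $S_+$ is $H^0_{\mathfrak m}(R/A^{[Q]}R)$, it suffices to find $B_A$ with $S_+^{B_AQ}\cdot\bigl((A^{[Q]})^{\rm sat}/A^{[Q]}\bigr)=0$ for all $Q$. Since $S$ is standard graded, $S_+^{n}=\bigoplus_{m\geq n}S_m$. It is therefore enough to prove two facts: that $(A^{[Q]})^{\rm sat}$ and $A^{[Q]}$ agree in all degrees $m\geq cQ$ for some constant $c$, and that the nonzero part of the quotient begins in degree $\geq 0$, which is automatic. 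Then $S_+^{cQ}$ kills the quotient, because a homogeneous element of degree $\geq0$ times an element of degree $\geq cQ$ lands in the range of equality.

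For the equality in high degree, choose homogeneous generators $f_1,\dots,f_s$ of $A$ of degrees $d_1,\dots,d_s$. This gives a surjection of graded sheaves on $X=\Proj S$,
$$\bigoplus_j \OO_X(-d_jL)\longrightarrow \widetilde A\longrightarrow 0,$$
with kernel a fixed coherent sheaf $\Kcal$. Since $X$ is regular (it is smooth, being a projective bundle over the smooth surface $Y$), Kunz's theorem makes $F^e$ flat. Pulling back gives an exact sequence
$$0\to F^{e*}\Kcal\to\bigoplus_j\OO_X(-Qd_jL)\to F^{e*}\widetilde A\to0,$$
and the image of $F^{e*}\widetilde A$ in $\OO_X$ is $\widetilde{A^{[Q]}}$. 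Flatness of $F^e$ on $\OO_X\supseteq\widetilde A$ also identifies $F^{e*}\widetilde A$ with that ideal sheaf. Twisting by $mL$ and taking global sections shows that $(A^{[Q]})_m$, the image of $\bigoplus_j S_{m-Qd_j}f_j^Q$, equals $H^0(X,\widetilde{A^{[Q]}}\otimes mL)$ once $H^1(X,F^{e*}\Kcal\otimes mL)=0$. That vanishing holds for $m\geq C_{\Kcal}Q$ by Lemma~\ref{lem:linear-frobenius-serre-vanishing}. Here we use that $S_{m-Qd_j}=H^0(X,(m-Qd_j)L)$, because $S$ is the full section ring. Lemma~\ref{lem:full-section-ring-saturation} identifies $H^0(X,\widetilde{A^{[Q]}}\otimes mL)$ with $((A^{[Q]})^{\rm sat})_m$. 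So the two agree for $m\geq C_{\Kcal}Q$, and we take $B_A=\lceil C_{\Kcal}\rceil$, enlarged to be positive.

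The main obstacle is the identification of the pulled-back sheaf with the ideal sheaf of the Frobenius power, together with keeping the degree shifts $Qd_j$ uniform in $Q$. If the sheaf identification were delicate, I would avoid it. Instead I would apply the lemma directly to the kernel of $\bigoplus_j\OO_X(-d_jL)\to\OO_X$, whose Frobenius pullback is exactly the kernel of $\bigoplus_j\OO_X(-Qd_jL)\xrightarrow{(f_j^Q)}\OO_X$ by flatness. The image of this map is $\widetilde{A^{[Q]}}$ by definition, so the vanishing of $H^1$ of the pulled-back kernel gives the surjectivity needed. The final step is to pass back to $R$: $\mathfrak m^{B_AQ}$ is generated by $S_+^{B_AQ}$, and localization commutes with $H^0$.
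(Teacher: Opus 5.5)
Your proposal is correct and follows essentially the same route as the paper. You pull back the presentation $0\to\Kcal\to\bigoplus_j\OO_X(-d_jL)\to\widetilde A\to0$ by the flat Frobenius, identify the degree-$m$ saturation defect with a cokernel on global sections via Lemma~\ref{lem:full-section-ring-saturation}, kill it for $m\geq CQ$ with Lemma~\ref{lem:linear-frobenius-serre-vanishing}, and conclude by standard gradedness and localization; your choice $B_A=\lceil C_{\Kcal}\rceil$ in place of the paper's $B_A>C_A+1$ also works, because the quotient lives in nonnegative degrees.
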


\begin{proof}
If $A=0$, then $H^0_{S_+}(S/A^{[Q]})=H^0_{S_+}(S)=0$ because $S$ is a domain of positive dimension, and the assertion is immediate.  We may therefore assume that $A\neq0$.

Choose homogeneous generators $g_1,\ldots,g_t$ of $A$, with $\deg g_i=d_i$.  On $X=\Proj S$ they give an exact sequence
$$
        0\longrightarrow\Kcal_A\longrightarrow
        \bigoplus_{i=1}^t\OO_X(-d_iL_N)
        \longrightarrow\widetilde A\longrightarrow0,
$$
where $\widetilde A$ denotes the image ideal sheaf.  The variety $X$ is smooth over the perfect field $k$, so Frobenius is flat.  The pulled-back map is generated by $g_1^Q,\ldots,g_t^Q$; its image is therefore the ideal sheaf $\widetilde{A^{[Q]}}$.  Thus
$$
        0\longrightarrow F^{e*}\Kcal_A\longrightarrow
        \bigoplus_{i=1}^t\OO_X(-Qd_iL_N)
        \longrightarrow\widetilde{A^{[Q]}}\longrightarrow0
$$
is exact.
For every integer $m\geq0$, the piece of the saturation quotient of degree $m$ is the cokernel of
$$
        \bigoplus_{i=1}^tH^0(X,(m-Qd_i)L_N)
        \longrightarrow
        H^0(X,\widetilde{A^{[Q]}}\otimes mL_N);
$$
here we use Lemma~\ref{lem:full-section-ring-saturation}, with the convention that $H^0(X,rL_N)=0$ for $r<0$.  The cokernel vanishes if
$H^1(X,F^{e*}\Kcal_A\otimes mL_N)=0$.
By Lemma~\ref{lem:linear-frobenius-serre-vanishing}, there is a constant $C_A$ for which this holds whenever $m\geq C_AQ$.  Hence
$H^0_{S_+}(S/A^{[Q]})_m=0$ for $m\geq C_AQ$.
The module on the left has no part in negative degrees.  Since $S$ is standard graded, any integer $B_A>C_A+1$ satisfies
$(S_+)^{B_AQ}H^0_{S_+}(S/A^{[Q]})=0$.
Zeroth local cohomology commutes with localization here: if $T=S\setminus S_+$, then
$T^{-1}H^0_{S_+}(S/A^{[Q]}) \cong H^0_{\mathfrak m}(R/A^{[Q]}R)$.
Localizing this annihilation at $S_+$ gives the asserted LC bound.
\end{proof}

The proof gives a uniform linear annihilation bound.  The elementary calculation below shows that the linear dependence on $Q$ can be sharp and also exhibits the type of finite integer combination produced by the general reduction.

\begin{example}\label{ex:one-step-vraciu}
Let
$T=k[x,y]$, $\mathfrak n=(x,y)$ and  $A=(x^2,xy)$.
For every $Q=p^e$,
$A^{[Q]}=(x^{2Q},x^Qy^Q)$,  $(A^{[Q]})^{\rm sat}=(x^Q)$.
Therefore
$$
        H^0_{\mathfrak n}(T/A^{[Q]})
        \cong
        \frac{(x^Q)}{(x^{2Q},x^Qy^Q)}
        \cong
        \frac{T}{(x^Q,y^Q)}(-Q),
$$
and hence
$\lambda_T\bigl(H^0_{\mathfrak n}(T/A^{[Q]})\bigr)=Q^2$.
Moreover,
$\mathfrak n^{2Q-1}H^0_{\mathfrak n}(T/A^{[Q]})=0$,
whereas $\mathfrak n^{2Q-2}$ does not annihilate this module.  Thus the linear dependence on $Q$ in the LC bound is already sharp in this example.

Set
$B_1=A+(y)=(x^2,y)$,  $B_2=A+(y^2)=(x^2,xy,y^2)$.
Both ideals are $\mathfrak n$-primary, and direct monomial counts give
$\lambda_T(T/B_1^{[Q]})=2Q^2$,  $\lambda_T(T/B_2^{[Q]})=3Q^2$.
Consequently, for every $Q=p^e$,
$$
        \lambda_T\bigl(H^0_{\mathfrak n}(T/A^{[Q]})\bigr)
        =2\lambda_T(T/B_1^{[Q]})
        -\lambda_T(T/B_2^{[Q]}).
$$
The same identity holds after localization at $\mathfrak n$.  Thus the generalized Hilbert--Kunz function of a homogeneous ideal that is not primary is expressed, with coefficients independent of $Q$, as an integer linear combination of the ordinary Hilbert--Kunz functions of two fixed primary ideals.\hfill\qedsymbol
\end{example}

The next elementary lemma records the graded localization facts used in the reduction.

\begin{lemma}\label{lem:homogeneous-localization}
Let $R=S_{S_+}$.  Let $B,C\subseteq S$ be homogeneous ideals and let $y\in S$ be homogeneous.  Then sums, products, Frobenius powers, and colons by $y$ commute with localization.  In particular,
$(B:y)R=(BR:y/1)$.
Moreover, if $M$ is a finitely generated graded $S$-module, then
$$
        \operatorname{Ass}_R(M_R)
        =
        \{\,PR\mid P\in\operatorname{Ass}_S(M),\ P\subseteq S_+\,\}.
$$
Every prime $P\in\operatorname{Ass}_S(M)$ is homogeneous.
\end{lemma}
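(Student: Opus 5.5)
The plan is to use that $R=T^{-1}S$ with $T=S\setminus S_+$, a multiplicative set of elements which are not all homogeneous. Localization is exact, and therefore commutes with sums, products and finite intersections of ideals.

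\emph{Frobenius powers.} If $B=(g_1,\dots,g_t)$ with homogeneous $g_i$, then $B^{[Q]}R$ and $(BR)^{[Q]}$ are both generated by the images $g_i^Q/1$. They are therefore equal.

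\emph{Colons.} The colon $B:y$ is the kernel of the composite $S\xrightarrow{\cdot y}S\to S/B$. Localization is exact, so the kernel of the localized map $R\xrightarrow{\cdot y/1}R/BR$ is $(B:y)R$. This gives $(B:y)R=(BR:y/1)$. Homogeneity of $y$ only ensures that $B:y$ is again a homogeneous ideal, so it remains in the class of ideals under consideration.

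\emph{Associated primes.} For a finitely generated module over the Noetherian ring $S$ we have the standard formula
$$\operatorname{Ass}_R(T^{-1}M)=\{T^{-1}P\mid P\in\operatorname{Ass}_S(M),\ P\cap T=\varnothing\}.$$
The condition $P\cap T=\varnothing$ says exactly that $P\subseteq S_+$. It remains to recall that associated primes of a graded module are homogeneous, which is the last assertion. Suppose $P=\operatorname{ann}(x)$. Replacing $x$ by a suitable multiple, one reduces to an annihilator of a homogeneous element; this is the standard argument, e.g.\ Bruns--Herzog, Lemma~1.5.6.

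Since $S_+$ is maximal among the homogeneous primes, the condition $P\subseteq S_+$ holds for every homogeneous prime $P\neq S$. Consequently every associated prime survives localization, although the statement does not require this. There is no real obstacle here. The only point needing care is that $T$ contains nonhomogeneous elements, so we must use exactness of $T^{-1}$ rather than graded localization.
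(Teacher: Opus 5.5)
Your proposal is correct and follows the same route as the paper. The paper likewise uses exactness of localization at $T=S\setminus S_+$ for sums, products, Frobenius powers and colons (via the kernel of multiplication by $y$ on $S/B$), the standard localization formula for associated primes (where $P\cap T=\varnothing$ means $P\subseteq S_+$), and the standard fact that associated primes of a graded module are homogeneous; your extra remark that every proper homogeneous ideal lies in $S_+$ because $S_0=k$, so no associated prime is lost, is also correct though not needed.
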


\begin{proof}
Localization is exact by \cite[(3.D)]{MatsumuraCA}; applying exactness to the kernel of multiplication by $y$ on $S/B$ gives the assertion about colons.  The formula for associated primes is \cite[Lemma~7.1]{MatsumuraCA}, and their homogeneity is \cite[Proposition~10.1]{MatsumuraCA}.
\end{proof}

The homogeneous prime avoidance statement needed in Vraciu's induction is recorded next.

\begin{lemma}\label{lem:homogeneous-countable-prime-avoidance}
Let $\{P_j\}_{j\geq1}$ be a countable collection of homogeneous prime ideals properly contained in $S_+$.  For every integer $n\geq1$, there is a homogeneous element
$y\in S_+^n\setminus\bigcup_{j\geq1}P_j$.
\end{lemma}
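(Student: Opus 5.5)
The plan is to reduce to a statement about vector spaces over the uncountable field $k$ and then use that a vector space over an uncountable field is not a countable union of proper subspaces. Since $S$ is standard graded with $S_0=k$ and $S_+=\bigoplus_{m>0}S_m$, we have $S_m\subseteq S_+^n$ for every $m\geq n$. It therefore suffices to find a single degree $m\geq n$ and an element of $S_m$ lying outside every $P_j$. Such an element is automatically homogeneous and lies in $S_+^n$.

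First, fix an index $j$. Because $P_j$ is homogeneous and properly contained in $S_+$, we claim that $(P_j)_m\neq S_m$ for all large $m$. Indeed, $S_+$ is generated by $S_1$, and $P_j\subsetneq S_+$ forces $S_1\not\subseteq P_j$. Pick $x\in S_1\setminus P_j$. Since $P_j$ is prime, $x^m\notin P_j$, so $x^m\in S_m\setminus (P_j)_m$ for every $m\geq1$. Thus for every $m\geq1$ and every $j$, the subspace $(P_j)_m\subseteq S_m$ is a proper $k$-subspace. In particular, fix $m=n$.

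Next, $S_n$ is a finite-dimensional $k$-vector space, nonzero because $S$ is a domain of positive dimension generated in degree one. It is covered by the countably many proper subspaces $(P_j)_n$ only if it is a countable union of proper subspaces. Over an uncountable field this is impossible. The standard argument chooses a line (an affine line $v+tw$, $t\in k$) not contained in any of them. Each proper subspace meets such a line in at most one point unless it contains the line's direction and base point, so a countable family meets it in a countable set, while the line has uncountably many points.

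To carry this out concretely, first pick $w\in S_n\setminus(P_1)_n$; if the subspaces are not all equal, a cleaner route is as follows. Choose any $v,w$ spanning enough: take $v\in S_n$ arbitrary and $w$ such that no $(P_j)_n$ contains both. Such a pair exists by induction on dimension, or more simply via the polynomial-function version. For a nonzero linear functional $\varphi_j$ vanishing on $(P_j)_n$, the set $\{t: \varphi_j(v+tw)=0\}$ is either all of $k$ or a single point. The main obstacle is ensuring it is never all of $k$. For that, it suffices to choose $v,w$ with $\varphi_j(w)\neq0$ for all $j$. This in turn reduces to the same problem in the dual, and is better handled by the polynomial trick: take a basis $e_1,\dots,e_r$ of $S_n$ and the curve $\gamma(t)=\sum_i t^{i-1}e_i$. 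For each $j$, the function $t\mapsto\varphi_j(\gamma(t))$ is a nonzero polynomial in $t$ of degree $<r$, since $\varphi_j\neq0$ on the basis. It has finitely many roots. A countable union of finite sets is countable, so some $t\in k$ gives $y=\gamma(t)\in S_n\setminus\bigcup_j(P_j)_n$, and then $y\notin P_j$ for every $j$. This $y$ is homogeneous of degree $n$, lies in $S_+^n$, and avoids every $P_j$, as required.
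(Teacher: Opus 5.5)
Your proof is correct and follows essentially the same route as the paper: you show each $(P_j)_d$ is a proper subspace of $S_d$ using a degree-one element outside $P_j$ and primality, and then use that a finite-dimensional vector space over an uncountable field is not a countable union of proper subspaces. The differences are minor: you work directly in degree $n$ (using $S_n=S_1^n\subseteq S_+^n$) rather than taking $h^n$ for $h$ of arbitrary degree, and you prove the covering fact explicitly via the curve $\gamma(t)=\sum_i t^{i-1}e_i$ where the paper cites it, so the exploratory affine-line discussion before that curve can simply be deleted.
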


\begin{proof}
Fix any $d\geq1$.  For every $j$, the subspace $P_j\cap S_d$ is proper in the finite dimensional $k$-vector space $S_d$.  Indeed, if $S_d\subseteq P_j$, then $x^d\in P_j$ for every $x\in S_1$, and primality forces $S_1\subseteq P_j$, contrary to $P_j\subsetneq S_+$.  A finite dimensional vector space over an uncountable field is not a countable union of proper linear subspaces.  We may therefore choose
$h\in S_d\setminus\bigcup_{j\geq1}P_j$.
Then $y=h^n$ is homogeneous, belongs to $S_+^n$, and remains outside every $P_j$ by primality.  This is the homogeneous form of the choice in \cite[Facts~1.5(c)]{Vraciu}.
\end{proof}

The uncountability assumption reflects a real limitation of the homogeneous selection argument.  Over a countable algebraically closed field, even the simplest standard graded ring admits a countable family of homogeneous primes that meets every nonzero homogeneous form.

\begin{example}\label{ex:countable-field-prime-avoidance}
Let
$k_0=\overline{\mathbb F_p}$,  $T=k_0[x,y]$ and $ \mathfrak n=(x,y)$.
The field $k_0$ is algebraically closed and countable.  Consider the countable family of homogeneous prime ideals
$\mathcal P =\{\,(x-\lambda y)\mid \lambda\in k_0\,\} \cup\{(y)\}$.
Every member of $\mathcal P$ is properly contained in $\mathfrak n$.  On the other hand, if $0\neq f\in T_d$ with $d>0$, then the binary form $f$ splits over $k_0$ as
$f(x,y) =c\,y^s\prod_{j=1}^{d-s}(x-\lambda_jy)$
for some $c\in k_0^\times$, some $s\geq0$, and some $\lambda_j\in k_0$.  Thus $f$ belongs to at least one prime in $\mathcal P$.  Consequently,
$T_d =\bigcup_{P\in\mathcal P}(P\cap T_d) \qquad(d>0)$,
and for no $n\geq1$ is there a homogeneous element of $\mathfrak n^n$ avoiding every prime in $\mathcal P$.

This does not show that the main theorem fails over countable algebraically closed fields.  It shows that the simultaneous homogeneous prime avoidance step used in the present reduction cannot simply be repeated there; removing uncountability would require a different selection mechanism.\hfill\qedsymbol
\end{example}

For ideals $B,K\subseteq R$ and $x\in R$, write
$$
        B\ast_0 xK:=B+xK,
        \qquad
        B\ast_1 x:=(B:x)+(x).
$$
Iterated operations are read from left to right.  If no auxiliary ideal is displayed, it is understood to be $R$.  Following \cite[Definition~2.6]{Vraciu}, a branch is called admissible if, at every step, both the chosen element and the auxiliary ideal avoid containment in the nonmaximal associated primes of the preceding quotient.  Each admissible operation lowers the dimension by one.

\begin{lemma}\label{lem:homogeneous-branch-LC}
Let $\overline A,\overline K_1,\ldots,\overline K_s\subseteq S$ be fixed homogeneous ideals, let $f_1,\ldots,f_s\in S$ be homogeneous, and put $A=\overline A R$ and $K_i=\overline K_iR$.  For a binary string $\varepsilon=(\varepsilon_1,\ldots,\varepsilon_s)$, set
$$
        B_Q=A^{[Q]}\ast_{\varepsilon_1}f_1^QK_1^{[Q]}
        \ast_{\varepsilon_2}\cdots
        \ast_{\varepsilon_s}f_s^QK_s^{[Q]}.
$$
Then the family $\{B_Q\}_{Q=p^e}$ satisfies LC.
\end{lemma}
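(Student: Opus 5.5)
The plan is to show that each $B_Q$ is, up to localization, the Frobenius power of a single fixed homogeneous ideal, or at least is sandwiched by such Frobenius powers in a way that preserves LC. Lemma~\ref{lem:fixed-homogeneous-LC} then applies; where an exact identification is not available, we use the reduction of \cite[Observation~2.8]{Vraciu}.

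We argue by induction on $s$. For $s=0$, $B_Q=A^{[Q]}$ and Lemma~\ref{lem:fixed-homogeneous-LC} gives the bound directly.

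For the inductive step, set $B'_Q$ for the family after $s-1$ operations. We may also assume, as part of a strengthened induction, that $B'_Q=\overline{B'}{}^{[Q]}R$ for a fixed homogeneous ideal $\overline{B'}\subseteq S$, or more generally $B'_Q=(\overline{B'}{}^{[Q]}:_S g^Q)R$ with $g$ homogeneous.

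If $\varepsilon_s=0$, then
$$
B_Q=B'_Q+f_s^QK_s^{[Q]}=\bigl(\overline{B'}+f_s\overline K_s\bigr)^{[Q]}R
$$
whenever $B'_Q$ is a Frobenius power, since Frobenius powers commute with sums and products. Lemma~\ref{lem:homogeneous-localization} lets us perform these operations in $S$ and then localize.

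If $\varepsilon_s=1$, we have $B_Q=(B'_Q:f_s^Q)+(f_s^Q)$. Over the regular locus this does not simplify automatically. Here we invoke Vraciu's Observation~2.8, which states that LC for the colon-and-add family follows from LC for the Frobenius families $\{(\overline{B'}+(f_s))^{[Q]}\}$ and $\{\overline{B'}{}^{[Q]}\}$. Concretely, the exact sequence
$$
0\to R/(B'_Q:f_s^Q)\xrightarrow{f_s^Q} R/B'_Q\to R/(B'_Q+(f_s^Q))\to 0
$$
together with the analogous sequence for $(B'_Q:f_s^Q)+(f_s^Q)$ bounds the annihilator exponent of $H^0_{\mathfrak m}(R/B_Q)$. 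The bound is linear in $Q$ and comes from the annihilator exponents of $H^0_{\mathfrak m}$ of the fixed Frobenius families $(\overline{B'}+f_s\overline{K}_s)^{[Q]}$ and $\overline{B'}{}^{[Q]}$, plus at most $\deg f_s\cdot Q$ from the twist. Each of these families is LC by Lemma~\ref{lem:fixed-homogeneous-LC} applied to the fixed homogeneous ideals. Since every ideal involved is homogeneous, the annihilation can be checked in graded degrees. We then localize using $T^{-1}H^0_{S_+}\cong H^0_{\mathfrak m}$, exactly as at the end of the proof of Lemma~\ref{lem:fixed-homogeneous-LC}.

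The main obstacle is the colon step. Constants do not simply add through colon ideals, because $H^0_{\mathfrak m}(R/(B'_Q:f^Q))$ is not directly a submodule of $H^0_{\mathfrak m}(R/B'_Q)$. To handle this, we first treat the colon family $\{(\overline{B'}{}^{[Q]}:f^Q)\}$ itself. Sheafifying on $X$ and using Kunz flatness, its saturation defect is the $H^1$ of a Frobenius pullback of a fixed kernel sheaf, namely the kernel of multiplication by $f$ on $\mathcal O_X/\widetilde{\overline{B'}}$ pulled back. That $H^1$ vanishes in degrees at least $C Q$ by Lemma~\ref{lem:linear-frobenius-serre-vanishing}. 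This geometric argument reproves the needed LC for each operation directly, and is the route we would try first if Observation~2.8 does not apply verbatim.
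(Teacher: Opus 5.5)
There is a genuine gap, located in the $\ast_1$ step.

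\textbf{The induction does not propagate.} Your induction on $s$ rests on a ``strengthened'' hypothesis: that the previous branch is $\overline{B'}{}^{[Q]}R$, or $(\overline{B'}{}^{[Q]}:_S g^Q)R$, for a fixed homogeneous $\overline{B'}$. This is never established, and it fails in general. After one $\ast_1$ the branch is $(\overline{B'}{}^{[Q]}:f^Q)+(f^Q)$. Since $R$ is not regular, colons do not commute with Frobenius powers, so this branch has neither form. Consequently your $\varepsilon_s=0$ formula and your $\ast_1$ analysis only cover strings with no earlier $\ast_1$. A $\ast_0$ following a $\ast_1$, or two $\ast_1$'s, are not handled.

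\textbf{The $\ast_1$ step fails even in the good case.} Suppose $B'_Q=\overline{B'}{}^{[Q]}R$ and $B_Q=(B'_Q:f^Q)+(f^Q)$. The relevant sequence is $0\to R/(B'_Q:f^{2Q})\xrightarrow{f^Q}R/(B'_Q:f^Q)\to R/B_Q\to 0$. Since $H^0_{\mathfrak m}$ is only left exact, this yields only $H^0_{\mathfrak m}(R/(B'_Q:f^Q))\to H^0_{\mathfrak m}(R/B_Q)\to H^1_{\mathfrak m}(R/(B'_Q:f^{2Q}))$. Nothing in your argument controls the $H^1_{\mathfrak m}$ term, and LC for the families $(\overline{B'}+(f_s))^{[Q]}$ and $\overline{B'}{}^{[Q]}$ does not suffice.

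The ``main obstacle'' you name is not one. Your own displayed sequence shows that multiplication by $f^Q$ embeds $H^0_{\mathfrak m}(R/(B'_Q:f^Q))$ into $H^0_{\mathfrak m}(R/B'_Q)$. Your geometric fallback treats only this harmless bare colon family. It never reaches $(B'_Q:f^Q)+(f^Q)$ or the later operations.

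\textbf{The missing idea.} Induct on the number of $\ast_1$'s rather than on $s$, and remove the \emph{last} one while absorbing the trailing $\ast_0$'s. Let $j$ be the last index with $\varepsilon_j=1$, let $C_Q$ be the branch before step $j$, and put $L_Q=\sum_{i>j}f_i^QK_i^{[Q]}$. Then $B_Q=(C_Q:f_j^Q)+(f_j^Q)+L_Q$, and multiplication by $f_j^Q$ is an injection $R/B_Q\hookrightarrow R/(C_Q+(f_j^{2Q})+f_j^QL_Q)$. The target is the branch $C_Q\ast_0 f_j^Q(K'_j)^{[Q]}$ with $K'_j=(f_j)+\sum_{i>j}f_iK_i$, which has one fewer $\ast_1$. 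Induction and left exactness of $H^0_{\mathfrak m}$ then give LC. Note that the family that appears involves $f_j^{2Q}$, not $f_j^Q$; the paper uses this injection and attributes it to Vraciu's Observation~2.8.

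\textbf{A repair of your geometric idea.} Your instinct can also be made into a complete proof by a sandwich argument. Let $\overline B_Q\subseteq S$ be the homogeneous branch before localization, and let $\overline J\subseteq S$ be the same branch computed with $Q=1$.
\begin{itemize}
\item In any ring of characteristic $p$ one has $(I:f)^{[Q]}\subseteq(I^{[Q]}:f^Q)$, so $\overline J^{[Q]}\subseteq\overline B_Q$.
\item Frobenius is flat on the regular $X$, so locally $(\mathcal I^{[Q]}:f^Q)=(\mathcal I:f)^{[Q]}$. Hence $\widetilde{\overline B_Q}=\widetilde{\overline J^{[Q]}}$, and Lemma~\ref{lem:full-section-ring-saturation} gives $\overline B_Q^{\rm sat}=(\overline J^{[Q]})^{\rm sat}$.
\item Therefore $H^0_{S_+}(S/\overline B_Q)$ is a quotient of $H^0_{S_+}(S/\overline J^{[Q]})$.
\end{itemize}
Lemma~\ref{lem:fixed-homogeneous-LC}, applied to the single fixed ideal $\overline J$, then yields LC directly.
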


\begin{proof}
Induct on the number of occurrences of $\ast_1$.  If there are none, then
$$
        B_Q=(A+f_1K_1+\cdots+f_sK_s)^{[Q]},
$$
and Lemma~\ref{lem:fixed-homogeneous-LC} applies.

Let $j$ be the last index with $\varepsilon_j=1$.  Write $C_Q$ for the branch before the $j$th operation and put
$L_Q=\sum_{i>j}f_i^QK_i^{[Q]}$.  Thus
$$
        B_Q=(C_Q:f_j^Q)+(f_j^Q)+L_Q.
$$
Multiplication by $f_j^Q$ gives an injection
$$
        R/B_Q\lhook\joinrel\longrightarrow
        R/\bigl(C_Q+(f_j^{2Q})+f_j^QL_Q\bigr).
$$
Indeed, the class of $r$ lies in the kernel precisely when
$r\in(C_Q:f_j^Q)+(f_j^Q)+L_Q$.  If
$$
        K'_j=(f_j)+\sum_{i>j}f_iK_i,
$$
then the denominator on the right is $C_Q+f_j^Q(K'_j)^{[Q]}$.  It is a branch with one fewer $\ast_1$ and still comes from fixed homogeneous data.  The target family satisfies LC by induction, and left exactness of $H^0_{\mathfrak m}(-)$ gives the same conclusion for $\{B_Q\}$.  This is the reduction in \cite[Observation~2.8]{Vraciu}.
\end{proof}

\begin{lemma}\label{lem:simultaneous-homogeneous-saturation}
Let $\{B_Q^{(\nu)}\}_{Q=p^e}$, $1\leq\nu\leq r$, be finitely many branch families as in Lemma~\ref{lem:homogeneous-branch-LC}.  There is a homogeneous element $y\in S_+$, independent of $Q$ and of $\nu$, such that
$$
        (B_Q^{(\nu)}:y^Q)
        =(B_Q^{(\nu)}:y^{2Q})
        =(B_Q^{(\nu)})^{\rm sat}
$$
for every $\nu$ and every $Q=p^e$.  In particular,
$$
        H^0_{\mathfrak m}(R/B_Q^{(\nu)})
        =\frac{(B_Q^{(\nu)}:y^Q)}{B_Q^{(\nu)}}.
$$
Moreover, $y^Q$ avoids every nonmaximal associated prime of $R/B_Q^{(\nu)}$.
\end{lemma}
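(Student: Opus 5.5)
The plan is to choose $y$ as a power of a single homogeneous element that avoids a countable family of homogeneous primes, and then use the LC bounds of Lemma~\ref{lem:homogeneous-branch-LC} to control the saturation.

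First, for each $\nu$ and each $Q=p^e$, let $\mathcal P_{Q,\nu}$ be the set of homogeneous associated primes of the graded module $S/\overline{B}_Q^{(\nu)}$, where $\overline B_Q^{(\nu)}\subseteq S$ is the homogeneous ideal obtained by performing the same sums, Frobenius powers and colons in $S$. By Lemma~\ref{lem:homogeneous-localization} these operations commute with localization, so $\overline B_Q^{(\nu)}R=B_Q^{(\nu)}$. The same lemma shows that the associated primes of $R/B_Q^{(\nu)}$ are the $PR$ with $P\in\mathcal P_{Q,\nu}$ and $P\subseteq S_+$. Every $P\in\mathcal P_{Q,\nu}$ is homogeneous, hence contained in $S_+$. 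Collect all $P\in\bigcup_{Q,\nu}\mathcal P_{Q,\nu}$ with $P\neq S_+$. This is a countable family of homogeneous primes properly contained in $S_+$, since each module has finitely many associated primes and there are countably many $Q$ and finitely many $\nu$. Lemma~\ref{lem:homogeneous-countable-prime-avoidance} with $n=1$ then gives a homogeneous $h\in S_+$ outside all of them. By primality, every power $h^j$ also avoids them, so $y^Q$ avoids every nonmaximal associated prime of $R/B_Q^{(\nu)}$. This proves the last assertion.

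Next, set $y=h^{c}$ for a suitable exponent $c$ independent of $Q$ and $\nu$. By Lemma~\ref{lem:homogeneous-branch-LC}, each family satisfies LC: there is $C_\nu$ with $\mathfrak m^{C_\nu Q}H^0_{\mathfrak m}(R/B_Q^{(\nu)})=0$. Let $C=\max_\nu C_\nu$ and choose $c\geq C/\deg h$. Then $y^Q=h^{cQ}\in\mathfrak m^{CQ}$, so $y^Q$ kills $H^0_{\mathfrak m}(R/B_Q^{(\nu)})=(B_Q^{(\nu)})^{\rm sat}/B_Q^{(\nu)}$. This gives the containment $(B_Q^{(\nu)})^{\rm sat}\subseteq(B_Q^{(\nu)}:y^Q)\subseteq(B_Q^{(\nu)}:y^{2Q})$.

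For the reverse containment, suppose $y^{2Q}r\in B_Q^{(\nu)}$, and let $\bar r$ be its image in $M=R/B_Q^{(\nu)}$. Since $y$ lies outside every nonmaximal associated prime of $M$, multiplication by $y$ is injective on $M/H^0_{\mathfrak m}(M)$. Indeed, the associated primes of $M/H^0_{\mathfrak m}(M)$ are exactly the nonmaximal associated primes of $M$, and $y$ avoids all of them. Hence $y^{2Q}\bar r=0$ forces $\bar r\in H^0_{\mathfrak m}(M)$, that is, $r\in(B_Q^{(\nu)})^{\rm sat}$. The three ideals therefore coincide, and the displayed formula for $H^0_{\mathfrak m}$ follows.

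The main obstacle is the countability and $Q$-independence bookkeeping. One must check that the branch ideals are genuinely localizations of homogeneous ideals of $S$, so that all associated primes are homogeneous and the countable avoidance lemma applies. One must also check that the single exponent $c$ works uniformly for all $Q$. The latter is exactly where the linear LC bound is needed, rather than an unspecified $Q$-dependent annihilating power.
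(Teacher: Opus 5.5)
Your proposal is correct and follows essentially the same route as the paper: avoid the countably many nonmaximal homogeneous associated primes of the unlocalized branches, raise to a power so that $y^Q\in\mathfrak m^{CQ}$ kills $H^0_{\mathfrak m}$ by the uniform LC bound, and conclude the colon equalities. The only differences are cosmetic. You take $n=1$ and then pass to $h^c$, whereas the paper applies the avoidance lemma directly with $n=C$. You also prove the containment $(B_Q^{(\nu)}:y^{2Q})\subseteq(B_Q^{(\nu)})^{\rm sat}$ directly, via $y$ being a nonzerodivisor on $M/H^0_{\mathfrak m}(M)$, where the paper cites Vraciu's Facts~1.5(b).
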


\begin{proof}
Choose an integer $C\geq1$ dominating the LC constants of the finitely many families.  For each $\nu$ and $Q$, let $\overline B_Q^{(\nu)}\subseteq S$ be the homogeneous branch formed before localization.  The set
$$
        \bigcup_{\nu,Q}
        \{\,P\in\operatorname{Ass}_S(S/\overline B_Q^{(\nu)})
        \mid P\subsetneq S_+\,\}
$$
is countable.  Lemma~\ref{lem:homogeneous-countable-prime-avoidance} gives a homogeneous
$y\in S_+^C$ outside every prime in this union.  By Lemma~\ref{lem:homogeneous-localization}, $y^Q$ avoids the nonmaximal associated primes of every localized quotient.  Since
$y^Q,y^{2Q}\in\mathfrak m^{CQ}$ and
$$
        \mathfrak m^{CQ}H^0_{\mathfrak m}(R/B_Q^{(\nu)})=0,
$$
the two colon equalities follow from \cite[Facts~1.5(b)]{Vraciu}.  The description of zeroth local cohomology is the definition of saturation.
\end{proof}

\begin{lemma}\label{lem:homogeneous-vraciu-completion}
Let $\{B_Q\}_{Q=p^e}$ be an admissible branch family formed from fixed homogeneous data, and suppose that
$\dim(R/B_Q)=r$ for every $Q$.  There are homogeneous elements
$z_1,\ldots,z_r\in S_+$, independent of $Q$, and integers
$c_\eta$, $\eta\in\{0,1\}^r$, such that
$$
\lambda_R\bigl(H^0_{\mathfrak m}(R/B_Q)\bigr)
=
\sum_{\eta\in\{0,1\}^r}c_\eta
\lambda_R\left(
R/\bigl(B_Q\ast_{\eta_1}z_1^Q\ast_{\eta_2}\cdots
\ast_{\eta_r}z_r^Q\bigr)
\right).
$$
Every branch on the right is $\mathfrak m$-primary, admissible, and obtained by localizing fixed homogeneous data.
\end{lemma}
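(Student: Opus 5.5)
We argue by induction on $r=\dim(R/B_Q)$, following the local induction of \cite[Theorem~2.1]{Vraciu}, and at each stage replace Vraciu's local choice of auxiliary element by a homogeneous one supplied by Lemma~\ref{lem:simultaneous-homogeneous-saturation}. If $r=0$, then $B_Q$ is $\mathfrak m$-primary, $H^0_{\mathfrak m}(R/B_Q)=R/B_Q$, and we take the empty string with coefficient $1$. For $r\geq1$, the family $\{B_Q\}$ satisfies LC by Lemma~\ref{lem:homogeneous-branch-LC}. We then apply Lemma~\ref{lem:simultaneous-homogeneous-saturation} to the finitely many branch families that will occur in the recursion below, which are all built from the fixed homogeneous data together with the elements chosen so far. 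This produces a homogeneous $z_1\in S_+$, independent of $Q$, such that $z_1^Q$ and $z_1^{2Q}$ both compute the saturation and $z_1^Q$ avoids every nonmaximal associated prime of $R/B_Q$. The set of branch families is finite because it is indexed by binary strings of length at most $r$, so we may choose all the $z_i$ at once by a single countable avoidance. Concretely, we enumerate the countably many homogeneous nonmaximal associated primes of all branches $\overline B_Q\ast_{\eta_1}\overline z_1^{\,Q}\cdots$, choosing the $z_i$ in order and using Lemma~\ref{lem:homogeneous-countable-prime-avoidance} at each step.

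The key identity is the one-step formula from Vraciu's proof. With $x=z_1^Q$, the exact sequences
\[
0\to \frac{(B_Q:x)}{B_Q}\to \frac{R}{B_Q}\xrightarrow{\,x\,} \frac{R}{B_Q}\to\frac{R}{B_Q+(x)}\to0,
\qquad
0\to \frac{R}{(B_Q:x)}\xrightarrow{\,x\,}\frac{R}{B_Q}\to \frac{R}{B_Q+(x)}\to0,
\]
are combined with $(B_Q:x)=(B_Q:x^2)=B_Q^{\rm sat}$. Taking $H^0_{\mathfrak m}$ and using that $x$ is a nonzerodivisor modulo the nonmaximal primes yields
\[
\lambda\bigl(H^0_{\mathfrak m}(R/B_Q)\bigr)
= a\,\lambda\bigl(H^0_{\mathfrak m}(R/(B_Q\ast_0 x))\bigr)+b\,\lambda\bigl(H^0_{\mathfrak m}(R/(B_Q\ast_1 x))\bigr)
\]
with fixed integers $a,b$, as in \cite[Theorem~2.1]{Vraciu}. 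Example~\ref{ex:one-step-vraciu} already shows this pattern: there the coefficients are $2$ and $-1$, after one further step to reach primary ideals. I will quote this identity from Vraciu rather than rederive it. The admissibility of $z_1^Q$ ensures that both new branches have dimension $r-1$.

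Both new branches have the form $B_Q\ast_{\varepsilon}z_1^QK^{[Q]}$ with $K=R$. They are therefore again branch families from fixed homogeneous data, and Lemma~\ref{lem:homogeneous-localization} shows that they are localizations of the homogeneous branches $\overline B_Q\ast_\varepsilon \overline z_1^{\,Q}$. Applying the induction hypothesis to each of them, with the already chosen $z_2,\ldots,z_r$, and multiplying out the coefficients gives the integers $c_\eta$.

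The main obstacle is the uniformity in $Q$: the element $z_i$ must work simultaneously for every $Q$ and every branch created later. The LC bounds of Lemma~\ref{lem:homogeneous-branch-LC} handle the colon stabilization, with one constant per family and finitely many families. Uncountability of $k$ through Lemma~\ref{lem:homogeneous-countable-prime-avoidance} handles avoidance of the countably many primes. I expect the delicate bookkeeping to be checking that each later family in the tree is covered by Lemma~\ref{lem:homogeneous-branch-LC} before $z_i$ is chosen, which is why the $z_i$ are chosen sequentially and each is required to avoid the primes of all branches formed from $z_1,\ldots,z_{i-1}$.
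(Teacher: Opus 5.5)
Your proposal is correct and is essentially the paper's argument. The elements $z_{i+1}$ are chosen sequentially, not all at once, since the level-$i$ branches depend on $z_1,\ldots,z_i$: each is obtained from Lemma~\ref{lem:simultaneous-homogeneous-saturation} applied to the finitely many branch families at level $i$, Vraciu's one-step identity is applied at every node, and admissibility lowers the dimension, so the coefficients do not depend on $Q$.

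For precision, the identity is \cite[Lemma~2.3]{Vraciu} with coefficients $1$ and $-1$, namely $\lambda_R(H^0_{\mathfrak m}(R/C_Q))=\lambda_R(H^0_{\mathfrak m}(R/(C_Q\ast_0 z^Q)))-\lambda_R(H^0_{\mathfrak m}(R/(C_Q\ast_1 z^Q)))$. It needs only $(C_Q:z^Q)=(C_Q:z^{2Q})=C_Q^{\rm sat}$; prime avoidance is what gives admissibility and the dimension drop. Also, the coefficients $2,-1$ in Example~\ref{ex:one-step-vraciu} do not come from this step: they appear only after the later $\ast_1$-elimination of Proposition~\ref{prop:vraciu-reduction-enriques}.
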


\begin{proof}
There is nothing to prove when $r=0$.  Suppose that the first $i$ completion elements have been chosen.  Only finitely many branch families occur at that level.  Apply Lemma~\ref{lem:simultaneous-homogeneous-saturation} to choose $z_{i+1}$ for all of them at once.  For each current branch $C_Q$, Lemma~2.3 of \cite{Vraciu} gives
$$\lambda_R\bigl(H^0_{\mathfrak m}(R/C_Q)\bigr)=\lambda_R\bigl(H^0_{\mathfrak m}(R/(C_Q\ast_0 z_{i+1}^Q))\bigr)-\lambda_R\bigl(H^0_{\mathfrak m}(R/(C_Q\ast_1 z_{i+1}^Q))\bigr).$$
The colon hypotheses in that lemma hold because
$(C_Q:z_{i+1}^Q)=(C_Q:z_{i+1}^{2Q})=C_Q^{\rm sat}$.
The element $z_{i+1}^Q$ avoids every nonmaximal associated prime, so both successors are admissible and have dimension one less.  Repeating this construction $r$ times gives the asserted identity.  At the last stage all quotients have dimension zero, hence are $\mathfrak m$-primary; their zeroth local cohomology is the quotient itself.  The coefficients arise from a finite sequence of additions and subtractions and do not depend on $Q$.
\end{proof}

\begin{proposition}\label{prop:vraciu-reduction-enriques}
There exist fixed $\mathfrak m$-primary ideals $I_1,\ldots,I_t\subseteq R$, each generated by images of homogeneous elements of $S$, and integers $a_1,\ldots,a_t$ such that
$$
\lambda_R\bigl(H^0_{\mathfrak m}(R/J^{[Q]}R)\bigr)
=
\sum_{i=1}^t a_i\lambda_R(R/I_i^{[Q]})
$$
for every $Q=p^e$.
\end{proposition}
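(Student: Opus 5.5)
The plan is to apply Lemma~\ref{lem:homogeneous-vraciu-completion} directly to the one-step family $B_Q=J^{[Q]}R$ and then rewrite every branch on the right-hand side as a Frobenius power of a fixed ideal. We start with the trivial branch: the family $\{J^{[Q]}R\}$ is the string of length zero with $\overline A=J$. It satisfies LC by Lemma~\ref{lem:fixed-homogeneous-LC}, and it is admissible vacuously. Since $\dim(R/J^{[Q]}R)=\dim(R/JR)=4$ for every $Q$ (the radical does not change), Lemma~\ref{lem:homogeneous-vraciu-completion} with $r=4$ gives homogeneous elements $z_1,\dots,z_4\in S_+$, independent of $Q$, and integers $c_\eta$ such that
$$
\lambda_R\bigl(H^0_{\mathfrak m}(R/J^{[Q]}R)\bigr)=\sum_{\eta\in\{0,1\}^4}c_\eta\,\lambda_R\bigl(R/B_Q^{\eta}\bigr),
$$
where each $B_Q^\eta=J^{[Q]}R\ast_{\eta_1}z_1^Q\ast_{\eta_2}\cdots\ast_{\eta_4}z_4^Q$ is $\mathfrak m$-primary.

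The remaining issue is that a branch involving $\ast_1$ contains colons $(C_Q:z_j^Q)$, so it is not visibly a Frobenius power of a fixed ideal. To handle this, I will use Vraciu's identity in the form of her Theorem~2.1. For each branch with some $\ast_1$, take the last colon step $B_Q=(C_Q:z_j^Q)+(z_j^Q)+L_Q$, as in the proof of Lemma~\ref{lem:homogeneous-branch-LC}. Multiplication by $z_j^Q$ gives the exact sequence
$$
0\to R/B_Q\xrightarrow{\,z_j^Q\,}R/\bigl(C_Q+z_j^Q(K_j')^{[Q]}\bigr)\to R/\bigl(C_Q+(z_j^Q)\bigr)\to0 .
$$
It is exact once $(C_Q:z_j^Q)=(C_Q:z_j^{2Q})$, and Lemma~\ref{lem:simultaneous-homogeneous-saturation} ensures this equality. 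Compatibility of the trailing $L_Q$ terms also has to be checked. The sequence expresses $\lambda(R/B_Q)$ as the difference of the lengths of two branches, each containing one fewer $\ast_1$, both still $\mathfrak m$-primary and built from fixed homogeneous data.

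Iterating this by induction on the number of $\ast_1$'s eliminates every colon. Each length is then of the form $\lambda_R\bigl(R/(A+\sum_i f_iK_i)^{[Q]}\bigr)$, with fixed homogeneous ideals in place of the $\ast_0$ data, and $(A+\sum f_iK_i)^{[Q]}=A^{[Q]}+\sum f_i^QK_i^{[Q]}$. Setting $I_i$ to be these finitely many ideals, generated by images of homogeneous elements, and collecting the integer coefficients gives the proposition.

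I expect the main obstacle to be the exactness of this colon-elimination step uniformly in $Q$. Two points need care. First, the left map must be injective with the stated kernel; this is the computation already done in Lemma~\ref{lem:homogeneous-branch-LC}. Second, the cokernel length must be identified correctly. The ideal $C_Q$ is itself not $\mathfrak m$-primary: it only becomes so after adding the later $z$'s. So the identification should be done after adjoining $L_Q$, or equivalently by following Vraciu's Lemma~2.3 bookkeeping at the level of $H^0_{\mathfrak m}$, where the saturation equalities from Lemma~\ref{lem:simultaneous-homogeneous-saturation} are exactly the hypotheses needed.
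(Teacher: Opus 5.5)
Your first step (completing $J^{[Q]}R$ with Lemma~\ref{lem:homogeneous-vraciu-completion}) and your colon-removal sequence are the same as the paper's. The sequence is in fact exact with no hypothesis, because the kernel computation never uses $(C_Q:z_j^Q)=(C_Q:z_j^{2Q})$.

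The gap is your claim that the two new ideals $D_Q=C_Q+z_j^Q(K_j')^{[Q]}$ and $E_Q=C_Q+(z_j^Q)$ are $\mathfrak m$-primary with one fewer $\ast_1$. That holds only when $j$ is the last position. In general both have radical $\sqrt{C_Q+(z_j)}$, so $\dim R/D_Q=\dim R/E_Q=4-j$. For instance, $\eta=(1,0,0,0)$ gives $J^{[Q]}R+z_1^Q(z_1^Q,z_2^Q,z_3^Q,z_4^Q)$ and $J^{[Q]}R+(z_1^Q)$, both of dimension $3$. Their colengths are infinite, so the sequence does not write $\lambda_R(R/B_Q)$ as a difference of two finite lengths. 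Adjoining $L_Q$ to both ends does not repair this: the kernel would become $(C_Q+L_Q:z_j^Q)+(z_j^Q)$, which need not equal $(C_Q:z_j^Q)+(z_j^Q)+L_Q$. So that variant is not ``equivalent'' to the $H^0_{\mathfrak m}$ bookkeeping.

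The correct reading is the one you mention last. Since $R/B_Q$ has finite length, $H^1_{\mathfrak m}(R/B_Q)=0$, and so $\lambda_R(R/B_Q)=\lambda_R(H^0_{\mathfrak m}(R/D_Q))-\lambda_R(H^0_{\mathfrak m}(R/E_Q))$. But then $D_Q=C_Q\ast_0 z_j^Q(K_j')^{[Q]}$ and $E_Q=C_Q\ast_0 z_j^Q$ are admissible branches of dimension $4-j$ (admissible because $z_j$ avoids the nonmaximal associated primes of $R/C_Q$ and $z_j\in K_j'$). They must be \emph{recompleted} by Lemma~\ref{lem:homogeneous-vraciu-completion}, and recompletion introduces fresh $\ast_1$ operations in positions $j+1,\ldots,4$. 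In the example above, the string $(1,0,0,0)$ produces $(0,1,1,1)$, so the number of $\ast_1$'s rises from one to three and your induction does not terminate.

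What is missing is a measure that does decrease. The paper inducts on $\mathcal N(\varepsilon)=\sum_i\varepsilon_i2^{d-i}$. Every recompleted string agrees with $\varepsilon$ before $j$, has $0$ in position $j$, and is arbitrary afterwards, so its weight is at most $\mathcal N(\varepsilon)-1$. This also forces you to run the induction for branches with nontrivial auxiliary ideals $K_i$ (such as $(K_j')^{[Q]}$) from the outset, not only for the strings $\ast_{\eta_i}z_i^Q$ produced by the first completion.
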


\begin{proof}
Set $A=JR$ and $d=\dim(R/A)$.  Lemma~\ref{lem:homogeneous-vraciu-completion}, applied to the family $A^{[Q]}=J^{[Q]}R$, expresses
$\lambda_R(H^0_{\mathfrak m}(R/A^{[Q]}))$ as a finite integer linear combination of lengths of complete admissible branches.  It is enough to treat the following more general one:
$$
        T_Q=A_0^{[Q]}\ast_{\varepsilon_1}f_1^QK_1^{[Q]}
        \ast_{\varepsilon_2}\cdots
        \ast_{\varepsilon_d}f_d^QK_d^{[Q]},
        \qquad \varepsilon_i\in\{0,1\},
$$
where $A_0$, the $f_i$, and the $K_i$ come from fixed homogeneous data,
$\dim(R/A_0)=d$, and $T_Q$ is $\mathfrak m$-primary for every $Q$.  We shall remove the occurrences of $\ast_1$ without losing either homogeneity or independence of $Q$.

Put
$$
        \mathcal N(\varepsilon_1,\ldots,\varepsilon_d)
        =\sum_{i=1}^d\varepsilon_i2^{d-i}.
$$
We prove by induction on $\mathcal N$ that $\lambda_R(R/T_Q)$ is an integer linear combination, independent of $Q$, of ordinary Hilbert--Kunz functions of fixed $\mathfrak m$-primary ideals generated by homogeneous elements.  If $\mathcal N=0$, then
$$
        T_Q=(A_0+f_1K_1+\cdots+f_dK_d)^{[Q]}.
$$
The ideal inside the Frobenius power is fixed and $\mathfrak m$-primary, since the branch is complete and the assertion includes $Q=1$.

Assume $\mathcal N>0$, and let $j$ be the last index with $\varepsilon_j=1$.  Write $C_Q$ for the branch preceding the $j$th operation and
$L_Q=\sum_{i>j}f_i^QK_i^{[Q]}$.  Then
$$
        T_Q=(C_Q:f_j^Q)+(f_j^Q)+L_Q.
$$
Set
$$D_Q=C_Q+(f_j^{2Q})+f_j^QL_Q,\qquad E_Q=C_Q+(f_j^Q).$$
Multiplication by $f_j^Q$ gives a short exact sequence
$$
        0\longrightarrow R/T_Q
        \xrightarrow{\ \cdot f_j^Q\ }R/D_Q
        \longrightarrow R/E_Q\longrightarrow0.
$$
The left term has finite length.  Since its first local cohomology vanishes, applying $H^0_{\mathfrak m}(-)$ preserves exactness and yields
$$
        \lambda_R(R/T_Q)
        =\lambda_R\bigl(H^0_{\mathfrak m}(R/D_Q)\bigr)
        -\lambda_R\bigl(H^0_{\mathfrak m}(R/E_Q)\bigr).
$$

Let
$$
        K'_j=(f_j)+\sum_{i>j}f_iK_i.
$$
Then $D_Q=C_Q\ast_0f_j^Q(K'_j)^{[Q]}$ and
$E_Q=C_Q\ast_0f_j^Q$.  Both are branch families with fixed homogeneous data.  They are admissible: $f_j$ avoids the nonmaximal associated primes of $R/C_Q$, and $K'_j$ contains $f_j$.  Their dimension is $d-j$.  Complete each family by Lemma~\ref{lem:homogeneous-vraciu-completion}.  Every resulting binary string agrees with $\varepsilon$ before $j$, has zero in the $j$th position, and is arbitrary afterwards.  Hence its weight is at most
$$
        \sum_{i<j}\varepsilon_i2^{d-i}
        +\sum_{i>j}2^{d-i}
        =\mathcal N(\varepsilon)-1.
$$
The induction hypothesis applies to every completed branch.  The displayed length identity then proves the claim for $T_Q$.

Applying the claim to the finitely many branches supplied by Lemma~\ref{lem:homogeneous-vraciu-completion} and collecting terms gives fixed $\mathfrak m$-primary ideals $I_1,\ldots,I_t$ and integers $a_1,\ldots,a_t$ with the required equality.  All auxiliary elements and ideals were chosen homogeneously before localization, so each $I_i$ is generated by images of homogeneous elements of $S$.
\end{proof}

\begin{lemma}\label{lem:homogeneous-contraction-HK}
Let $I_R\subseteq R$ be an $\mathfrak m$-primary ideal generated by images of homogeneous elements $f_1,\ldots,f_t\in S$, and put
$I=(f_1,\ldots,f_t)\subseteq S$.
Then $I$ is $S_+$-primary, $IR=I_R$, and $I=I_R\cap S$.  For every $Q=p^e$, the canonical map
$S/I^{[Q]}\longrightarrow R/I_R^{[Q]}$
is an isomorphism, and hence
$\lambda_S(S/I^{[Q]})=\lambda_R(R/I_R^{[Q]})$.
Consequently,
$e_{\rm HK}(IS_{S_+})=e_{\rm HK}(I_R)$.
\end{lemma}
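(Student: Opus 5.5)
The plan is to reduce everything to the graded statement that a homogeneous $S_+$-primary ideal has a quotient of finite length concentrated in finitely many degrees, which already "lives at" $S_+$.

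\textbf{Step 1: $I$ is $S_+$-primary and $IR=I_R$.} The equality $IR=I_R$ holds by definition, since $I_R$ is generated by the images of the $f_i$. Since $I$ is homogeneous, every minimal prime of $I$ is homogeneous (Lemma~\ref{lem:homogeneous-localization}), hence contained in $S_+$. Its extension to $R$ contains $I_R$, which is $\mathfrak m$-primary, so the localized prime equals $\mathfrak m$ and the prime is $S_+$. Thus $\sqrt I=S_+$, and $I$ is $S_+$-primary. The same argument applies to $I^{[Q]}$, because $\sqrt{I^{[Q]}}=\sqrt I$.

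\textbf{Step 2: the map $S/I^{[Q]}\to R/I_R^{[Q]}$ is an isomorphism.} The module $M=S/I^{[Q]}$ is a finitely generated graded module supported only at $S_+$, so it has finite length. Its localization is $M_{S_+}=R/I^{[Q]}R=R/I_R^{[Q]}$, using that Frobenius powers commute with localization (Lemma~\ref{lem:homogeneous-localization}). Every element $t\in T=S\setminus S_+$ acts bijectively on $M$. Indeed, write $t=c+t_+$ with $c\in k^\times$ and $t_+\in S_+$; since $S_+^nM=0$ for some $n$, the element $t_+$ is nilpotent on $M$, so $t$ acts as a unit. Hence $M\to T^{-1}M$ is an isomorphism. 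Lemma~\ref{lem:graded-local-length}, or this isomorphism directly, then gives $\lambda_S(S/I^{[Q]})=\lambda_R(R/I_R^{[Q]})$.

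\textbf{Step 3: the contraction $I=I_R\cap S$.} Take $Q=1$ in Step 2. Injectivity of $S/I\to R/I_R$ says exactly that $I_R\cap S\subseteq I$, and the reverse inclusion is trivial.

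\textbf{Step 4: the Hilbert--Kunz multiplicities.} Both rings have dimension $\dim S=\dim R$, because $S_+$ is a maximal ideal of maximal height in the standard graded domain $S$. Dividing the equal lengths by $Q^{\dim R}$ and letting $Q\to\infty$ gives $e_{\rm HK}(IS_{S_+})=e_{\rm HK}(I_R)$, where $e_{\rm HK}(IS_{S_+})$ is computed from the same lengths.

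The only mildly delicate point is Step 1, namely ensuring that $I$ has no embedded or minimal homogeneous prime other than $S_+$. This is handled by the observation that all homogeneous primes lie in $S_+$, so no prime can be lost under localization.
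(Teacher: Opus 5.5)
Your proof is correct, and Steps 2--4 coincide with the paper's argument. Once $\sqrt{I^{[Q]}}=S_+$, every element of $S\setminus S_+$ is a unit modulo $I^{[Q]}$, so $S/I^{[Q]}\to R/I_R^{[Q]}$ is an isomorphism, and taking $Q=1$ gives $I=I_R\cap S$.

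The only difference is in how you show that $I$ is $S_+$-primary.
\begin{itemize}
\item The paper argues elementwise. From $(S_+R)^n\subseteq I_R$, each homogeneous $g\in(S_+)^n$ satisfies $ug\in I$ for some $u=u_0+u_+$ with $u_0\in k^\times$. Since $I$ is homogeneous, the component of $ug$ of degree $\deg g$, namely $u_0g$, lies in $I$. This gives $(S_+)^n\subseteq I$ directly, with the same exponent as for $I_R$.
\item You instead argue through minimal primes. They are homogeneous, hence contained in $S_+$, hence they survive localization, and so each must equal $S_+$.
\end{itemize}
Your route is conceptually short but relies on the homogeneity of associated primes of graded modules. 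The paper's route is entirely elementary and yields an explicit power of $S_+$ inside $I$.

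One small point to add: state that $I\neq S$, which holds because $I_R\neq R$. Otherwise $I$ has no minimal primes, and your argument would not yield $\sqrt I=S_+$.
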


\begin{proof}
By construction, $IR=I_R$.  Since $I_R$ is $\mathfrak m$-primary, there is an $n$ with $(S_+R)^n\subseteq I_R$.  Let $g$ be homogeneous in $(S_+)^n$.  Then $g/1\in I_R$, so some $u\in S\setminus S_+$ satisfies $ug\in I$.  Write $u=u_0+u_+$ with $u_0\in k^\times$ and $u_+\in S_+$.  Because $I$ and $g$ are homogeneous, the component of $ug$ of degree $\deg g$ is $u_0g$ and belongs to $I$.  Thus $(S_+)^n\subseteq I$.  The ideal $I$ is proper because $IR\neq R$.  Since it is homogeneous and $S_0=k$, one has $I\subseteq S_+$, and therefore $\sqrt I=S_+$.  Thus $I$ is $S_+$-primary.

For every $Q=p^e$, one has $I^{[Q]}R=I_R^{[Q]}$ and $\sqrt{I^{[Q]}}=S_+$.  Hence the image of $S_+$ in $S/I^{[Q]}$ is nilpotent, so every element of $S\setminus S_+$ is a unit modulo $I^{[Q]}$.  Localization therefore gives
$S/I^{[Q]}\xrightarrow{\sim}R/I_R^{[Q]}$.
Taking $Q=1$ shows that its kernel is $I_R\cap S$, so $I=I_R\cap S$.  The equalities of lengths and Hilbert--Kunz multiplicities follow.
\end{proof}

\begin{theorem}\label{thm:ordinary-HK-transcendental}
Let $k$ be an uncountable algebraically closed field of characteristic $p>2$.  There exists a normal standard graded $k$-domain $S$ and an $S_+$-primary homogeneous ideal $I\subseteq S$ such that
$e_{\rm HK}(IS_{S_+})$
is transcendental.
\end{theorem}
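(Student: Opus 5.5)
The plan is to assemble the pieces already established.

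Fix $N$ large enough that Lemma~\ref{lem:LN-normal-standard} and Theorem~\ref{thm:enriques-gHK-criterion} both apply. Put $S=S_N=R(X,L_N)$, $J=J_N$, $R=S_{S_+}$, $\mathfrak m=S_+R$. Then $S$ is a normal standard graded $k$-domain, and $e_{\rm gHK}(JR)$ exists and is transcendental.

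Next, apply Proposition~\ref{prop:vraciu-reduction-enriques}. It produces fixed $\mathfrak m$-primary ideals $I_{1,R},\ldots,I_{t,R}\subseteq R$, each generated by images of homogeneous elements of $S$, and integers $a_1,\ldots,a_t$ independent of $Q$, such that for every $Q=p^e$
$$
\lambda_R\bigl(H^0_{\mathfrak m}(R/J^{[Q]}R)\bigr)=\sum_{i=1}^t a_i\lambda_R(R/I_{i,R}^{[Q]}).
$$
For each $i$, let $I_i\subseteq S$ be the ideal generated by the corresponding homogeneous elements. By Lemma~\ref{lem:homogeneous-contraction-HK}, each $I_i$ is $S_+$-primary and homogeneous, and $\lambda_R(R/I_{i,R}^{[Q]})=\lambda_S(S/I_i^{[Q]})$.

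Now divide the identity by $Q^5$ and let $Q\to\infty$. Monsky's theorem gives convergence of each right-hand term to $e_{\rm HK}(I_iS_{S_+})$. The left side converges to $e_{\rm gHK}(JR)$ by Theorem~\ref{thm:enriques-gHK-criterion}; in any case the identity itself forces existence of the limit. Hence
$$
e_{\rm gHK}(JR)=\sum_i a_i\,e_{\rm HK}(I_iS_{S_+}).
$$
If every $e_{\rm HK}(I_iS_{S_+})$ were algebraic, the right side would be algebraic, since $\overline{\QQ}$ is a field. That would contradict transcendence of the left side. So some $I=I_i$ has transcendental $e_{\rm HK}(IS_{S_+})$.

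The only point needing care is that the hypotheses of Proposition~\ref{prop:vraciu-reduction-enriques} hold for this fixed $N$. These are:
\begin{itemize}
\item $F$-finiteness;
\item the LC bounds of Lemma~\ref{lem:fixed-homogeneous-LC}, which use smoothness of $X$;
\item an uncountable residue field, needed for the prime avoidance in Lemma~\ref{lem:homogeneous-countable-prime-avoidance}.
\end{itemize}
All were verified at the start of this section. Beyond this, I expect no real obstacle: the theorem is a bookkeeping consequence of the preceding results.
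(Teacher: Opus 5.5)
Your proposal is correct and follows the paper's proof essentially step for step: fix $N$, take transcendence of $e_{\rm gHK}(JR)$ from Theorem~\ref{thm:enriques-gHK-criterion}, get the $Q$-independent integer combination from Proposition~\ref{prop:vraciu-reduction-enriques}, divide by $Q^{\dim R}=Q^5$ and pass to the limit using Monsky's theorem, and lift the offending ideal to an $S_+$-primary homogeneous ideal via Lemma~\ref{lem:homogeneous-contraction-HK}. The only difference is cosmetic: you apply the contraction lemma before taking the limit rather than after, which changes nothing.
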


\begin{proof}
Keep the notation fixed at the beginning of this section.  By Theorem~\ref{thm:enriques-gHK-criterion}, $e_{\rm gHK}(JR)$ exists and is transcendental.  Proposition~\ref{prop:vraciu-reduction-enriques} gives fixed $\mathfrak m$-primary ideals $I_1,\ldots,I_t\subseteq R$ and integers $a_1,\ldots,a_t$ such that
$$
\lambda_R\bigl(H^0_{\mathfrak m}(R/J^{[Q]}R)\bigr)
=
\sum_{i=1}^t a_i\lambda_R(R/I_i^{[Q]})
$$
for every $Q=p^e$.  Put $d=\dim R$.  Dividing by $Q^d$ and applying Monsky's theorem \cite{Monsky83} gives
$$
        e_{\rm gHK}(JR)=\sum_{i=1}^t a_i e_{\rm HK}(I_i).
$$
Since the left side is transcendental, there is an index $i$ with $a_i\neq0$ for which $e_{\rm HK}(I_i)$ is transcendental.  By Proposition~\ref{prop:vraciu-reduction-enriques}, $I_i$ is generated by images of homogeneous elements of $S$.  Let $I\subseteq S$ be generated by homogeneous lifts.  Lemma~\ref{lem:homogeneous-contraction-HK} shows that $I$ is $S_+$-primary and homogeneous and that $e_{\rm HK}(IS_{S_+})=e_{\rm HK}(I_i)$.  This proves the theorem.
\end{proof}

\begin{remark}\label{rem:conclusion}
The construction shows that an ordinary Hilbert--Kunz multiplicity can retain arithmetic information originating in the divisor volume function of a projective model.  It is natural to ask which other features of divisor geometry survive this passage through Frobenius, and how large the arithmetic complexity of Hilbert--Kunz multiplicities can be.
\end{remark}

\appendix

\section{Exact Evaluation of the Logarithmic Contributions}\label{app:exact-log-computations}

This appendix evaluates the two logarithmic terms used in Section~\ref{sec:enriques-gHK}.  Throughout, $\eta=4$ is the intersection number from Lemma~\ref{lem:enriques-pencil-package}; we retain the symbol in a few intermediate formulas to indicate its geometric origin.  The rational identities below can be checked directly by differentiation and substitution.

\subsection{The lower degree integral}\label{app:lower-log}

We retain the notation of Proposition~\ref{prop:exact-lower-log}.  Put
$A_N=N+\frac{1}{3}, \qquad d_N=\sqrt6 A_N, \qquad \rho=\frac{1}{\sqrt6}$.
In the centered simplex coordinates
$$
        x_i=\mu_i-\frac{1}{3},
        \qquad
        R^2=x_0^2+x_1^2+x_2^2,
        \qquad
        x_0+x_1+x_2=0,
$$
and for $t\in[\tau_N,1]$, one has $A_Nt-N\geq0$.  Hence the condition that the corresponding class on $Y$ lie in $\mathcal C^+$ is equivalent to
$6(A_Nt-N)^2-t^2R^2\geq0$.
Therefore, on this interval,
$$
\vol_X(tL_N-D_N)
=
12\eta t^2\int_{\Delta_1}
\max\{6(A_Nt-N)^2-t^2R^2,0\}\,d\mu_1d\mu_2.
$$
For fixed $R$, positivity begins at
$t_0(R)=\frac{\sqrt6 N}{d_N-R}$.
The centered triangle has circumradius $2\rho$.  Since
$$
        t_0(0)=\tau_N,
        \qquad
        d_N-2\rho=\sqrt6N,
        \qquad
        t_0(2\rho)=1,
$$
one has $\tau_N\leq t_0(R)\leq1$ throughout the complete radial region; no complementary radial chamber is omitted.  The Euclidean area form in the plane $x_0+x_1+x_2=0$ is
$dA=\sqrt3\,d\mu_1d\mu_2$,
and the centered simplex is an equilateral triangle of inradius $\rho$.  It is the union, up to boundary rays, of three congruent sectors
$$
        -\frac{\pi}{3}\leq\theta\leq\frac{\pi}{3},
        \qquad
        0\leq R\leq\rho\sec\theta.
$$
The normalization is confirmed by
$$
3\int_{-\pi/3}^{\pi/3}\int_0^{\rho\sec\theta}R\,dR\,d\theta
=\frac{\sqrt3}{2},
$$
the Euclidean area of the triangle; its coordinate area is $1/2$ by $dA=\sqrt3\,d\mu_1d\mu_2$.  Consequently,
\begin{equation}\label{eq:app-lower-sector}
\mathcal L_N
=
\frac{3\eta}{2\sqrt3}
\int_{-\pi/3}^{\pi/3}
\int_0^{\rho\sec\theta}
G_N(R)\,dR\,d\theta,
\end{equation}
where
$$
\begin{aligned}
F_N(t,R)
&=
\frac{6A_N^2-R^2}{5}t^5
-3A_NNt^4+2N^2t^3,\\
G_N(R)
&=
R\bigl(F_N(1,R)-F_N(t_0(R),R)\bigr).
\end{aligned}
$$

\begin{proposition}\label{app:lower-log-computation}
The rational function $G_N$ admits the decomposition
\begin{equation}\label{eq:app-GN-decomposition}
G_N(R)=-\frac{R^3}{5}+\left(\frac{N^2}{5}-\frac{N}{5}+\frac{2}{15}\right)R-\frac{12N^5R(3N+1-2\sqrt6R)}{5(d_N-R)^4}.
\end{equation}
and the rational function
\begin{equation}\label{eq:app-HN}
\begin{aligned}
H_N(R)
={}&-\frac{R^4}{20}
+\left(\frac{N^2}{10}-\frac{N}{10}+\frac{1}{15}\right)R^2 +\frac{2N^5}{5(d_N-R)^3}
\Bigl(
12\sqrt6(d_N-R)^2\\
&\hspace{10em}-21(3N+1)(d_N-R)
+2\sqrt6(3N+1)^2
\Bigr)
\end{aligned}
\end{equation}
satisfies $H_N'(R)=G_N(R)$.  After the substitution $z=\tan(\theta/2)$, the only nonzero residues that contribute logarithms to \eqref{eq:app-lower-sector} occur at $z=\pm\gamma_N$, and they give precisely $b_N\log\alpha_N$.
\end{proposition}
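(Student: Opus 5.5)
The statement has three parts: the partial-fraction form \eqref{eq:app-GN-decomposition} of $G_N$, the primitive \eqref{eq:app-HN}, and the residue evaluation of the angular integral. I will prove them in that order.

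\textbf{Step 1: the decomposition of $G_N$.} Start from the definition $G_N(R)=R\bigl(F_N(1,R)-F_N(t_0(R),R)\bigr)$ with $t_0(R)=\sqrt6N/(d_N-R)$. Compute $F_N(1,R)$ directly; it is a polynomial in $R$ of degree two. For the second term, use $6A_N^2-R^2=(d_N-R)(d_N+R)$. Then
$$F_N(t_0,R)=\frac{(\sqrt6N)^3}{(d_N-R)^3}\Bigl(\frac{6N^2(d_N+R)}{5(d_N-R)}-3A_NN\frac{\sqrt6N}{d_N-R}\cdot\frac{d_N-R}{d_N-R}+2N^2\Bigr),$$
which is a rational function whose only pole is at $R=d_N$, of order at most four. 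Substitute $R=d_N-w$, expand in powers of $w$, and collect terms. This should reproduce \eqref{eq:app-GN-decomposition}, using $d_N=\sqrt6N+\sqrt6/3$ and $3N+1=\sqrt6\,d_N/2$ to simplify the coefficients. As a sanity check, $G_N(0)=0$, and $t_0(2\rho)=1$ forces the correct behavior at $R=2\rho$.

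\textbf{Step 2: the primitive $H_N$.} Differentiate \eqref{eq:app-HN} term by term. The polynomial part differentiates to the polynomial part of $G_N$. For the pole part, write $w=d_N-R$, so that $d/dR=-d/dw$. Differentiating $\frac{2N^5}{5}\bigl(12\sqrt6 w^{-1}-21(3N+1)w^{-2}+2\sqrt6(3N+1)^2w^{-3}\bigr)$ and rewriting $R(3N+1-2\sqrt6R)$ in powers of $w$, I expect the two sides to match. This step is routine algebra.

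\textbf{Step 3: reduction to a one-variable integral and its residues.} Integrating in $R$ turns \eqref{eq:app-lower-sector} into
$$\frac{3\eta}{2\sqrt3}\int_{-\pi/3}^{\pi/3}\bigl(H_N(\rho\sec\theta)-H_N(0)\bigr)\,d\theta.$$
The term $H_N(0)$ contributes an algebraic multiple of $\pi$. This is not a logarithm of a positive algebraic number, so I must check that all multiples of $\pi$ cancel or are absorbed. The cleanest way is to verify that the $\pi$-terms sum to an algebraic number times $\pi$, and then note that $\pi=-i\log(-1)$ is not of the allowed real form. I therefore expect to show that the $\pi$-coefficient vanishes. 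This is the point I would check first.

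Next I handle $H_N(\rho\sec\theta)$. The polynomial part contributes integrals of $\sec^{2}\theta$ and $\sec^{4}\theta$ over $[-\pi/3,\pi/3]$; these are polynomials in $\tan\theta$, hence algebraic. For the pole terms, $d_N-\rho\sec\theta=(d_N\cos\theta-\rho)/\cos\theta$. Substituting $z=\tan(\theta/2)$, with $d\theta=2\,dz/(1+z^2)$ and $\cos\theta=(1-z^2)/(1+z^2)$, gives rational integrands in $z$ over $[-1/\sqrt3,1/\sqrt3]$. Their denominators are powers of $d_N(1-z^2)-\rho(1+z^2)$, whose roots satisfy $z^2=(d_N-\rho)/(d_N+\rho)$. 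Since $(d_N-\rho)/(d_N+\rho)=(6N+1)/(6N+3)$, the roots are $z=\pm\gamma_N$, and $\gamma_N>1/\sqrt3$, so the roots lie outside the interval of integration. The factors $1+z^2$ cancel against powers coming from $\cos\theta$, so they produce no arctangent; this is exactly the $\pi$-cancellation I want to confirm. The simple-pole residues at $\pm\gamma_N$ integrate to
$$c\log\frac{\gamma_N+1/\sqrt3}{\gamma_N-1/\sqrt3}=c\log\alpha_N.$$
Computing $c$ via the residue formula then gives $b_N$.

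\textbf{Main obstacle.} I expect the hardest part to be this final residue computation: extracting the simple-pole coefficient from the pole of order three in $1/(d_N-R)^3$, and proving that no arctangent/$\pi$ terms survive. For the residue, I would write the integrand as $P(z)/\bigl((1+z^2)^m(\gamma_N^2-z^2)^3\bigr)$ and take the Laurent expansion at $z=\gamma_N$ symbolically, using the symmetry $z\mapsto -z$ to pair the two residues. Finally, I would check $\alpha_N>1$ and $b_N>0$ directly: $\gamma_N>1/\sqrt3$ gives $3\gamma_N^2-1>0$, and $\gamma_N<1$ gives $1-\gamma_N^2>0$.
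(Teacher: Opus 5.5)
Your Steps 1 and 2 are the paper's own verification and are correct. In fact $F_N(t_0(R),R)$ collapses at once to $\frac{6\sqrt6N^5(d_N-4R)}{5(d_N-R)^4}$, using $d_N^2=6A_N^2$ and $\sqrt6\,d_N=2(3N+1)$. Handling the polynomial part of $H_N$ through the primitives of $\sec^2\theta$ and $\sec^4\theta$ is also a clean reason why $z=\pm1$ contribute no logarithm; the paper only records that those residues vanish.

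The gap is in your treatment of $\pi$. You split off the constant $-H_N(0)$, which contributes $-\frac{2\pi}{3}H_N(0)$ up to the common factor $\frac{3\eta}{2\sqrt3}$, and $H_N(0)=\frac{27N^5}{5(3N+1)}\neq0$. You then assert that the pole terms of $H_N(\rho\sec\theta)$ produce no arctangent because the factors $1+z^2$ cancel. That is false. With $w=d_N-R$ one gets $w^{-k}=(1-z^2)^k/\bigl(d_N(1-z^2)-\rho(1+z^2)\bigr)^k$. Here the powers of $1+z^2$ coming from $\cos\theta$ cancel each other, but the factor $2/(1+z^2)$ from $d\theta$ survives. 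So the integrand of $\int w^{-k}\,d\theta$ has simple poles at $z=\pm i$, and its partial fraction expansion contains $2d_N^{-k}/(1+z^2)$. Each pole term therefore contributes $\frac{2\pi}{3}d_N^{-k}$ times its coefficient. Followed literally, your argument would leave a nonzero multiple of $\pi$ and contradict the statement.

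The missing observation is the one the paper uses: keep $H_N(R)-H_N(0)$ together. It vanishes at $R=0$, hence equals $R$ times a rational function regular at $R=0$. The substitution $R=\rho(1+z^2)/(1-z^2)$ then supplies a factor $1+z^2$ that cancels the one from $d\theta$. Thus $\Phi_N$ has no poles at $z=\pm i$, and no arctangent or $\pi$ can occur.

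Equivalently, in your split, $R(\pm i)=0$ gives $\operatorname{Res}_{z=\pm i}\frac{2}{1+z^2}H_N(R(z))=\mp iH_N(0)$. So the arctangent terms of the pole part sum to exactly $\frac{2\pi}{3}H_N(0)$ and cancel the constant you separated.

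Once this is repaired, your route coincides with the paper's. Since $\Phi_N$ is even in $z$, the residues at $\pm\gamma_N$ are opposite, and the two simple poles give $-2\operatorname{Res}_{z=\gamma_N}\Phi_N(z)\,dz\cdot\log\alpha_N$; the higher-order principal parts have rational primitives. You defer the explicit residue value to a symbolic Laurent expansion, but the paper likewise only states it as \eqref{eq:app-lower-residue}, so on that point your proposal is at the same level of detail.
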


\begin{proof}
Substitution of $t_0(R)=\sqrt6N/(d_N-R)$ into $F_N$ and collection over the denominator $(d_N-R)^4$ gives \eqref{eq:app-GN-decomposition}.  Differentiating \eqref{eq:app-HN} and using
$d_N=\frac{\sqrt6}{3}(3N+1)$
gives \eqref{eq:app-GN-decomposition} term by term, and hence $H_N'=G_N$.  Notice also that the numerator of the polar term in \eqref{eq:app-GN-decomposition} has degree two in $R$; therefore the Laurent expansion at $R=d_N$ has no $(R-d_N)^{-1}$ term.

Now set $z=\tan(\theta/2)$.  Then
$$
        \sec\theta=\frac{1+z^2}{1-z^2},
        \qquad
        d\theta=\frac{2\,dz}{1+z^2},
$$
and \eqref{eq:app-lower-sector} becomes
$$
        \mathcal L_N
        =\int_{-1/\sqrt3}^{1/\sqrt3}\Phi_N(z)\,dz,
$$
where
\begin{equation}\label{eq:app-PhiN}
\Phi_N(z)
=
\frac{3\eta}{\sqrt3(1+z^2)}
\left[
H_N\left(\rho\frac{1+z^2}{1-z^2}\right)-H_N(0)
\right].
\end{equation}
$\Phi_N$ is a rational function of $z$.  The subtraction $H_N(R)-H_N(0)$ vanishes at $R=0$, so it is divisible by $R$ as a rational function.  Under
$R=\rho\frac{1+z^2}{1-z^2}$,
this factor supplies $1+z^2$ and cancels the denominator $1+z^2$ coming from $d\theta$.  Thus no poles at $z=\pm i$ remain.  After complete cancellation, the reduced denominator of $\Phi_N$ is
$$
180(3N+1)(z-1)^4(z+1)^4
\bigl((6N+3)z^2-(6N+1)\bigr)^3.
$$
Hence the only possible poles are $z=\pm1$ and $z=\pm\gamma_N$; there are no additional complex poles, and in particular no arctangent or $\pi$ term can occur.
For a pole $z_0$ of order at most $m$, the residue can be checked directly from
$$
\operatorname*{Res}_{z=z_0}\Phi_N(z)\,dz
=
\frac{1}{(m-1)!}
\left.
\frac{d^{m-1}}{dz^{m-1}}
\bigl((z-z_0)^m\Phi_N(z)\bigr)
\right|_{z=z_0}.
$$
Applying this formula to \eqref{eq:app-PhiN}, with the fixed value $\eta=4$, gives
$$
        \operatorname*{Res}_{z=1}\Phi_N(z)\,dz
        =
        \operatorname*{Res}_{z=-1}\Phi_N(z)\,dz
        =0.
$$
With
$\gamma_N=\sqrt{\frac{6N+1}{6N+3}}$,
one obtains
\begin{equation}\label{eq:app-lower-residue}
\operatorname*{Res}_{z=\gamma_N}\Phi_N(z)\,dz
=
\frac{\sqrt3(\gamma_N-1)(\gamma_N+1)(3\gamma_N^2-1)^5}
{11520\gamma_N^5(\gamma_N^2+1)},
\end{equation}
and the residue at $-\gamma_N$ is the negative of \eqref{eq:app-lower-residue}.  Since
$\frac{1}{\sqrt3}<\gamma_N<1$,
the integration interval contains no pole.  Every higher order principal part has a rational primitive and therefore gives an algebraic value at the algebraic endpoints.  The terms from the two simple poles contribute
$$
-2\operatorname*{Res}_{z=\gamma_N}\Phi_N(z)\,dz
\cdot
\log\left(
\frac{\gamma_N+1/\sqrt3}{\gamma_N-1/\sqrt3}
\right).
$$
By \eqref{eq:app-lower-residue}, this is
$$
        \frac{\sqrt3(1-\gamma_N^2)(3\gamma_N^2-1)^5}
        {5760\gamma_N^5(\gamma_N^2+1)}
        \log\left(
        \frac{\gamma_N+1/\sqrt3}{\gamma_N-1/\sqrt3}
        \right)
        =b_N\log\alpha_N.
$$
All remaining endpoint values are algebraic, which proves the claimed decomposition into an algebraic term and logarithms.
\end{proof}

For the first term as $N\to\infty$, set $x=1/N$.  Then
$$
\gamma_N=1-\frac{x}{6}+\frac{5x^2}{72}+O(x^3),
\qquad
b_N=\frac{\sqrt3}{1080}x-\frac{\sqrt3}{540}x^2+O(x^3),
$$
and
$\log\alpha_N=\log(2+\sqrt3)+\frac{\sqrt3}{6}x+O(x^2)$.
Hence
\begin{equation}\label{eq:app-lower-asymptotic}
        b_N\log\alpha_N
        =\frac{\sqrt3}{1080N}\log(2+\sqrt3)+O(N^{-2}).
\end{equation}

\subsection{The upper boundary integral}\label{app:upper-log}

Let
$$
\Delta
=
\{(u,w)\in\RR^2\mid u\geq0,\ w\geq0,\ u+w\leq1\}.
$$
Put $\epsilon=1/N$ and set
$$
A_\epsilon(u,w)
=
3+2\epsilon-\epsilon^2(u^2+uw-u+w^2-w),
\qquad
B_\epsilon(u)=2+\epsilon(1-u).
$$
For
$$
I_N
=
\int_\Delta
\frac{B_\epsilon(u)^5}{A_\epsilon(u,w)^4}\,du\,dw,
$$
the next proposition computes the logarithmic part explicitly.

\begin{proposition}\label{app:upper-log-computation}
Define
$S_\epsilon(u,w) = \frac{(1+\epsilon w)^5}{3\epsilon(\epsilon+3)}$
and
$$
R_\epsilon(u,w)
=
-\frac{(\epsilon u-\epsilon-2)P_\epsilon(u,w)}
{6\epsilon(\epsilon+3)},
$$
where
\begin{align*}
P_\epsilon(u,w)
={}&\epsilon^4\bigl(
3u^3+u^2w-8u^2+3uw^2-2uw+7u\\
&\hspace{7em}-2w^4-3w^2+w-2\bigr)\\
&+\epsilon^3\bigl(
9u^3+3u^2w-38u^2+9uw^2-4uw+45u\\
&\hspace{7em}-8w^3-15w^2+w-16\bigr)\\
&+\epsilon^2\bigl(
-42u^2+6uw+95u-30w^2-14w-49\bigr)\\
&+\epsilon(69u-32w-71)-44.
\end{align*}
Then
\begin{equation}\label{eq:app-divergence}
\frac{B_\epsilon(u)^5}{A_\epsilon(u,w)^4}
=
\frac{\partial}{\partial u}
\left(\frac{R_\epsilon(u,w)}{A_\epsilon(u,w)^3}\right)
+
\frac{\partial}{\partial w}
\left(\frac{S_\epsilon(u,w)}{A_\epsilon(u,w)^3}\right).
\end{equation}
The boundary integral obtained from \eqref{eq:app-divergence} reduces to
$$
        I_N=\int_0^1\frac{G_\epsilon(t)}{h_\epsilon(t)^3}\,dt,
$$
where
$h_\epsilon(t)=3+2\epsilon+\epsilon^2t(1-t)$
and
\begin{align*}
G_\epsilon(t)
=-\frac{1}{6(\epsilon+3)}\bigl[&
\epsilon^4(3t^4-19t^3+24t^2-16t+2)\\
&+\epsilon^3(13t^4-105t^3+162t^2-140t+20)\\
&+\epsilon^2(-120t^3+309t^2-426t+68)\\
&+\epsilon(177t^2-574t+100)-300t+54
\bigr].
\end{align*}
Its logarithmic part is
\begin{equation}\label{eq:app-upper-log-exact}
-\frac{4D_\epsilon
(\epsilon^5+10\epsilon^4+40\epsilon^3+80\epsilon^2+80\epsilon+30)}
{\epsilon^2(\epsilon+2)^3(\epsilon+3)(\epsilon+6)^3}
\log\left(\frac{D_\epsilon-\epsilon}{D_\epsilon+\epsilon}\right),
\end{equation}
where $D_\epsilon=\sqrt{\epsilon^2+8\epsilon+12}$.
\end{proposition}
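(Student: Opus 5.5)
The proof has three stages: verify the divergence identity \eqref{eq:app-divergence}, reduce $I_N$ to a one-variable integral by Green's theorem, and extract the logarithmic part by partial fractions. Throughout I treat $\epsilon$ as a fixed positive rational, so every coefficient below lies in $\QQ(\epsilon)$ or a quadratic extension of it.

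\emph{Step 1: the divergence identity.} Write $A=A_\epsilon$. Then
$$\partial_u(R/A^3)=(R_uA-3RA_u)/A^4,\qquad \partial_w(S/A^3)=(S_wA-3SA_w)/A^4 .$$
So \eqref{eq:app-divergence} is equivalent to the polynomial identity
$$R_uA-3RA_u+S_wA-3SA_w=B_\epsilon(u)^5$$
in $\QQ(\epsilon)[u,w]$. I would check it coefficient by coefficient in $\epsilon$, using $A_u=-\epsilon^2(2u+w-1)$ and $A_w=-\epsilon^2(u+2w-1)$. This is routine but long. One simplification is that $S$ depends only on $w$, so the $S$-terms are easy, and the $u$-dependence of $R$ enters through the factor $(\epsilon u-\epsilon-2)=-B_\epsilon$.

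\emph{Step 2: Green's theorem.} Integrate over $\Delta$. The edge $u=0$ (with $w$ running from $1$ to $0$) contributes $-\int R/A^3$ with the sign fixed by orientation. The edge $w=0$ contributes the $S$-flux. On the hypotenuse $u=1-t$, $w=t$ the two fluxes combine as $(R+S)/A^3$.

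On each edge $A$ restricts to $h_\epsilon(t)=3+2\epsilon+\epsilon^2t(1-t)$. Indeed $u^2+uw-u+w^2-w$ equals $t^2-t$ on $u=0$, on $w=0$, and on $u+w=1$ (where it equals $w^2-w$). Hence all three edge integrals have the common denominator $h_\epsilon(t)^3$ after parametrizing each edge by $t\in[0,1]$. Summing the numerators gives the stated $G_\epsilon$; this is again a direct polynomial check.

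\emph{Step 3: the logarithmic part.} Since $\deg G_\epsilon\le 4<6=\deg h_\epsilon^3$, the partial fraction decomposition of $G_\epsilon/h_\epsilon^3$ has no polynomial part. The roots of $h_\epsilon$ are
$$t_\pm=\tfrac12\pm\tfrac{D_\epsilon}{2\epsilon},\qquad \epsilon^2t^2-\epsilon^2t-(3+2\epsilon)=0,\qquad \text{discriminant } \epsilon^4+4\epsilon^2(3+2\epsilon)=\epsilon^2D_\epsilon^2 .$$
These roots are real and lie outside $[0,1]$, so the integrand has no pole on the interval.

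The higher-order principal parts integrate to rational functions evaluated at the algebraic endpoints $0,1$, and so contribute only algebraic numbers. The simple-pole residues are $\pm c$, by the symmetry $t\mapsto 1-t$ of $h_\epsilon$. They therefore contribute
$$c\,\log\frac{(1-t_+)\,t_-}{(-t_+)(1-t_-)}=2c\log\frac{D_\epsilon-\epsilon}{D_\epsilon+\epsilon}$$
after simplification, since $1-t_+=-t_-$.

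Finally, compute $c=\operatorname{Res}_{t=t_+}G_\epsilon/h_\epsilon^3$ via the second-derivative formula for a triple pole, or equivalently by Hermite reduction: write $G_\epsilon/h_\epsilon^3=(P/h_\epsilon^2)'+q/h_\epsilon$ and solve the linear system for $P$ of degree $\le3$ and $q$ a constant. The constant $q$ then gives the coefficient of $\int dt/h_\epsilon$, namely
$$\int\frac{dt}{h_\epsilon}=\frac{1}{\epsilon D_\epsilon}\log\frac{t-t_+}{t-t_-}.$$
Hermite reduction is the cleaner route, since only the single constant $q$ matters. This step is the main obstacle: it is a sizable symbolic computation that must collapse to the stated coefficient, with the factor $\epsilon^5+10\epsilon^4+\cdots+30$ and denominator $\epsilon^2(\epsilon+2)^3(\epsilon+3)(\epsilon+6)^3$. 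The expected appearance of $D_\epsilon$ in the numerator comes from $1/(\epsilon D_\epsilon)$ combined with a $D_\epsilon^2$ factor in $q$, since $D_\epsilon^2=(\epsilon+2)(\epsilon+6)$.

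I would check the result by its asymptotics: it should give $\Lambda(I_N)\sim 5/(108\epsilon)$, consistent with Lemma~\ref{lem:critical-integral-arithmetic}.
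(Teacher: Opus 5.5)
Your proposal follows the paper's proof essentially step for step: the same polynomial form of the divergence identity, Green's theorem on $\Delta$ with $A_\epsilon$ restricting to $h_\epsilon$ on every edge, and the logarithm read off from the simple-pole coefficients at the roots of $h_\epsilon$. It works after three minor corrections: with your parametrization $u=1-t$, $w=t$ of the hypotenuse (and $w=1-t$ on $u=0$), the edge numerators sum to $G_\epsilon(1-t)$ rather than $G_\epsilon(t)$, which is harmless since $h_\epsilon(1-t)=h_\epsilon(t)$ (this is why the paper symmetrizes); $c_-=-c_+$follows from $\deg G_\epsilon\le 4=\deg h_\epsilon^3-2$ (equivalently, your Hermite remainder $q$ is constant), not from the symmetry of $h_\epsilon$ alone; and $\int dt/h_\epsilon=-\frac{1}{\epsilon D_\epsilon}\log\left|\frac{t-t_+}{t-t_-}\right|$, with a minus sign, because $h_\epsilon$ has leading coefficient $-\epsilon^2$.
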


\begin{proof}
The derivatives of the denominator are
$$
\frac{\partial A_\epsilon}{\partial u}
=-\epsilon^2(2u+w-1),
\qquad
\frac{\partial A_\epsilon}{\partial w}
=-\epsilon^2(u+2w-1),
$$
while $\partial B_\epsilon/\partial u=-\epsilon$.  After putting the right side of \eqref{eq:app-divergence} over the common denominator $A_\epsilon^4$, the identity is equivalent to the polynomial identity
\begin{equation}\label{eq:app-polynomial-identity}
A_\epsilon
\left(
\frac{\partial R_\epsilon}{\partial u}
+
\frac{\partial S_\epsilon}{\partial w}
\right)
-3\left(
R_\epsilon\frac{\partial A_\epsilon}{\partial u}
+
S_\epsilon\frac{\partial A_\epsilon}{\partial w}
\right)
-B_\epsilon^5=0.
\end{equation}
After multiplying by $6\epsilon(\epsilon+3)$, the left side of \eqref{eq:app-polynomial-identity} is a polynomial in $u,w,\epsilon$.  Substitution of the displayed polynomial $P_\epsilon$ makes every coefficient of every monomial $u^iw^j$ vanish identically in $\epsilon$.  Thus \eqref{eq:app-polynomial-identity} is a direct finite verification of \eqref{eq:app-divergence}.

On the closed triangle $\Delta$, put $z=1-u-w$.  Then
$A_\epsilon(u,w)=3+2\epsilon+\epsilon^2(uw+uz+wz)>0$.
Hence the vector field in \eqref{eq:app-divergence} is regular on a neighborhood of $\Delta$.  Orient the boundary of
$\Delta=\{(u,w)\in\RR^2:u\geq0,\ w\geq0,\ u+w\leq1\}$
counterclockwise.  Green's theorem gives
$I_N=\int_0^1\Psi_\epsilon(t)\,dt$,
where
$$\Psi_\epsilon(t)=-\frac{S_\epsilon(t,0)}{A_\epsilon(t,0)^3}+\frac{R_\epsilon(1-t,t)+S_\epsilon(1-t,t)}{A_\epsilon(1-t,t)^3}-\frac{R_\epsilon(0,1-t)}{A_\epsilon(0,1-t)^3}.$$
Substitution and clearing denominators gives the rational identity
\begin{equation}\label{eq:app-symmetrization}
\Psi_\epsilon(t)+\Psi_\epsilon(1-t)
=
\frac{G_\epsilon(t)+G_\epsilon(1-t)}{h_\epsilon(t)^3},
\end{equation}
because $h_\epsilon(1-t)=h_\epsilon(t)$.  Integrating \eqref{eq:app-symmetrization} over $[0,1]$ and applying $t\mapsto1-t$ to one half of each side yields
$$
        I_N=\int_0^1\frac{G_\epsilon(t)}{h_\epsilon(t)^3}\,dt.
$$

The roots of $h_\epsilon$ are
$\rho_\pm=\frac{\epsilon\pm D_\epsilon}{2\epsilon}$.
They are both outside $[0,1]$, and each is a pole of order at most three.  The coefficient of $(t-\rho_\pm)^{-1}$ is given without any ambiguity from a partial fraction decomposition by
$$
        c_\pm
        =
        \frac{1}{2}
        \left.
        \frac{d^2}{dt^2}
        \left(
        (t-\rho_\pm)^3
        \frac{G_\epsilon(t)}{h_\epsilon(t)^3}
        \right)
        \right|_{t=\rho_\pm}.
$$
Substitution gives
$$
\begin{aligned}
c_+
&=
-\frac{2D_\epsilon
(\epsilon^5+10\epsilon^4+40\epsilon^3+80\epsilon^2+80\epsilon+30)}
{\epsilon^2(\epsilon+2)^3(\epsilon+3)(\epsilon+6)^3},\\
c_-&=-c_+.
\end{aligned}
$$
The terms of order two and three have rational primitives and give algebraic endpoint values.  Hence the logarithmic part is
\begin{align*}
\Lambda(I_N)
&=c_+\log\left(1-\frac{1}{\rho_+}\right)
+c_-\log\left(1-\frac{1}{\rho_-}\right)\\
&=2c_+\log\left(\frac{D_\epsilon-\epsilon}{D_\epsilon+\epsilon}\right),
\end{align*}
which is \eqref{eq:app-upper-log-exact}.
\end{proof}

Since $D_\epsilon>\epsilon>0$, we can write the expression in \eqref{eq:app-upper-log-exact} as $-P_\epsilon\log r_\epsilon$, with $P_\epsilon>0$ and $0<r_\epsilon<1$.  Hence $\Lambda(I_N)>0$.  Expanding at $\epsilon=0$ gives
\begin{equation}\label{eq:app-upper-asymptotic}
        \Lambda(I_N)
        =\frac{5}{108\epsilon}+\frac{5}{324}-\frac{5\epsilon}{324}+O(\epsilon^2).
\end{equation}
In particular, since $\eta=4$ and $\epsilon=1/N$,
$$
        \frac{3\eta}{20N^3}\Lambda(I_N)
        =\frac{1}{36N^2}+\frac{1}{108N^3}+O(N^{-4}).
$$

\section*{Acknowledgments}
The first author acknowledges the hospitality of the Center of Mathematical Sciences and Applications at Harvard University, where bulk of this work was carried out. The second author thanks the Center of Mathematical Sciences and Applications at Harvard University and the Hebrew University of Jerusalem for their support during this project.  He was partially supported by Israel Science Foundation grant ISF 687/24.

\end{document}